\newcommand{\dd}{\text{d}}
\def\eps{\varepsilon}
\def\E{\mathbb{E}}
\def\P{\mathbb{P}}
\def\R{\mathbb{R}}
\def\dist{\textup{d}}
\def\T{\mathcal{T}}
\def\D{\prescript{}{0}D_{t}^{1-\alpha}}
\def\L{\mathcal{L}_{\alpha}}
\def\O{\mathcal{O}}
\newcommand{\drie}{L_{\textup{rie}}}
\newtheorem{theorem}{Theorem}
\newtheorem{lemma}[theorem]{Lemma}
\newtheorem{corollary}[theorem]{Corollary}
\theoremstyle{plain}
\theoremstyle{remark}
\newtheorem{remark}[theorem]{Remark}
\theoremstyle{definition}
\newtheorem*{definition*}{Definition}
\begin{document}


\title{Extreme statistics of anomalous subdiffusion following a fractional Fokker-Planck equation: Subdiffusion is faster than normal diffusion}


\author{Sean D. Lawley\thanks{Department of Mathematics, University of Utah, Salt Lake City, UT 84112 USA (\texttt{lawley@math.utah.edu}).}
}
\date{\today}
\maketitle

\begin{abstract}
Anomalous subdiffusion characterizes transport in diverse physical systems and is especially prevalent inside biological cells. In cell biology, the prevailing model for chemical activation rates has recently changed from the first passage time (FPT) of a single searcher to the FPT of the fastest searcher out of many searchers to reach a target, which is called an extreme statistic or extreme FPT. In this paper, we investigate extreme statistics of searchers which move by anomalous subdiffusion. We model subdiffusion by a fractional Fokker-Planck equation involving the Riemann-Liouville fractional derivative. We prove an explicit and very general formula for every moment of subdiffusive extreme FPTs and approximate their full probability distribution. While the mean FPT of a single subdiffusive searcher is infinite, the fastest subdiffusive searcher out of many subdiffusive searchers typically has a finite mean FPT. In fact, we prove the counterintuitive result that extreme FPTs of subdiffusion are faster than extreme FPTs of normal diffusion. Mathematically, we employ a stochastic representation involving a random time change of a standard Ito drift-diffusion according to the trajectory of the first crossing time inverse of a Levy subordinator. A key step in our analysis is generalizing Varadhan's formula from large deviation theory to the case of subdiffusion, which yields the short-time distribution of subdiffusion in terms of a certain geodesic distance.
\end{abstract}

\section{Introduction}

Many complex systems are characterized by anomalous subdiffusion \cite{oliveira2019, klafter2005, hofling2013, barkai2012, sokolov2012, meroz2015}. The hallmark of subdiffusion is that the mean-squared displacement of a subdiffusive particle grows sublinearly in time. More precisely, if $X_{\alpha}(t)$ denotes the one-dimensional position of a subdiffusive particle at time $t\ge0$, then
\begin{align}\label{msd}
\E\big[\big(X_{\alpha}(t)-X_{\alpha}(0)\big)^{2}\big]
\,\propto\, t^{\alpha},\quad\alpha\in(0,1),
\end{align}
{ where $\E$ denotes expected value (normal diffusion corresponds to $\alpha=1$). In this paper, we model subdiffusion by a fractional Fokker-Planck equation (FPE) \cite{metzler1999}, but note that there are other models which yield \eqref{msd}, including fractional Brownian motion and generalized Langevin equations \cite{magdziarz2009, meroz2015, mckinley2018}.}

Subdiffusive dynamics have been found in diverse scenarios, including charge transport in amorphous semiconductors in photocopiers \cite{scher1975}, subsurface hydrology \cite{berkowitz2002}, and the movement of a bead in a polymer network \cite{amblard1996}. Subdiffusion is particularly prevalent in cell biology, where the phenomenon often stems from macromolecular crowding inside a cell \cite{golding2006}. The packing of organelles, proteins, lipids, sugars, and various filamentous networks in the cell can impede transport and make diffusion coefficients measured in dilute solution effectively meaningless \cite{hofling2013}. Indeed, crowded intracellular environments have been shown to significantly affect signaling pathways and search processes compared to diffusion in an empty medium \cite{isaacson2011, woringer2014, ma2020}.

An important quantity describing a randomly moving particle is the first time the particle (the ``searcher'') reaches some particular location (the ``target''), which is called a first passage time (FPT) \cite{redner2001}. In fact, FPTs determine the timescales in many physical, chemical, and biological systems \cite{redner2001}. Many theoretical and numerical studies focus on the FPT of a searcher that moves by normal diffusion \cite{benichou2008,ward10,ward10b,holcman2014,holcman2014time,grebenkov2017}, but less attention has been given to the FPT of a subdiffusive searcher \cite{lua2005,yuste2007,condamin2007,condamin2008,grebenkov2010}.

Recently, there has been significant interest and excitement in the literature regarding so-called extreme FPTs or fastest FPTs \cite{lawley2020esp, lawley2020uni, lawley2020dist, basnayake2019, schuss2019, coombs2019, redner2019, sokolov2019, rusakov2019, martyushev2019, tamm2019, basnayake2019c,godec2016x,hartich2018,hartich2019}. Mathematically, an extreme FPT is defined by
\begin{align}\label{tn}
T_{N}
:=\min\{\tau_{1},\dots,\tau_{N}\},
\end{align}
where $\{\tau_{1},\dots,\tau_{N}\}$ are $N\ge1$ independent and identically distributed (iid) FPTs. More generally, the $k$th fastest FPT is
\begin{align}\label{tkn}
T_{k,N}
:=\min\big\{\{\tau_{1},\dots,\tau_{N}\}\backslash\cup_{j=1}^{k-1}\{T_{j,N}\}\big\},\quad k\in\{1,\dots,N\},
\end{align}
where $T_{1,N}:=T_{N}$. The interest in extreme FPTs stems from the fact that many processes involve a large collection of simultaneous searchers in which the first searcher to find the target triggers an event. For example, gene regulation depends on only the fastest few transcription factors to reach a specific gene location out of roughly $N\in[10^{2},10^{4}]$  transcription factors \cite{harbison2004, godec2016x}. Similarly, human fertilization depends on the fastest sperm cell to find the egg out of roughly $N=10^{8}$ sperm cells \cite{meerson2015}.

In this paper, we investigate extreme FPTs of subdiffusive searchers. We consider subdiffusive searchers whose probability density satisfies a fractional { FPE}. Fractional FPEs were introduced in \cite{metzler1999} and generalize fractional diffusion equations \cite{schneider1989}. Fractional FPEs describe non-Markovian processes and include trapping phenomena through the Riemann-Liouville fractional differential operator \cite{samko1993}. Our approach relies on a certain stochastic representation of the subdiffusive paths  corresponding to fractional FPEs. This stochastic representation is a random time change of an It\^{o} drift-diffusion according to the trajectory of the first crossing time inverse of an independent L\'{e}vy subordinator (see \cite{magdziarz2016} and the references therein).

Our analysis yields an explicit formula for the leading order behavior of every moment of the $k$th fastest subdiffusive FPT, $T_{k,N}$, as the number of searchers grows. We note that while the mean FPT, $\E[\tau]$, of a single subdiffusive searcher is typically infinite \cite{yuste2004}, the mean of $T_{k,N}$ for subdiffusion is typically finite for large $N$ (as we prove below). In particular, we prove that for any moment $m\ge1$ and any $k\ge1$,
\begin{align}\label{leading}
\E[(T_{k,N})^{m}]
\sim\bigg(\frac{t_{\alpha}}{(\ln N)^{2/\alpha-1}}\bigg)^{m}
\quad\text{as }N\to\infty,
\end{align}
where $t_{\alpha}>0$ is the characteristic subdiffusive (or diffusive) timescale,
\begin{align*}
t_{\alpha}
:=\Big(\alpha^{\alpha}(2-\alpha)^{2-\alpha}\frac{L^{2}}{4K_{\alpha}}\Big)^{1/\alpha}>0,\quad \alpha\in(0,1],
\end{align*}
where $\alpha\in(0,1]$ is the mean-squared displacement exponent (as in \eqref{msd}), $K_{\alpha}>0$ is the generalized diffusion coefficient { (with dimension $(\text{length})^{2}(\text{time})^{-\alpha}$)}, and $L>0$ is a certain geodesic distance between the possible searcher starting locations and the target (given precisely below). { Throughout this paper, ``$f\sim g$'' means $f/g\to1$ in the limit indicated (which is as $N\to\infty$ in \eqref{leading})}. The formula~\eqref{leading} holds in significant generality, including subdiffusion in $\R^{d}$ with general space-dependent drift and diffusion coefficients. Since it was recently proven that \eqref{leading} holds for normal diffusion with $\alpha=1$ \cite{lawley2020uni}, we obtain the counterintuitive result that extreme subdiffusive FPTs are faster than extreme diffusive FPTs.

Further, assuming the short-time behavior of the survival probability of a single FPT for normal diffusion is known, we obtain the limiting probability distribution of the $k$th fastest FPT of subdiffusion and explicit three-term (or higher) asymptotic expansions for every moment as $N\to\infty$. This  probability distribution is described in terms of the classical Gumbel distribution.

A key step in our analysis is generalizing Varadhan's formula to subdiffusion. Varadhan's formula is a fundamental result in large deviation theory which determines the short-time behavior of the probability density of a drift-diffusion process on a logarithmic scale in terms of a certain geodesic distance \cite{varadhan1967}. By generalizing this result to subdiffusive processes, we obtain the short-time behavior of (i) the probability density for the position and (ii) the survival probability for the FPT for any subdiffusive process for which the short-time behavior of the corresponding diffusive process is known. This result allows us to employ methods that were developed for studying extreme FPTs of normal diffusion \cite{lawley2020uni,lawley2020dist}. 

The rest of the paper is organized as follows. In section~\ref{prelim}, we introduce notation and recall some facts about subdiffusion and fractional FPEs. In section~\ref{varadhan}, we generalize Varadhan's formula to the case of subdiffusion. In section~\ref{extreme}, we analyze extreme FPTs of subdiffusion. In section~\ref{examples}, we apply our results to several examples. We conclude by discussing relations to previous work. An Appendix collects several proofs.

\section{Preliminaries}\label{prelim}

We begin by introducing notation and recalling several results about subdiffusion modeled by a fractional FPE. Let $\{X_{\alpha}(t)\}_{t\ge0}$ be the position of a $d$-dimensional subdiffusive searcher with $d\ge1$. Let $p_{\alpha}(x,t\,|\,x_{0},0)$ be the probability density that $X_{\alpha}(t)=x\in\R^{d}$ given $X_{\alpha}(0)=x_{0}\in\R^{d}$. That is,
\begin{align*}
p_{\alpha}(x,t\,|\,x_{0},0)\,\dd x
=\P(X_{\alpha}(t)=x\,|\,X_{\alpha}(0)=x_{0}).
\end{align*}
We are interested in the case that the probability density $p_{\alpha}$ satisfies the fractional FPE,
\begin{align}\label{ffpe}
\begin{split}
\frac{\partial}{\partial t}p_{\alpha}
&=\L\D p_{\alpha},\quad x\in\R^{d},\,t>0,\\
p_{\alpha}
&=\delta(x-x_{0}),\quad x\in\R^{d},\,t=0.
\end{split}
\end{align}
Here, $\alpha\in(0,1)$ and $\D$ is the fractional derivative of Riemann-Liouville type \cite{samko1993}, defined by
\begin{align*}
\D f(t)
=\frac{1}{\Gamma(\alpha)}\frac{\dd}{\dd t}\int_{0}^{t}\frac{f(s)}{(t-s)^{1-\alpha}}\,\dd s,
\end{align*}
where $\Gamma(\alpha)=\int_{0}^{\infty}u^{\alpha-1}e^{-u}\,\dd u$  is the Gamma function. Further, $\L$ is the forward Fokker-Planck operator,
\begin{align}\label{lop}
\L f(x)
:=-\sum_{i=1}^{d}\frac{\partial}{\partial x_{i}}\bigg[\frac{b_{i}(x)}{\eta_{\alpha}}f(x)\bigg]
+K_{\alpha}\sum_{i=1}^{d}\sum_{j=1}^{d}\frac{\partial^{2}}{\partial x_{i}\partial x_{j}}
\Big[\big({{\Sigma}}(x){{\Sigma}}(x)^{\top}\big)_{i,j}f(x)\Big],
\end{align}
where $\eta_{\alpha}>0$ is a generalized friction coefficient with dimension $(\text{time})^{\alpha-2}$ \cite{metzler1999}, $b(x):\R^{d}\mapsto\R^{d}$ is a space-dependent vector with dimension $(\text{length})(\text{time})^{-2}$ describing the drift, $K_{\alpha}>0$ is a generalized diffusion coefficient with dimension $(\text{length})^{2}(\text{time})^{-\alpha}$, and ${{\Sigma}}(x):\R^{d}\mapsto\R^{d\times m}$ is a dimensionless function describing any space dependence or anisotropy in the diffusion. Assume $b$ and ${{\Sigma}}$
satisfy mild conditions (namely that $b$ is uniformly bounded
and uniformly Lipschitz continuous and that ${{\Sigma}}{{\Sigma}}^{\top}$ is uniformly Lipschitz continuous and its eigenvalues are bounded above $\gamma_{1}>0$ and bounded below $\gamma_{2}>\gamma_{1}$). Note that $\L$ and $\D$ commute since $b$ and ${{\Sigma}}$ and independent of time.

It is well-known that a subdiffusive process whose probability density satisfies a fractional FPE can be written as a random time change of a diffusive process satisfying an It\^{o} stochastic differential equation (SDE) \cite{magdziarz2016,umarov2016,magdziarz2007,meerschaert2002}. Specifically, throughout this paper we let $\{U_{\alpha}(s)\}_{s\ge0}$ be an $\alpha$-stable subordinator \cite{janicki1993, sato1999} with Laplace transform
\begin{align}\label{laplacetransform}
\E[e^{-rU_{\alpha}(s)}]=e^{-sr^{\alpha}},\quad \alpha\in(0,1).
\end{align}
Let $S_{\alpha}(t)$ be the inverse $\alpha$-stable subordinator,
\begin{align}\label{S}
S_{\alpha}(t)
:=\inf\{s>0:U_{\alpha}(s)>t\}.
\end{align}
Sample paths of $U_{\alpha}$ are continuous from the right with left hand limits, strictly increasing, and satisfy $U_{\alpha}(0)=0$ and $U_{\alpha}(s)\to\infty$ as $s\to\infty$. Therefore, the inverse process \eqref{S} is well-defined and has almost surely continuous sample paths.

Let $\{X_{1}(s)\}_{s\ge0}$ be a $d$-dimensional diffusion process satisfying the It\^{o} SDE,
\begin{align}\label{sde}
\dd X_{1}(s)
=\frac{b(X_{1})}{\eta_{\alpha}}\,\dd s+\sqrt{2K_{\alpha}}{{\Sigma}}(X_{1})\,\dd W(s),\quad
X_{1}(0)=x_{0}\in\R^{d},
\end{align}
where $\{W(s)\}_{s\ge0}\in\R^{m}$ is a standard Brownian motion independent of $U_{\alpha}$. Note that $\L$ in \eqref{lop} is the forward Fokker-Planck operator corresponding to \eqref{sde}. Further note that $\{U_{\alpha}(s)\}_{s\ge0}$ and $\{X_{1}(s)\}_{s\ge0}$ are indexed by the ``internal time'' $s\ge0$, which is not real, physical time, and in fact has dimension $(\text{time})^{\alpha}$.

We then construct the subdiffusive process $X_{\alpha}$ from the diffusive process $X_{1}$ indexed by $S_{\alpha}$. Specifically, we define
\begin{align}\label{timechange}
X_{\alpha}(t)
:=X_{1}(S_{\alpha}(t)),\quad t\ge0.
\end{align}
It is well-known that the probability density of \eqref{timechange} satisfies the fractional FPE in \eqref{ffpe} (see Theorem 2.1 in \cite{carnaffan2017} for a proof in this particular multidimensional setting). Note that $X_{\alpha}$ has continuous sample paths since $S_{\alpha}$ and $X_{1}$ have continuous sample paths (with probability one). See Figure~\ref{figtimechange} for an illustration.

\begin{figure}[t]
\centering
\includegraphics[width=1\linewidth]{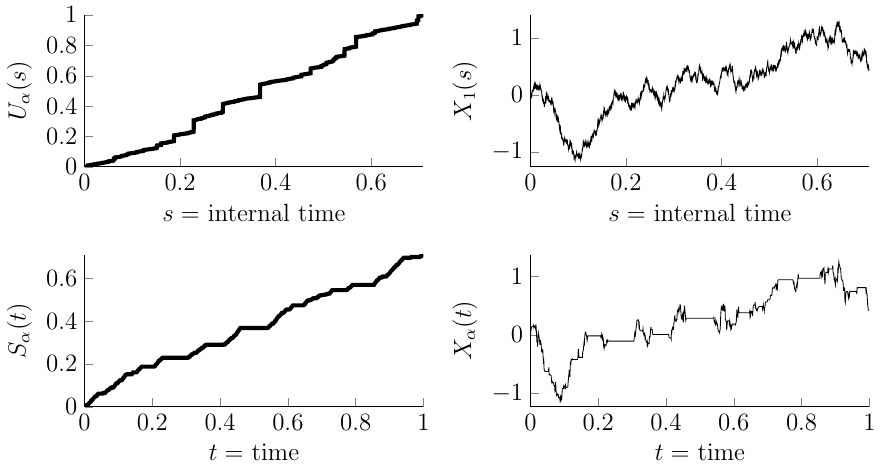}
\caption{Construction of a subdiffusive process as a random time change of a diffusive process. The top left panel plots a realization of the $\alpha$-stable subordinator $U_{\alpha}(s)$ as a function of the internal time $s$. The bottom left panel plots the corresponding realization of the inverse $\alpha$-stable subordinator $S_{\alpha}(t)$ as a function of the physical time $t$. Notice that jumps of $U_{\alpha}$ correspond to time intervals in which $S_{\alpha}$ is constant (i.e.\ pauses in $S_{\alpha}$). To top right panel plots the normal diffusive process $X_{1}(s)$. The bottom right panel plots the subdiffusive process $X_{\alpha}(t)=X_{1}(S_{\alpha}(t))$. Notice that (i) $X_{\alpha}$ pauses when $S_{\alpha}$ pauses and (ii) the path of $X_{\alpha}$ otherwise resembles $X_{1}$. This simulation employs the method in \cite{magdziarz2007}.}
\label{figtimechange}
\end{figure}

The construction of $X_{\alpha}$ in \eqref{timechange} means that we can study $X_{\alpha}$ by studying $X_{1}$ and $S_{\alpha}$. Since the driving Brownian motion $W$ in \eqref{sde} and the subordinator $U_{\alpha}$ are independent, the diffusion $X_{1}$ and the inverse subordinator $S_{\alpha}$ are independent. Therefore, conditioning on the value of $S_{\alpha}(t)$ yields that the probability density of $X_{\alpha}$ satisfying the fractional FPE \eqref{ffpe} is given by
\begin{align}\label{prep}
p_{\alpha}(x,t\,|\,x_{0},0)
=\int_{0}^{\infty}q_{\alpha}(s,t)p_{1}(x,s\,|\,x_{0},0)\,\dd s,
\end{align}
where $q_{\alpha}(s,t)$ is the probability density that $S_{\alpha}(t)=s$ and $p_{1}(x,s\,|\,x_{0},0)$ is the probability density that $X_{1}(s)=x$ (which satisfies \eqref{ffpe} without the Riemann-Liouville operator $\D$). Furthermore, the definition of $S_{\alpha}(t)$ in \eqref{S} and the self-similarity property that {$\{c^{-1/\alpha}U_{\alpha}(cs)\}_{s\ge0}$} is equal in distribution to {$\{U_{\alpha}(s)\}_{s\ge0}$} for all $c>0$ implies that
\begin{align}\label{rela}
\P(S_{\alpha}(t)\le s)
=\P(t\le U_{\alpha}(s))
=\P(t\le s^{1/\alpha}U_{\alpha}(1)).
\end{align}
Therefore, differentiating \eqref{rela} yields the probability density 
\begin{align*}
q_{\alpha}(s,t)
:=\frac{\dd}{\dd s}\P(S_{\alpha}(t)\le s)
=\frac{t}{\alpha s^{1+1/\alpha}}l_{\alpha}\Big(\frac{t}{s^{1/\alpha}}\Big),
\end{align*}
where $l_{\alpha}(z)$ is the probability density of {$U_{\alpha}(1)$}.

The probability density $l_{\alpha}(z)$ is usually defined by its Laplace transform,
\begin{align}\label{lltdef}
\int_{0}^{\infty}e^{-rz}l_{\alpha}(z)\,\dd z
=e^{-r^{\alpha}},
\end{align}
since a simple explicit formula for $l_{\alpha}(z)$ for arbitrary $\alpha\in(0,1)$ is unknown {\cite{penson2010, kosztolowicz2004}}. The density $l_{\alpha}(z)$ has the small $z$ behavior \cite{schneider1986, barkai2001},
\begin{align}\label{l}
l_{\alpha}(z)
&\sim Bz^{-{\xi}}e^{-\kappa/z^{{\theta}}}\quad\text{as }z\to0+,
\end{align}
where
\begin{align}\label{ldef}
\begin{split}
{\theta}
&=\frac{\alpha}{1-\alpha},\quad
\kappa
=(1-\alpha)\alpha^{\alpha/(1-\alpha)},\\
B
&=\sqrt{\frac{\alpha^{1/(1-\alpha)}}{2\pi(1-\alpha)}},\quad
{\xi}
=\frac{2-\alpha}{2(1-\alpha)}.
\end{split}
\end{align}

We can similarly express the distribution of FPTs of the subdiffusive process $X_{\alpha}$ in terms of $q_{\alpha}$ and the distribution of FPTs of the diffusive process $X_{1}$. Let ${\sigma}$ be the FPT of $X_{1}$ to some target set $\Omega_{\text{T}}\subset\R^{d}$,
\begin{align*}
{\sigma}
:=\inf\{s>0:X_{1}(s)\in \Omega_{\text{T}}\}.
\end{align*}
It follows that the FPT of $X_{\alpha}$ to the target is given by $U_{\alpha}({\sigma})$,
\begin{align}\label{relatime}
{\tau}
:=\inf\{t>0:X_{\alpha}(t)\in \Omega_{\text{T}}\}
=U_{\alpha}({\sigma}).
\end{align}
Therefore, the distribution of ${\tau}$ is obtained by integrating $q_{\alpha}$ against the distribution of ${\sigma}$,
\begin{align}\label{taurep}
\P({\tau}\le t)
=\int_{0}^{\infty}q_{\alpha}(s,t)\P({\sigma}\le s)\,\dd s.
\end{align}

\section{Short-time distributions for subdiffusion}\label{varadhan}

In this section, we use the representations \eqref{prep} and \eqref{taurep} to study the short-time behavior of the subdiffusive probability density $p_{\alpha}$ and the distribution of the subdiffusive FPT ${\tau}$. In particular, we use the asymptotics of $l_{\alpha}(z)$ in \eqref{l} and the fact that diffusive probability densities and diffusive FPTs have the following short-time behavior under very general conditions \cite{varadhan1967,lawley2020uni},
\begin{align}
\lim_{s\to0+}s\ln p_{1}(x,s\,|\,x_{0},0)
&=-C_{1}'<0,\label{vvv}\\
\lim_{s\to0+}s\ln \P({\sigma}\le s)
&=-C_{1}<0,\label{sss}
\end{align}
for constants $C_{1}'>0,C_{1}>0$. Equation~\eqref{vvv} is known as Varadhan's formula \cite{varadhan1967} and generally holds as long as $x\neq x_{0}$. Equation~\eqref{sss} generally holds as long as the diffusive searcher $X_{1}$ cannot start arbitrarily close to the target \cite{lawley2020uni}. Furthermore, in many scenarios it is possible to obtain more detailed information than \eqref{vvv}-\eqref{sss}, and in fact to show that
\begin{align}
p_{1}(x,s\,|\,x_{0},0)
&\sim A_{1}'s^{p_{1}'}e^{-C_{1}'/s}\quad\text{as }s\to0+,\label{vvv2}\\
\P({\sigma}\le s)
&\sim A_{1}s^{p_{1}}e^{-C_{1}/s}\quad\text{as }s\to0+,\label{sss2}
\end{align}
for constants $A_{1}'>0$, $A_{1}>0$, $p_{1}'\in\R$, $p_{1}\in\R$, and $C_{1}'>0$, $C_{1}>0$. { For example, see the scenarios in sections~\ref{half}-\ref{narrow} where we show that \eqref{sss2} holds.} Throughout this paper,
\begin{align*}
``f\sim g''\quad\text{means}\quad
f/g\to1,
\end{align*}
{ in the limit indicated (which is in the limit $s\to0+$ in \eqref{vvv2}-\eqref{sss2}).}

\subsection{General integral asymptotics}

We start with two general results on the asymptotic behavior of integrals of the form \eqref{prep} and \eqref{taurep} assuming the functions in the integrands have the asymptotic behaviors in either \eqref{vvv}-\eqref{sss} or \eqref{vvv2}-\eqref{sss2}. We note that in Theorems~\ref{log} and \ref{more} below, the function $F_{1}(s)$ plays the role of either $p_{1}(x,s\,|x_{0},0)$ or $\P(\sigma\le s)$ in \eqref{vvv}-\eqref{sss2}, and $l(z)$ plays the role of $l_{\alpha}(z)$ in \eqref{lltdef}, but the theorems are stated and proven for general functions $F_{1}(s)$ and $l(z)$.

\begin{theorem}\label{log}
Assume $F_{1}(s)$ is a bounded, positive function that satisfies
\begin{align}\label{logF}
\lim_{s\to0+}s\ln F_{1}(s)=-C_{1}<0,
\end{align}
for some constant $C_{1}>0$. Assume $l(z)$ is a positive function that satisfies $\int_{0}^{\infty}l(z)\,\dd z<\infty$ and
\begin{align}\label{logl}
\lim_{z\to0+}z^{{\theta}}\ln l(z)
=-\kappa<0,
\end{align}
for some constants ${\theta}>0$ and $\kappa>0$. If $\alpha>0$ and 
\begin{align*}
F_{\alpha}(t)
:=\int_{0}^{\infty}\frac{t}{\alpha s^{1+1/\alpha}}
l\Big(\frac{t}{s^{1/\alpha}}\Big)F_{1}(s)\,\dd s,
\end{align*}
then
\begin{align*}
\lim_{t\to0+}t^{\beta}\ln F_{\alpha}(t)=-C<0,
\end{align*}
where
\begin{align}\label{bcdef}
\beta
=\frac{\alpha{\theta}}{\alpha+{\theta}},\quad
C
:=C_{1}\Big(\frac{\kappa{\theta}}{C_{1}\alpha}\Big)^{\frac{\alpha}{\alpha+{\theta}}}\Big(\frac{\alpha+{\theta}}{{\theta}}\Big).
\end{align}
\end{theorem}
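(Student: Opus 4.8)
The plan is to estimate the integral
\[
F_{\alpha}(t)=\int_{0}^{\infty}\frac{t}{\alpha s^{1+1/\alpha}}\,l\!\Big(\frac{t}{s^{1/\alpha}}\Big)F_{1}(s)\,\dd s
\]
by a Laplace-type (saddle-point) analysis on a logarithmic scale. First I would substitute $z=t/s^{1/\alpha}$, equivalently $s=(t/z)^{\alpha}$, which turns the $l(t/s^{1/\alpha})$ factor into $l(z)$ and rewrites $F_{1}(s)=F_{1}((t/z)^{\alpha})$. Under this change of variables the prefactor $\frac{t}{\alpha s^{1+1/\alpha}}\,\dd s$ becomes (up to elementary powers of $z$ and $t$) $\dd z$, so up to polynomial-in-$t$ and polynomial-in-$z$ factors — which are irrelevant on the scale $t^{\beta}\ln(\cdot)$ — we get $F_{\alpha}(t)\approx\int_{0}^{\infty}l(z)\,F_{1}((t/z)^{\alpha})\,\dd z$. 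Now I would insert the two exponential asymptotics: $\ln l(z)\sim-\kappa z^{-\theta}$ as $z\to0+$ from \eqref{logl}, and $\ln F_{1}(s)\sim -C_{1}/s$ as $s\to 0+$ from \eqref{logF}, with $s=(t/z)^{\alpha}$ giving $\ln F_{1}((t/z)^{\alpha})\sim-C_{1}(z/t)^{\alpha}=-C_{1}t^{-\alpha}z^{\alpha}$. So the integrand's exponential part is roughly $\exp\big(-\kappa z^{-\theta}-C_{1}t^{-\alpha}z^{\alpha}\big)$, and the whole problem reduces to a Laplace-method estimate of $\int_{0}^{\infty}\exp(-\phi_{t}(z))\,\dd z$ with $\phi_{t}(z)=\kappa z^{-\theta}+C_{1}t^{-\alpha}z^{\alpha}$.

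Next I would minimize $\phi_{t}$ over $z>0$. Setting $\phi_{t}'(z)=-\kappa\theta z^{-\theta-1}+C_{1}\alpha t^{-\alpha}z^{\alpha-1}=0$ gives the critical point $z_{*}=\big(\tfrac{\kappa\theta}{C_{1}\alpha}t^{\alpha}\big)^{1/(\alpha+\theta)}$, which scales like $t^{\alpha/(\alpha+\theta)}\to0$, so $z_{*}$ indeed lies in the regime where both asymptotics \eqref{logl}, \eqref{logF} are valid. Substituting back, $\phi_{t}(z_{*})=\kappa z_{*}^{-\theta}+C_{1}t^{-\alpha}z_{*}^{\alpha}$; a short computation shows both terms scale like $t^{-\alpha\theta/(\alpha+\theta)}=t^{-\beta}$, and collecting constants yields exactly $\phi_{t}(z_{*})=C\,t^{-\beta}$ with $C$ as in \eqref{bcdef}. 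Since $\ln F_{\alpha}(t)=-\phi_{t}(z_{*})+O(\text{lower order})$ and multiplying by $t^{\beta}$ kills the polynomial corrections, this gives $\lim_{t\to0+}t^{\beta}\ln F_{\alpha}(t)=-C$.

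To make this rigorous I would split the proof into a matching upper and lower bound. For the upper bound: fix any $\eps>0$; by \eqref{logF} and \eqref{logl} there is $\delta>0$ with $\ln F_{1}(s)\le -(C_{1}-\eps)/s$ for $s\le\delta$ and $\ln l(z)\le -(\kappa-\eps)z^{-\theta}$ for $z\le\delta$. I would bound the contribution of the region where $s$ and $z=t/s^{1/\alpha}$ are both small by $\int \exp(-(\kappa-\eps)z^{-\theta}-(C_{1}-\eps)t^{-\alpha}z^{\alpha})\,\dd z$ times polynomial factors, and bound this last integral above by its maximum value times its length (or by a crude $\int\exp(-\tfrac12\phi_{t}^{\eps})$ estimate), producing $\le \exp(-(C-\eps')t^{-\beta})$ for small $t$; the complementary regions ($s\ge\delta$ or $z\ge\delta$) contribute only $\exp(-c/t^{\beta'})$ with a larger exponent or are controlled by boundedness of $F_{1}$ and integrability of $l$, hence are negligible. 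For the lower bound, I would restrict the integral to a small neighborhood of $z_{*}$, say $z\in[z_{*}(1-c),z_{*}(1+c)]$, use the reversed inequalities $\ln F_{1}(s)\ge -(C_{1}+\eps)/s$, $\ln l(z)\ge -(\kappa+\eps)z^{-\theta}$ there (valid since $z_{*}\to0$), and bound the integrand below by its value at an interior point times the interval length; this gives $\ge \exp(-(C+\eps')t^{-\beta})$. Letting $\eps\to0$ after $t\to0$ yields the claim. The main obstacle — and the only genuinely delicate point — is the bookkeeping that confirms the ``negligible'' regions really are negligible on the $t^{\beta}\ln(\cdot)$ scale and that the polynomial prefactors (from the Jacobian, from any sub-exponential decoration of $F_{1}$ or $l$, and from the crude ``max times length'' Laplace bound) all wash out; handling the contribution of $F_{1}(s)$ for $s$ bounded away from $0$, where only boundedness is assumed, requires using the fast decay of $l(t/s^{1/\alpha})$ as $t\to0$ with $s$ fixed.
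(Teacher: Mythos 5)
Your proposal is correct and follows essentially the same route as the paper: decompose the integral into a dominant middle region plus two tails, sandwich the middle integrand between exact exponential forms with $\eps$-perturbed constants $C_{1}\pm\eps$, $\kappa\pm\eps$, locate the saddle point (which scales as $t^{\alpha/(\alpha+\theta)}$ in $z$, equivalently $t^{\alpha\theta/(\alpha+\theta)}$ in $s$), and check that the tails decay like $e^{-c/t^{\alpha}}$ and $e^{-c/t^{\theta}}$, both negligible since $\beta<\min\{\alpha,\theta\}$. The only differences are cosmetic: you work in the variable $z=t/s^{1/\alpha}$ (where the Jacobian in fact cancels exactly, so there are no leftover prefactors) and replace the paper's precise Laplace-method lemma by a cruder max-times-length bound, which suffices for the logarithmic asymptotics claimed here.
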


Theorem~\ref{log} gives the logarithmic asymptotic behavior of the integral $F_{\alpha}(t)$ assuming that only the logarithmic asymptotic behavior of $l(z)$ and $F_{1}(s)$ are known. The next theorem assumes more detailed information about the asymptotic behavior of $l(z)$ and $F_{1}(s)$ to obtain more detailed information about the asymptotic behavior of $F_{\alpha}(t)$.

\begin{theorem}\label{more}
Under the assumptions of Theorem~\ref{log}, assume further that
\begin{align}
F_{1}(s)
&\sim A_{1}s^{p_{1}}e^{-C_{1}/s}\quad\text{as }s\to0+,\label{asympF}\\
l(z)
&\sim Bz^{-{\xi}}e^{-\kappa/z^{{\theta}}}\quad\text{as }z\to0+,\label{asympl}
\end{align}
where $A_{1}>0$, $p_{1}\in\R$, $C_{1}>0$, $B>0$, ${\xi}\in\R$, $\kappa>0$, ${\theta}>0$. Then
\begin{align*}
F_{\alpha}(t)
:=\int_{0}^{\infty}\frac{t}{\alpha s^{1+1/\alpha}}
l\Big(\frac{t}{s^{1/\alpha}}\Big)F_{1}(s)\,\dd s
&\sim At^{p}e^{-C/t^{\beta}}\quad\text{as }t\to0+,
\end{align*}
where $\beta$ and $C$ are given in \eqref{bcdef} and
\begin{align}\label{complicated2}
A
&:= A_{1} B \sqrt{\frac{2\pi}{C_{1}\alpha(\alpha +{\theta} )}} \left(\frac{\alpha  C_{1}}{\kappa  {\theta} }\right)^{\frac{\alpha +2 \alpha  p_{1}+2 {\xi} -2}{2 (\alpha +{\theta} )}},\quad
p
:=\frac{\alpha  (2 p_{1} {\theta} -2 {\xi} +{\theta} +2)}{2 (\alpha +{\theta} )}.
\end{align}
\end{theorem}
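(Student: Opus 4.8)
The plan is to evaluate the integral $F_\alpha(t)$ by Laplace's method (the saddle-point method for a real integral), using the precise asymptotics \eqref{asympF}--\eqref{asympl} rather than just their logarithmic versions. First I would substitute the asymptotic forms into the integrand: for small $t$ the dominant contribution comes from small $s$ (which forces the argument $z = t/s^{1/\alpha}$ to be small as well, so both asymptotics apply), giving the integrand approximately
\begin{align*}
\frac{t}{\alpha s^{1+1/\alpha}}\, B\Big(\frac{t}{s^{1/\alpha}}\Big)^{-\xi} e^{-\kappa s/t^{\theta}\cdot (\,\cdot\,)}\, A_1 s^{p_1} e^{-C_1/s},
\end{align*}
where I must be careful with the exponent: $\kappa/z^\theta = \kappa s^{\theta/\alpha}/t^\theta$. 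So the exponential factor is $\exp\{-\kappa s^{\theta/\alpha} t^{-\theta} - C_1 s^{-1}\}$. I would then make the substitution $s = t^\beta u$ for a suitable scaling exponent $\beta$ (to be matched with \eqref{bcdef}) chosen so that both terms in the exponent scale the same way in $t$; this turns the exponent into $-t^{-\beta}\,\phi(u)$ for a fixed function $\phi(u) = \kappa u^{\theta/\alpha} + C_1/u$ (after absorbing the power of $t$ correctly), and the prefactor into a power of $t$ times a power of $u$.

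Next I would locate the minimum $u_\ast$ of the phase $\phi(u)$: setting $\phi'(u) = (\theta/\alpha)\kappa u^{\theta/\alpha - 1} - C_1 u^{-2} = 0$ gives $u_\ast = (\alpha C_1/(\kappa\theta))^{\alpha/(\alpha+\theta)}$, and the minimum value $\phi(u_\ast)$ should reproduce the constant $C$ in \eqref{bcdef}. A routine but careful computation of $\phi(u_\ast)$ and $\phi''(u_\ast)$ feeds the standard Laplace-method formula
\begin{align*}
\int_0^\infty g(u)\, e^{-\lambda \phi(u)}\,\dd u \sim g(u_\ast)\sqrt{\frac{2\pi}{\lambda\,\phi''(u_\ast)}}\, e^{-\lambda\phi(u_\ast)}\quad\text{as }\lambda\to\infty,
\end{align*}
with $\lambda = t^{-\beta}$ (up to the constant I absorbed) and $g(u)$ the combined algebraic prefactor $u^{p_1 - \xi/\alpha - 1 - 1/\alpha}$-type power coming from $s^{p_1}\cdot s^{-1-1/\alpha}\cdot(t/s^{1/\alpha})^{-\xi}$ together with the powers of $t$. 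Collecting the power of $t$ from the Jacobian $\dd s = t^\beta\,\dd u$, from the explicit $t$-factors in the integrand, and from $\sqrt{t^\beta}$ yields the exponent $p$ in \eqref{complicated2}, while the constant $A_1 B u_\ast^{(\cdots)}\sqrt{2\pi/\phi''(u_\ast)}$ yields $A$. The bookkeeping of exponents is tedious but mechanical, and matching it to the stated closed forms for $A$ and $p$ is the verification step.

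The main obstacle is rigor rather than the heuristic: I must justify that the contributions from $s$ bounded away from $0$ (where \eqref{asympF}--\eqref{asympl} need not hold, only boundedness of $F_1$ and integrability of $l$) are exponentially negligible compared to $At^p e^{-C/t^\beta}$, and similarly control the region where $z=t/s^{1/\alpha}$ is not small. This is handled by splitting the integral at, say, $s\in(t^{\beta}/M, Mt^{\beta})$ versus its complement and using the logarithmic bounds from Theorem~\ref{log} (or direct crude estimates $F_1 \le \text{const}$, $l(z)\le \text{const}\cdot z^{-\xi}e^{-\kappa/z^\theta}$ near $0$, and the integrability tail of $l$) to show the outer regions contribute $o(e^{-(C-\delta)/t^\beta})$ for every $\delta>0$; one also needs a dominated-convergence or uniform-convergence argument inside the $u$-integral to replace $F_1$ and $l$ by their leading asymptotics uniformly on compact $u$-sets and to control the $u\to0$ and $u\to\infty$ tails of $\phi$. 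Since $\phi(u)\to\infty$ at both endpoints, these tails are exponentially suppressed, so the standard Laplace-method error analysis goes through; the only real care is near $s=0^+$ where $e^{-C_1/s}$ must dominate any algebraic blow-up from $s^{p_1}$ or $z^{-\xi}$, which it does.
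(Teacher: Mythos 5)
Your proposal is correct and follows essentially the same route as the paper's proof: the paper also splits the integral into a central window containing the saddle $s_0(t)=(\alpha C_1/(\kappa\theta))^{\alpha/(\alpha+\theta)}t^{\alpha\theta/(\alpha+\theta)}$ and two outer regions (handled by the boundedness of $F_1$, the integrability of $l$, and the decay of the exponential factors), sandwiches the integrand in the central window between $(1\pm\delta)$ times the exact forms, and evaluates the resulting model integral by Laplace's method after rescaling $s$ by the saddle location (the paper isolates this computation as Lemma~\ref{exact}). Your identification of the phase $\phi(u)=\kappa u^{\theta/\alpha}+C_1/u$, its minimizer, and the bookkeeping needed to produce $A$ and $p$ matches the paper's argument.
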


\subsection{Application to FPTs}

We now show how Theorems~\ref{log} and \ref{more} can be used to study the FPT distribution of a subdiffusive process. In particular, if the short-time behavior of the distribution of a diffusive FPT is known, then Corollary~\ref{corlog} below gives the short-time behavior of the distribution of the corresponding subdiffusive FPT.

\begin{corollary}\label{corlog}[Subdiffusive FPT]
Let ${\sigma}$ be a random variable satisfying
\begin{align*}
\lim_{s\to0+}s\ln\P({\sigma}\le s)=-C_{1}<0,
\end{align*}
for some constant $C_{1}>0$. If $\{U_{\alpha}(t)\}_{t\ge0}$ is an $\alpha$-stable subordinator as in \eqref{laplacetransform} and is independent of ${\sigma}$, then
\begin{align*}
\lim_{t\to0+}t^{\beta}\ln\P(U_{\alpha}({\sigma})\le t)=-C<0,
\end{align*}
where
\begin{align}\label{baca}
\beta
&:=\frac{\alpha}{2-\alpha},\quad
C
:=(2-\alpha) \alpha ^{\frac{\alpha }{2-\alpha}} C_{1}^{\frac{1}{2-\alpha }},
\end{align}

If we additionally assume that
\begin{align*}
\P({\sigma}\le s)
\sim A_{1}s^{p_{1}}e^{-C_{1}/s}\quad\text{as }s\to0+,
\end{align*}
for some $A_{1}>0$, $p_{1}\in\R$, and $C_{1}>0$, then
\begin{align*}
\P(U_{\alpha}({\sigma})\le t)
&\sim At^{p}e^{-C/t^{\beta}}\quad\text{as }t\to0+,
\end{align*}
where $\beta$ and $C$ are in \eqref{baca} and 
\begin{align}\label{ap}
A
&:=\frac{A_{1} \Big(\alpha ^{\frac{-\alpha }{2-\alpha}} C_{1}^{\frac{1-\alpha}{2-\alpha}}\Big)^{p_{1}}}{\sqrt{\alpha (2-\alpha)}},\quad
p
:=\beta p_{1}.
\end{align}
\end{corollary}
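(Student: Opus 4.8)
The plan is to observe that $\P(U_{\alpha}(\sigma)\le t)$ is exactly an integral of the form handled by Theorems~\ref{log} and \ref{more}, and then to specialize those theorems using the known small-argument asymptotics of the one-sided stable density $l_{\alpha}$. Concretely, combining \eqref{relatime} and \eqref{taurep} with the explicit expression $q_{\alpha}(s,t)=\frac{t}{\alpha s^{1+1/\alpha}}l_{\alpha}(t/s^{1/\alpha})$ gives
\begin{align*}
\P(U_{\alpha}(\sigma)\le t)
=\int_{0}^{\infty}\frac{t}{\alpha s^{1+1/\alpha}}\,l_{\alpha}\Big(\frac{t}{s^{1/\alpha}}\Big)\,\P(\sigma\le s)\,\dd s,
\end{align*}
which is precisely $F_{\alpha}(t)$ with $F_{1}(s):=\P(\sigma\le s)$ and $l(z):=l_{\alpha}(z)$. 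So the first task is to verify the hypotheses of Theorem~\ref{log}: $F_{1}$ is bounded (being a probability, $\P(\sigma\le s)\le1$), it satisfies \eqref{logF} by assumption, and it is positive on $(0,\infty)$ because the finiteness of $\lim_{s\to0+}s\ln\P(\sigma\le s)=-C_{1}$ forces $\P(\sigma\le s)>0$ for all small $s>0$ while $s\mapsto\P(\sigma\le s)$ is nondecreasing; meanwhile $l_{\alpha}$ is positive, integrates to $1$, and by \eqref{l}--\eqref{ldef} satisfies \eqref{logl} with ${\theta}=\alpha/(1-\alpha)$ and $\kappa=(1-\alpha)\alpha^{\alpha/(1-\alpha)}$.

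Granting this, Theorem~\ref{log} immediately yields $\lim_{t\to0+}t^{\beta}\ln\P(U_{\alpha}(\sigma)\le t)=-C$ with $\beta,C$ given by \eqref{bcdef}, and the remaining work is purely algebraic: substituting ${\theta}=\alpha/(1-\alpha)$ gives $\alpha+{\theta}=\alpha(2-\alpha)/(1-\alpha)$, hence $\alpha/(\alpha+{\theta})=(1-\alpha)/(2-\alpha)$ and $(\alpha+{\theta})/{\theta}=2-\alpha$, while substituting $\kappa$ gives $\kappa{\theta}/(C_{1}\alpha)=\alpha^{\alpha/(1-\alpha)}/C_{1}$, so that $\beta=\alpha/(2-\alpha)$ and $C=(2-\alpha)\,\alpha^{\alpha/(2-\alpha)}\,C_{1}^{1/(2-\alpha)}$, i.e.\ \eqref{baca}. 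For the refined statement, the additional hypothesis $\P(\sigma\le s)\sim A_{1}s^{p_{1}}e^{-C_{1}/s}$ is exactly \eqref{asympF}, and \eqref{asympl} holds with $B=\sqrt{\alpha^{1/(1-\alpha)}/(2\pi(1-\alpha))}$, ${\xi}=(2-\alpha)/(2(1-\alpha))$ from \eqref{l}--\eqref{ldef}; Theorem~\ref{more} then gives $\P(U_{\alpha}(\sigma)\le t)\sim At^{p}e^{-C/t^{\beta}}$ with $A,p$ from \eqref{complicated2}, and one again simplifies. In the formula for $p$ the terms not involving $p_{1}$ cancel (since $\alpha+2(1-\alpha)-(2-\alpha)=0$), leaving $p=p_{1}\alpha{\theta}/(\alpha+{\theta})=\beta p_{1}$; and feeding ${\theta},\kappa,B,{\xi}$ into the formula for $A$ collapses it to $A=A_{1}\big(\alpha^{-\alpha/(2-\alpha)}C_{1}^{(1-\alpha)/(2-\alpha)}\big)^{p_{1}}\big/\sqrt{\alpha(2-\alpha)}$, which is \eqref{ap}.

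There is no substantive obstacle beyond Theorems~\ref{log} and \ref{more} themselves; the only points requiring care are (i) confirming that the classical small-$z$ estimate \eqref{l} for $l_{\alpha}$ supplies exactly the two levels of hypotheses needed (the logarithmic version \eqref{logl} for the first claim and the sharp version \eqref{asympl} for the second), and (ii) the bookkeeping in reducing the general constants $(\beta,C,A,p)$ of \eqref{bcdef}--\eqref{complicated2} to the clean closed forms \eqref{baca}--\eqref{ap}. I expect step (ii), the constant simplification, to be the most error-prone, though it is entirely routine.
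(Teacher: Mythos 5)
Your proposal is correct and follows exactly the paper's own route: the paper proves Corollary~\ref{corlog} by applying Theorems~\ref{log}--\ref{more} with $F_{1}(s)=\P(\sigma\le s)$ and $l(z)=l_{\alpha}(z)$, invoking \eqref{taurep} and the constants $\theta,\kappa,B,\xi$ from \eqref{l}--\eqref{ldef}, and your constant simplifications reducing \eqref{bcdef}--\eqref{complicated2} to \eqref{baca}--\eqref{ap} all check out. The only difference is that you spell out the hypothesis verification and algebra that the paper leaves implicit.
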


\subsection{Varadhan's formula for subdiffusion}

We now show how Theorems~\ref{log} and \ref{more} can be used to study the probability density for the position of a subdiffusive process. Similar to the subsection above, if the short-time behavior of the probability density for the position of a diffusive searcher is known, then Corollaries~\ref{corv} and \ref{corg} below give the short-time behavior of the probability density for the position of the corresponding subdiffusive searcher. These results extend Varadhan's formula to subdiffusion \cite{varadhan1967}. 

We begin by reviewing Varadhan's formula. Let $p_{1}(x,s\,|\,x_{0},0)$ denote the probability density of the It\^{o} process $\{X_{1}(s)\}_{s\ge0}$ satisfying \eqref{sde}. For any smooth path $\omega:[0,1]\to \R^{d}$, we define the length of the path in the Riemannian metric corresponding to the inverse of the diffusion matrix $a:={{\Sigma}}{{\Sigma}}^{\top}$ in \eqref{sde},
\begin{align}\label{ll}
d(\omega)
:=\int_{0}^{1}\sqrt{\dot{\omega}^{\top}(s)a^{-1}(\omega(s))\dot{\omega}(s)}\,\dd s.
\end{align}
Then, Varadhan's formula is \cite{varadhan1967}
\begin{align}\label{vv}
\lim_{s\to0+}s\ln p_{1}(x,s\,|\,x_{0},0)
=-\frac{\drie^{2}({x_{0}},{{x}})}{4K_{\alpha}},
\end{align}
where $\drie$ is the geodesic length
\begin{align}\label{drie}
\begin{split}
\drie({x_{0}},{{x}})
&:=\inf\{d(\omega):\omega(0)=x_{0},\,\omega(1)=x\},
\end{split}
\end{align}
where the infimum in \eqref{drie} is over smooth paths $\omega:[0,1]\to \R^{d}$ which go from $\omega(0)={x_{0}}$ to $\omega(1)={{x}}$. In words, $\drie(x_{0},x)$ is the length of the optimal path from $x_{0}$ to $x$, where ``optimal'' balances being short in Euclidean length and avoiding areas of slow diffusion. Notice that $\drie$ is merely the Euclidean length of the straight line path from $x_{0}$ to $x$ if $a$ is the identity matrix, which corresponds to isotropic, spatially constant diffusion. Note also that $\drie$ is independent of the drift in \eqref{sde}.

\begin{corollary}\label{corv}[Varadhan's formula for subdiffusion]
Assume $p_{\alpha}(x,t\,|\,x_{0},0)$ satisfies the fractional FPE \eqref{ffpe}. If $x\neq x_{0}$, then
\begin{align*}
\lim_{t\to0+}t^{\beta}\ln p_{\alpha}(x,t\,|\,x_{0},0)=-C<0,
\end{align*}
where
\begin{align*}
\beta
&:=\frac{\alpha}{2-\alpha},
\quad
C
:=(2-\alpha) \alpha ^{\frac{\alpha }{2-\alpha}} \Big(\frac{\drie^{2}(x_{0},x)}{4K_{\alpha}}\Big)^{\frac{1}{2-\alpha }}.
\end{align*}
\end{corollary}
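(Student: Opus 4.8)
The plan is to derive Corollary~\ref{corv} as a direct specialization of Theorem~\ref{log}, using the representation \eqref{prep} for $p_{\alpha}$ together with Varadhan's formula \eqref{vv} for the diffusive density $p_{1}$ and the known short-time asymptotics \eqref{l}--\eqref{ldef} for the inverse-subordinator density $l_{\alpha}(z)$. First I would identify $F_{1}(s):=p_{1}(x,s\,|\,x_{0},0)$ and $l(z):=l_{\alpha}(z)$, and observe that the integral $F_{\alpha}(t)$ in Theorem~\ref{log} is exactly $p_{\alpha}(x,t\,|\,x_{0},0)$ by \eqref{prep} and the explicit form of $q_{\alpha}(s,t)$.

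Next I would verify the hypotheses of Theorem~\ref{log}. For $F_{1}$: since $x\neq x_{0}$, Varadhan's formula \eqref{vv} gives $\lim_{s\to0+}s\ln p_{1}(x,s\,|\,x_{0},0)=-C_{1}$ with $C_{1}:=\drie^{2}(x_{0},x)/(4K_{\alpha})>0$; boundedness and positivity of $p_{1}$ on $(0,\infty)$ follow from standard parabolic regularity under the stated conditions on $b$ and $\sigma\sigma^{\top}$ (uniformly elliptic, bounded, Lipschitz coefficients). For $l$: from \eqref{l}--\eqref{ldef}, $\lim_{z\to0+}z^{\theta}\ln l_{\alpha}(z)=-\kappa$ with $\theta=\alpha/(1-\alpha)$ and $\kappa=(1-\alpha)\alpha^{\alpha/(1-\alpha)}$, and $\int_0^\infty l_\alpha(z)\,\dd z=1<\infty$ since $l_\alpha$ is a probability density.

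Then it is a matter of substituting these values into the formulas \eqref{bcdef} for $\beta$ and $C$. For $\beta$: with $\theta=\alpha/(1-\alpha)$ one computes $\alpha+\theta=\alpha/(1-\alpha)$, hence $\beta=\alpha\theta/(\alpha+\theta)=\alpha\cdot\frac{\alpha}{1-\alpha}\big/\frac{\alpha}{1-\alpha}=\alpha$? — I must be careful here: actually $\alpha+\theta=\alpha+\frac{\alpha}{1-\alpha}=\frac{\alpha(2-\alpha)}{1-\alpha}$, so $\beta=\frac{\alpha\cdot\frac{\alpha}{1-\alpha}}{\frac{\alpha(2-\alpha)}{1-\alpha}}=\frac{\alpha}{2-\alpha}$, matching the claimed exponent. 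For $C$: the exponent $\frac{\alpha}{\alpha+\theta}=\frac{1-\alpha}{2-\alpha}$ and the prefactor $\frac{\alpha+\theta}{\theta}=2-\alpha$; substituting $\kappa\theta/\alpha=\alpha^{\alpha/(1-\alpha)}$ and simplifying $C_1^{1-\alpha/(\alpha+\theta)}=C_1^{1/(2-\alpha)}$ should yield $C=(2-\alpha)\alpha^{\alpha/(2-\alpha)}\big(\drie^2(x_0,x)/(4K_\alpha)\big)^{1/(2-\alpha)}$, which is the stated constant. I would present this algebra compactly rather than in full.

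The only genuine obstacle is confirming the regularity hypotheses on $p_{1}$: Theorem~\ref{log} requires $F_{1}$ to be a bounded positive function on $(0,\infty)$, and while positivity is immediate from the strong maximum principle for a nondegenerate diffusion with the stated coefficient bounds, uniform boundedness in $s$ for fixed $x\neq x_0$ requires a short argument (the Gaussian-type upper bound $p_1(x,s\,|\,x_0,0)\le c\,s^{-d/2}e^{-c'|x-x_0|^2/s}$ valid under uniform ellipticity and Lipschitz continuity of the coefficients, which stays bounded as $s\to\infty$ after noting drift boundedness, and is bounded near $s=0$ by the exponential decay). I would cite the relevant Aronson-type heat-kernel estimate and note that everything else is a direct application of Theorem~\ref{log} with the explicit parameter identifications above.
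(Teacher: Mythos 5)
Your proposal is correct and follows exactly the paper's own route: apply Theorem~\ref{log} with $F_{1}(s)=p_{1}(x,s\,|\,x_{0},0)$ and $l(z)=l_{\alpha}(z)$, using the representation \eqref{prep}, Varadhan's formula \eqref{vv}, and the constants $\theta,\kappa$ from \eqref{ldef}; your algebra for $\beta$ and $C$ checks out. Your extra care in justifying the boundedness hypothesis on $p_{1}$ via Aronson-type Gaussian bounds is a point the paper leaves implicit, but it does not change the argument.
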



While we stated Varadhan's formula in \eqref{vv} for the case of diffusion in $\R^{d}$ with space-dependent drift and diffusivities, Varadhan's formula for diffusion has been extended to much more general scenarios (see, for example, \cite{norris1997}), including scenarios in which fractional FPEs have not been studied. Nevertheless, given a stochastic process $\{X_{1}(s)\}_{s\ge0}$, we can always define $X_{\alpha}(t):=X_{1}(S_{\alpha}(t))$. Hence, if the probability density of $X_{1}$ satisfies Varadhan's formula, then our analysis above yields the short-time behavior of the probability density of $X_{\alpha}$. The following corollary summarizes this point.

\begin{corollary}\label{corg}
Let $\{X_{1}(s)\}_{s\ge0}$ be a stochastic process and assume its probability density $p_{1}(x,s\,|\,x_{0},0)$ is a bounded function of $s\ge0$ and satisfies
\begin{align*}
\lim_{s\to0+}s\ln p_{1}(x,s\,|\,x_{0},0)=-C_{1}<0,
\end{align*}
for some constant $C>0$. Let $\{U_{\alpha}(t)\}_{t\ge0}$ be an $\alpha$-stable subordinator as in \eqref{laplacetransform} with inverse $\{S_{\alpha}(t)\}_{t\ge0}$ in \eqref{S}. Then, the probability density of $X_{\alpha}(t):=X_{1}(S_{\alpha}(t))$ satisfies
\begin{align*}
\lim_{t\to0+}t^{\beta}\ln p_{\alpha}(x,t\,|\,x_{0},0)=-C<0,
\end{align*}
where $\beta$ and $C$ are in \eqref{baca}.

If we additionally assume that
\begin{align*}
p_{1}(x,s\,|\,x_{0},0)
\sim A_{1}s^{p_{1}}e^{-C_{1}/s}\quad s\to0+,
\end{align*}
for some $A_{1}>0$, $p_{1}\in\R$, and $C_{1}>0$, then
\begin{align*}
p_{\alpha}(x,t\,|\,x_{0},0)
\sim At^{p}e^{-C/t^{\beta}}\quad t\to0+,
\end{align*}
where $\beta$, $C$, $A$, and $p$ are in \eqref{baca} and \eqref{ap}.
\end{corollary}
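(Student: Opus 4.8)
The plan is to observe that $p_{\alpha}(x,t\,|\,x_{0},0)$ is an integral of exactly the shape treated in Theorems~\ref{log} and \ref{more}, to check the two sets of hypotheses, and then to reduce the general constants appearing there to the stated ones (this parallels the proof of Corollary~\ref{corlog}, with $p_{1}(x,s\,|\,x_{0},0)$ in the role of $\P(\sigma\le s)$). First I would record the representation that makes this work. Although \eqref{prep} was derived for $X_{1}$ solving the It\^{o} SDE \eqref{sde}, its proof uses only that $X_{\alpha}(t)=X_{1}(S_{\alpha}(t))$, that $X_{1}$ is independent of the inverse subordinator $S_{\alpha}$, and that $S_{\alpha}(t)$ has density $q_{\alpha}(s,t)=\frac{t}{\alpha s^{1+1/\alpha}}\,l_{\alpha}(t/s^{1/\alpha})$ (a consequence of \eqref{S}, \eqref{laplacetransform}, and the self-similarity of $U_{\alpha}$). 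Conditioning on the value of $S_{\alpha}(t)$ therefore gives, for a general process $X_{1}$,
\begin{align*}
p_{\alpha}(x,t\,|\,x_{0},0)=\int_{0}^{\infty}\frac{t}{\alpha s^{1+1/\alpha}}\,l_{\alpha}\!\Big(\frac{t}{s^{1/\alpha}}\Big)\,p_{1}(x,s\,|\,x_{0},0)\,\dd s,
\end{align*}
which is precisely the integral $F_{\alpha}(t)$ of Theorems~\ref{log} and \ref{more} with $F_{1}(s):=p_{1}(x,s\,|\,x_{0},0)$ and $l(z):=l_{\alpha}(z)$.

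Next I would verify the hypotheses for the logarithmic statement. The function $F_{1}$ is positive and, by hypothesis, bounded and satisfies \eqref{logF} with the given $C_{1}>0$. The function $l_{\alpha}$ is positive with $\int_{0}^{\infty}l_{\alpha}(z)\,\dd z=1<\infty$, and \eqref{l}--\eqref{ldef} give $l_{\alpha}(z)\sim Bz^{-\xi}e^{-\kappa/z^{\theta}}$ as $z\to0+$ with $\theta=\alpha/(1-\alpha)$ and $\kappa=(1-\alpha)\alpha^{\alpha/(1-\alpha)}$; taking logarithms and multiplying by $z^{\theta}$, and using $z^{\theta}\to0$ and $z^{\theta}\ln z\to0$, yields \eqref{logl}. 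Theorem~\ref{log} then gives $\lim_{t\to0+}t^{\beta}\ln p_{\alpha}(x,t\,|\,x_{0},0)=-C$ with $\beta$ and $C$ as in \eqref{bcdef}. Substituting $\theta=\alpha/(1-\alpha)$ and $\kappa=(1-\alpha)\alpha^{\alpha/(1-\alpha)}$ and simplifying --- using $\alpha+\theta=\alpha(2-\alpha)/(1-\alpha)$ and $\kappa\theta=\alpha^{1/(1-\alpha)}$ --- recovers $\beta=\alpha/(2-\alpha)$ and $C=(2-\alpha)\alpha^{\alpha/(2-\alpha)}C_{1}^{1/(2-\alpha)}$, i.e.\ the constants in \eqref{baca}.

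For the refined statement I would add the hypothesis $p_{1}(x,s\,|\,x_{0},0)\sim A_{1}s^{p_{1}}e^{-C_{1}/s}$ and note that the remaining hypothesis of Theorem~\ref{more} on $l$ is exactly \eqref{l}, with $\xi=(2-\alpha)/(2(1-\alpha))$ and $B=\sqrt{\alpha^{1/(1-\alpha)}/(2\pi(1-\alpha))}$ as in \eqref{ldef}. Theorem~\ref{more} then yields $p_{\alpha}(x,t\,|\,x_{0},0)\sim At^{p}e^{-C/t^{\beta}}$ with $A$ and $p$ as in \eqref{complicated2}, and it remains only to substitute the above values of $\theta,\kappa,\xi,B$ into \eqref{complicated2} and simplify to the form \eqref{ap}; in particular one checks that the exponent $\tfrac{\alpha+2\alpha p_{1}+2\xi-2}{2(\alpha+\theta)}$ in \eqref{complicated2} equals $\tfrac12+\tfrac{p_{1}(1-\alpha)}{2-\alpha}$, that the radical there combines with $B$, and that the power $p$ collapses to $\beta p_{1}$.

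The only genuine work is this last bookkeeping step: there is no conceptual obstacle, but one must be careful when collecting the fractional powers of $\alpha$ and $C_{1}$ so that the messy prefactor in \eqref{complicated2} reduces to $A=A_{1}\big(\alpha^{-\alpha/(2-\alpha)}C_{1}^{(1-\alpha)/(2-\alpha)}\big)^{p_{1}}/\sqrt{\alpha(2-\alpha)}$. Everything else is an immediate invocation of Theorems~\ref{log} and \ref{more} via the representation above.
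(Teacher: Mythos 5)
Your proposal is correct and follows essentially the same route as the paper: apply Theorems~\ref{log} and \ref{more} with $F_{1}(s)=p_{1}(x,s\,|\,x_{0},0)$ and $l(z)=l_{\alpha}(z)$ via the conditioning representation \eqref{prep}, then substitute the values of $\theta,\kappa,B,\xi$ from \eqref{ldef} and simplify to \eqref{baca} and \eqref{ap}. Your algebraic reductions (e.g.\ $\alpha+\theta=\alpha(2-\alpha)/(1-\alpha)$, $\kappa\theta=\alpha^{1/(1-\alpha)}$, and the exponent collapsing to $\tfrac12+\tfrac{p_{1}(1-\alpha)}{2-\alpha}$) all check out.
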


\section{Extreme first passage times of subdiffusion}\label{extreme}

In the previous section, we determined the typical short-time behavior of the distribution of FPTs of single subdiffusive searchers. In this section, we use these short-time behaviors to study extreme FPTs of $N\gg1$ subdiffusive searchers. The results in this section generalize results for extreme FPTs of normal diffusion proven in \cite{lawley2020uni,lawley2020dist}.

\subsection{Leading order moments}

We begin by finding the leading order behavior of the $m$th moment of the $k$th fastest subdiffusive FPT, $T_{k,N}$, defined in \eqref{tkn}. This leading order moment formula only requires the logarithmic short-time behavior of a single subdiffusive FPT (which, by Corollary~\ref{corlog}, only requires the logarithmic short-time behavior of a single diffusive FPT).

\begin{theorem}\label{uni}
Let $\{\tau_{n}\}_{n=1}^{\infty}$ be a sequence of iid realizations of any random variable $\tau$ satisfying
\begin{align}\label{conditionb}
\lim_{t\to0+}t^{\beta}\ln\P(\tau\le t)=-C<0,
\end{align}
for some constants $\beta\in(0,1]$ and $C>0$. Define $T_{N}$ and $T_{k,N}$ as in \eqref{tn}-\eqref{tkn} and assume that
\begin{align}\label{conditiona}
\E[T_{N}]
<\infty\quad\text{for some $N\ge1$}.
\end{align}
Then for any $m\ge1$ and $k\ge1$, we have that
\begin{align}\label{res}
\E[(T_{k,N})^{m}]
\sim\Big(\frac{C}{\ln N}\Big)^{m/\beta}\quad\text{as }N\to\infty.
\end{align}
\end{theorem}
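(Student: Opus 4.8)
The plan is to reduce the statement to the asymptotics of the extreme value $T_N$ (the case $k=1$) and then bootstrap to general $k$ and general moments $m$. First I would record the standard extreme-value identity: since the $\tau_n$ are iid with common survival function $S(t):=\P(\tau>t)$ and distribution function $G(t):=\P(\tau\le t)$, we have $\P(T_N>t)=(1-G(t))^N$. The hypothesis \eqref{conditionb} says $\ln G(t)\sim -C t^{-\beta}$ as $t\to0+$, so for small $t$, $G(t)=\exp(-Ct^{-\beta}(1+o(1)))$, which is superpolynomially small. The natural scaling is to set $t=t_N$ with $C t_N^{-\beta}=\ln N$, i.e.\ $t_N=(C/\ln N)^{1/\beta}$, which is exactly the claimed leading order. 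I would show $T_N/t_N\to1$ in probability, and then upgrade this to convergence of all moments using the integrability hypothesis \eqref{conditiona}.

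The key steps, in order: (1) \emph{Upper bound in probability.} Fix $\eps>0$. Evaluate $\P(T_N>(1+\eps)t_N)=(1-G((1+\eps)t_N))^N$. Using \eqref{conditionb}, $\ln G((1+\eps)t_N)\sim -C((1+\eps)t_N)^{-\beta}=-(1+\eps)^{-\beta}\ln N$, so $G((1+\eps)t_N)=N^{-(1+\eps)^{-\beta}+o(1)}\to0$, hence $\P(T_N>(1+\eps)t_N)=(1-N^{-(1+\eps)^{-\beta}+o(1)})^N\to0$ because $(1+\eps)^{-\beta}<1$ makes $N\cdot G((1+\eps)t_N)\to\infty$. (2) \emph{Lower bound in probability.} Similarly $\P(T_N\le(1-\eps)t_N)\le N\,G((1-\eps)t_N)$ by a union bound, and $G((1-\eps)t_N)=N^{-(1-\eps)^{-\beta}+o(1)}$ with $(1-\eps)^{-\beta}>1$, so $N\,G((1-\eps)t_N)\to0$. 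Together (1)–(2) give $T_N/t_N\to1$ in probability. (3) \emph{From $T_N$ to $T_{k,N}$.} For fixed $k$, $T_{k,N}$ is sandwiched: $T_N=T_{1,N}\le T_{k,N}$, and on the other side one can bound $T_{k,N}$ by the minimum over an independent sub-collection — e.g.\ partition the $N$ searchers into $k$ groups of size $\lfloor N/k\rfloor$ and note $T_{k,N}\le\max$ of the $k$ group-minima, each of which has the same law as $T_{\lfloor N/k\rfloor}$; since $t_{\lfloor N/k\rfloor}/t_N\to1$, the squeeze gives $T_{k,N}/t_N\to1$ in probability as well. (4) \emph{Moment convergence.} Raise to the $m$th power: $(T_{k,N}/t_N)^m\to1$ in probability, and $t_N^m=(C/\ln N)^{m/\beta}$. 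To pass to $\E[(T_{k,N})^m]\sim t_N^m$ it suffices to show $\{(T_{k,N}/t_N)^m\}_N$ is uniformly integrable. Here is where \eqref{conditiona} enters: for some fixed $N_0$, $\E[T_{N_0}]<\infty$; since $T_{N}\le T_{N_0}$ for $N\ge N_0$ and similarly $T_{k,N}\le T_{N-k+1}\le T_{N_0}$ eventually (any $k$ of the first $N_0$ variables dominates), we get a fixed integrable dominating envelope for $T_{k,N}$; but we need moments $m\ge1$, so I would instead argue that $\E[T_N^m]<\infty$ for all $N$ large once $\E[T_{N_0}]<\infty$ — this is the one genuinely delicate point, discussed next — and then $(T_{k,N})^m\le (T_{N_0'})^m$ for suitable fixed $N_0'$ provides the uniform integrable bound, yielding $\E[(T_{k,N})^m]=\E[(t_N)^m(T_{k,N}/t_N)^m]\to t_N^m\cdot1$ in the sense that the ratio tends to $1$.

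The main obstacle is step (4)'s integrability bookkeeping: the hypothesis only gives \emph{one} finite first moment $\E[T_N]<\infty$, whereas we need \emph{every} moment of $T_{k,N}$ to be finite (so the limit in \eqref{res} is meaningful) and a uniform-in-$N$ integrability bound. The resolution I would pursue: \eqref{conditionb} forces $G(t)$ to decay superpolynomially at $0$, so the only obstruction to finite moments of $T_N$ is the \emph{large-}$t$ behavior of $\tau$; but $\E[T_N]<\infty$ for one $N$ already tells us $\P(\tau>t)=\P(T_1>t)$ decays fast enough that $\P(T_N>t)=\P(\tau>t)^N$ is integrable, and since $\P(T_{N'}>t)=\P(\tau>t)^{N'}\le\P(\tau>t)^N$ for $N'\ge N$, raising the exponent only helps — one shows $\int_0^\infty t^{m-1}\P(\tau>t)^{N'}\,\dd t<\infty$ for $N'$ large enough depending on $m$, because each extra factor of $\P(\tau>t)<1$ suppresses the tail geometrically (if $\P(\tau>t)\le c<1$ for $t\ge t_1$ one can extract as many powers as needed to beat any polynomial $t^{m-1}$, using only that $\P(\tau>t)\to0$). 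This gives $\E[(T_{N'})^m]<\infty$ for all large $N'$, hence finiteness and a uniform integrable majorant for $(T_{k,N})^m$, closing the argument. The rest is the elementary $(1\pm\eps)$-squeeze of steps (1)–(3), which is routine given \eqref{conditionb}.
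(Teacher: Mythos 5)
Your steps (1)--(3) are fine, and your route is necessarily different from the paper's, which simply cites Theorem~1 of \cite{lawley2020uni} (the $\beta=1$ case) and applies the change of variables $t\mapsto t^{1/\beta}$, using that order statistics commute with the increasing map $\tau\mapsto\tau^{\beta}$. The problem is step (4). A fixed integrable majorant $(T_{k,N})^{m}\le Z$ gives uniform integrability of $(T_{k,N})^{m}$, but what you need is uniform integrability of the \emph{normalized} sequence $Y_{N}:=(T_{k,N}/t_{N})^{m}$, and a bound of the form $Y_{N}\le Z/t_{N}^{m}$ with $t_{N}\to0$ gives nothing: $\E[Y_{N}\mathbf{1}(Y_{N}>K)]\le t_{N}^{-m}\,\E[Z\mathbf{1}(Z>Kt_{N}^{m})]$ and the prefactor blows up. Indeed, $Y_{N}\to1$ in probability together with such a domination does not imply $\E[Y_{N}]\to1$ (take $Y_{N}=1$ with probability $1-t_{N}^{m}$ and $Y_{N}=2t_{N}^{-m}$ otherwise; then $\E[Y_{N}]\to3$). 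What actually closes the argument is a quantitative tail estimate: writing $\E[(T_{k,N})^{m}]=m\int_{0}^{\infty}t^{m-1}\P(T_{k,N}>t)\,\dd t$ and splitting at $(1\pm\eps)t_{N}$, one must show that $\int_{(1+\eps)t_{N}}^{\infty}t^{m-1}\P(T_{k,N}>t)\,\dd t=o(t_{N}^{m})=o((\ln N)^{-m/\beta})$. This follows from $\P(T_{k,N}>t)\le\binom{N}{k-1}\P(\tau>t)^{N-k+1}$, peeling off a fixed number $M$ of factors so that $\int t^{m-1}\P(\tau>t)^{M}\,\dd t<\infty$, and observing that the remaining factor $\binom{N}{k-1}\P(\tau>(1+\eps)t_{N})^{N-k+1-M}\le N^{k-1}\exp\big(-cN^{1-(1+\eps)^{-\beta}+o(1)}\big)$ vanishes faster than any power of $\ln N$. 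This superpolynomial-in-$N$ smallness of the tail is the ingredient your uniform-integrability framing never invokes.

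Two smaller slips in the same step. First, for $k\ge2$ the claimed inequality $T_{k,N}\le T_{N-k+1}$ (a minimum over a subcollection) is false: the second smallest of three values can exceed the minimum of two of them. The correct monotone bound is $T_{k,N}\le T_{k,N_{0}'}$ for $N\ge N_{0}'\ge k$, together with $\P(T_{k,N_{0}'}>t)\le\binom{N_{0}'}{k-1}\P(\tau>t)^{N_{0}'-k+1}$. Second, ``$\P(\tau>t)\to0$'' alone does not let powers of $\P(\tau>t)$ beat a polynomial (consider $\P(\tau>t)=1/\ln t$); the correct route uses monotonicity, $(t/2)\P(\tau>t)^{N_{0}}\le\int_{t/2}^{t}\P(\tau>s)^{N_{0}}\,\dd s\to0$, hence $\P(\tau>t)=o(t^{-1/N_{0}})$, which does follow from $\E[T_{N_{0}}]<\infty$ and then yields $\E[(T_{k,N'})^{m}]<\infty$ for all sufficiently large $N'$.
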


Theorem~\ref{uni} describes how extreme subdiffusive FPTs vanish as $N\to\infty$. As an immediate corollary, we obtain that extreme FPTs are much less variable than single FPTs.

\begin{corollary}\label{corcv}
Under the assumptions of Theorem~\ref{log}, the variance of $T_{k,N}$,
\begin{align*}
\textup{Variance}[T_{k,N}]
:=\E\Big[\big(T_{k,N}-\E[T_{k,N}]\big)^{2}\Big],
\end{align*}
vanishes faster than $(\ln N)^{-2/\beta}$ as $N\to\infty$. That is,
\begin{align*}
(\ln N)^{2/\beta}\textup{Variance}[T_{k,N}]
\to0\quad\text{as }N\to\infty.
\end{align*}
Furthermore, the coefficient of variation of $T_{k,N}$, vanishes as $N\to\infty$. That is,
\begin{align*}
\frac{\sqrt{\textup{Variance}[T_{k,N}]}}{\E[T_{k,N}]}
\to0\quad\text{as }N\to\infty.
\end{align*}
\end{corollary}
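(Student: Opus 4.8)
The plan is to deduce Corollary~\ref{corcv} from Theorem~\ref{uni} by applying that theorem with two different exponents, $m=1$ and $m=2$. First I would apply Theorem~\ref{uni} with $m=2$ to get $\E[(T_{k,N})^{2}]\sim (C/\ln N)^{2/\beta}$, and with $m=1$ to get $\E[T_{k,N}]\sim (C/\ln N)^{1/\beta}$, so that $(\E[T_{k,N}])^{2}\sim (C/\ln N)^{2/\beta}$ as well. Writing $\textup{Variance}[T_{k,N}]=\E[(T_{k,N})^{2}]-(\E[T_{k,N}])^{2}$, both terms are asymptotic to the \emph{same} quantity $(C/\ln N)^{2/\beta}$, so their difference is $o\big((C/\ln N)^{2/\beta}\big)$, which gives exactly $(\ln N)^{2/\beta}\textup{Variance}[T_{k,N}]\to 0$ after absorbing the constant $C^{2/\beta}$.

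For the coefficient of variation, I would combine the two displays just obtained: $\sqrt{\textup{Variance}[T_{k,N}]}$ is $o\big((\ln N)^{-1/\beta}\big)$ by taking square roots in the variance estimate, while $\E[T_{k,N}]\sim (C/\ln N)^{1/\beta}$ is bounded below by a positive constant times $(\ln N)^{-1/\beta}$ for large $N$. Dividing, the ratio is $o(1)$, i.e.\ $\sqrt{\textup{Variance}[T_{k,N}]}/\E[T_{k,N}]\to 0$. One small bookkeeping point to state carefully: the hypothesis \eqref{conditiona} that $\E[T_{N}]<\infty$ for some $N$ should be upgraded to note that $\E[(T_{k,N})^{m}]<\infty$ for all $N$ large enough and all $m,k$ — this is implicit in Theorem~\ref{uni} being applicable (the finiteness of all moments of $T_{k,N}$ for large $N$ follows from the super-exponential decay of $\P(\tau\le t)$ near $0$ encoded in \eqref{conditionb}, together with monotonicity in $N$), so that the expressions $\E[(T_{k,N})^{2}]$ and the variance are genuinely finite.

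The only genuine subtlety — and the step I'd flag as the "main obstacle," though it is mild — is that knowing $\E[(T_{k,N})^{2}]$ and $(\E[T_{k,N}])^{2}$ are each $\sim (C/\ln N)^{2/\beta}$ does \emph{not} by itself pin down the rate at which their difference vanishes; asymptotic equivalence of two sequences only says the difference is $o$ of their common order, which is precisely all that is claimed, so there is nothing more to prove. If one instead wanted a sharp \emph{rate} for the variance (not asked here), one would need the finer multi-term expansions of moments referenced later in the paper rather than just Theorem~\ref{uni}. I would also remark that the corollary's statement "Under the assumptions of Theorem~\ref{log}" should read "Under the assumptions of Theorem~\ref{uni}," since it is Theorem~\ref{uni} whose hypotheses \eqref{conditionb}--\eqref{conditiona} are actually invoked. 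With these observations in place the proof is three or four lines of elementary asymptotics.
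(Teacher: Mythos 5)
Your proof is correct and follows essentially the same route as the paper's: apply Theorem~\ref{uni} with $m=1$ and $m=2$ so that $(\ln N)^{2/\beta}\E[T_{k,N}^{2}]$ and $(\ln N)^{2/\beta}(\E[T_{k,N}])^{2}$ converge to the common limit $C^{2/\beta}$, forcing the scaled variance to zero, and then combine the resulting bound $(\ln N)^{1/\beta}\sqrt{\textup{Variance}[T_{k,N}]}<\eps$ with the lower bound $\E[T_{k,N}]\ge\tfrac{1}{2}(C/\ln N)^{1/\beta}$ to kill the coefficient of variation. Your side remark that ``Under the assumptions of Theorem~\ref{log}'' should read ``Under the assumptions of Theorem~\ref{uni}'' is also well taken, since the proof invokes only the hypotheses \eqref{conditionb}--\eqref{conditiona} of that theorem.
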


Theorem~\ref{uni} assumes that $\E[{T_{N}}]<\infty$ for some $N\ge1$. While it is typically the case that a single subdiffusive FPT $\tau$ has infinite mean \cite{yuste2004}, the next result shows that $\E[{T_{N}}]<\infty$ for sufficiently large $N$ as long as the survival probability of the corresponding normal diffusive FPT vanishes no slower than algebraically at large time. In this paper, the notation
\begin{align*}
f(t)
=\O(g(t))\quad\text{as }t\to\infty,
\end{align*}
means that there exists $M>0$ and $t_{0}>0$ so that
\begin{align*}
|f(t)|
\le Mg(t)\quad\text{for all }t\ge t_{0}.
\end{align*}

\begin{theorem}\label{nice}
Let ${\sigma}\ge0$ be a nonnegative random variable and assume that there exists an $r>0$ so that
\begin{align}
\P({\sigma}>s)
=\O(s^{-r})\quad\text{as }s\to\infty.\label{asn}
\end{align}
Let $\{U_{\alpha}(t)\}_{t\ge0}$ be an $\alpha$-stable subordinator as in \eqref{laplacetransform}. If $U_{\alpha}$ and ${\sigma}$ are independent, then
\begin{align}\label{fr72}
\P(U_{\alpha}({\sigma})>t)
=\O(t^{-\alpha r/(1+r)})\quad\text{as }t\to\infty.
\end{align}
Furthermore, if ${T_{N}}$ is the minimum of $N\ge1$ iid realizations of $U_{\alpha}({\sigma})$, then
\begin{align}\label{fr73}
\E[{T_{N}}]
<\infty\quad\text{for all }N>\frac{1+r}{\alpha r}.
\end{align}
\end{theorem}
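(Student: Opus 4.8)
The plan is to prove the two tail estimates \eqref{fr72} and \eqref{fr73} in order, using the stochastic representation $\tau = U_\alpha(\sigma)$ and conditioning on $\sigma$. For \eqref{fr72}, I would split the event $\{U_\alpha(\sigma) > t\}$ according to whether $\sigma$ is large or small, with a threshold $s_0 = s_0(t)$ to be optimized. On the one hand, $\P(\sigma > s_0) = \O(s_0^{-r})$ by assumption \eqref{asn}. On the other hand, on the event $\{\sigma \le s_0\}$, since $U_\alpha$ is nondecreasing we have $U_\alpha(\sigma) \le U_\alpha(s_0)$ on that event, so by independence
\begin{align*}
\P(U_\alpha(\sigma) > t,\, \sigma \le s_0)
\le \P(U_\alpha(s_0) > t).
\end{align*}
The key input here is a good upper tail bound for the $\alpha$-stable subordinator $U_\alpha(s_0)$. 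Using the self-similarity $U_\alpha(s_0) \stackrel{d}{=} s_0^{1/\alpha} U_\alpha(1)$ together with the classical fact that the $\alpha$-stable subordinator has a heavy (power-law) tail $\P(U_\alpha(1) > u) = \O(u^{-\alpha})$ as $u \to \infty$, we get $\P(U_\alpha(s_0) > t) = \O(s_0 t^{-\alpha})$. Combining,
\begin{align*}
\P(U_\alpha(\sigma) > t)
= \O\big(s_0^{-r} + s_0 t^{-\alpha}\big).
\end{align*}
Optimizing over $s_0$ — balancing the two terms by taking $s_0 = t^{\alpha/(1+r)}$ — yields $\P(U_\alpha(\sigma) > t) = \O(t^{-\alpha r/(1+r)})$, which is exactly \eqref{fr72}.

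For \eqref{fr73}, I would use the standard fact that for any nonnegative random variable $Y$, $\E[Y] = \int_0^\infty \P(Y > t)\,\dd t$, applied to $Y = T_N = \min\{\tau_1,\dots,\tau_N\}$ where the $\tau_n$ are iid copies of $U_\alpha(\sigma)$. By independence, $\P(T_N > t) = \big(\P(U_\alpha(\sigma) > t)\big)^N$. Fix $M>0$ and $t_0>0$ from the $\O$-bound in \eqref{fr72}, so that $\P(U_\alpha(\sigma) > t) \le M t^{-\alpha r/(1+r)}$ for $t \ge t_0$; for $t \ge \max\{t_0, (2M)^{(1+r)/(\alpha r)}\}$ this is at most $\tfrac12$ (or I can just keep the constant), and in any case $\P(T_N > t) \le M^N t^{-N\alpha r/(1+r)}$ for $t \ge t_0$. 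Splitting the integral for $\E[T_N]$ at $t_0$, the part over $[0,t_0]$ is trivially bounded by $t_0 < \infty$, and the part over $[t_0,\infty)$ is
\begin{align*}
\int_{t_0}^\infty \P(T_N > t)\,\dd t
\le M^N \int_{t_0}^\infty t^{-N\alpha r/(1+r)}\,\dd t,
\end{align*}
which converges precisely when $N\alpha r/(1+r) > 1$, i.e.\ when $N > (1+r)/(\alpha r)$, giving \eqref{fr73}.

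The main obstacle — really the only nontrivial point — is establishing the power-law tail $\P(U_\alpha(1) > u) = \O(u^{-\alpha})$ for the $\alpha$-stable subordinator, and more precisely making sure the self-similarity scaling is handled correctly so the constant in $\P(U_\alpha(s_0) > t) = \O(s_0 t^{-\alpha})$ is uniform. This tail bound is classical: it follows either from the known asymptotics of the density $l_\alpha(z)$ as $z \to \infty$ (a complement to the small-$z$ behavior in \eqref{l}), or more directly from a Markov-type / Tauberian argument using the Laplace transform \eqref{laplacetransform}, since $\E[e^{-rU_\alpha(1)}] = e^{-r^\alpha} = 1 - r^\alpha + \O(r^{2\alpha})$ for small $r$ identifies the tail index as $\alpha$. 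A clean self-contained route is: for any $r>0$, $\P(U_\alpha(1) > u) \le e^{ru}\E[e^{-rU_\alpha(1)}]\cdot$ — wait, that is the wrong direction for an upper tail; instead use $\P(U_\alpha(1) > u) = \P(e^{-rU_\alpha(1)} < e^{-ru}) \le \frac{\E[1 - e^{-rU_\alpha(1)}]}{1 - e^{-ru}} = \frac{1 - e^{-r^\alpha}}{1-e^{-ru}}$, and then choose $r = 1/u$ to get $\P(U_\alpha(1) > u) \le \frac{1 - e^{-u^{-\alpha}}}{1 - e^{-1}} \le \frac{u^{-\alpha}}{1-e^{-1}}$, which is the desired $\O(u^{-\alpha})$ bound with an explicit constant. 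Everything else is bookkeeping with the $\O$-notation and the optimization of the split-point $s_0$.
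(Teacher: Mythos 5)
Your proof is correct, and while the overall strategy coincides with the paper's (split the tail of $U_{\alpha}(\sigma)$ into a large-$\sigma$ regime and a small-$\sigma$ regime at the same threshold $s_{0}=t^{\alpha/(1+r)}$, then balance the two power laws), the implementation differs in two worthwhile ways. The paper works with the exact integral identity $\P(U_{\alpha}(\sigma)>t)=\int_{0}^{\infty}l_{\alpha}(z)\P(\sigma>(t/z)^{\alpha})\,\dd z$, splits the integral at $z=t^{p}$ with $p=r/(1+r)$, and bounds the second piece by $\int_{t^{p}}^{\infty}l_{\alpha}(z)\,\dd z=\O(t^{-p\alpha})$ using the cited large-$z$ asymptotics $l_{\alpha}(z)\sim\frac{\alpha}{\Gamma(1-\alpha)}z^{-(1+\alpha)}$; you instead decompose at the level of events, use pathwise monotonicity of the subordinator to reduce to $\P(U_{\alpha}(s_{0})>t)=\P(U_{\alpha}(1)>ts_{0}^{-1/\alpha})$ (note $ts_{0}^{-1/\alpha}=t^{p}$, so the two second terms are literally the same quantity), and then prove the tail bound $\P(U_{\alpha}(1)>u)\le\frac{1-e^{-u^{-\alpha}}}{1-e^{-1}}\le\frac{u^{-\alpha}}{1-e^{-1}}$ directly from the Laplace transform \eqref{laplacetransform} via Markov's inequality applied to $1-e^{-rU_{\alpha}(1)}$ with $r=1/u$. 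Your route is more self-contained (it needs no density asymptotics for $l_{\alpha}$, only the Laplace transform) and yields an explicit constant valid for all $u>0$, which makes the uniformity of the $\O$-constants under the self-similar rescaling transparent — the one point you rightly flagged as the crux. The deduction of \eqref{fr73} from \eqref{fr72} via $\E[T_{N}]=\int_{0}^{\infty}(\P(U_{\alpha}(\sigma)>t))^{N}\,\dd t$ is identical to the paper's.
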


We note that for diffusive searchers in a bounded domain, the survival probability of the FPT ${\sigma}$ typically vanishes exponentially,
\begin{align}\label{asnexp}
\P({\sigma}>s)=\O(e^{-\lambda s})\quad\text{as }s\to\infty,
\end{align}
for some $\lambda>0$. Therefore, if \eqref{asnexp} is satisfied, then \eqref{asn} holds for every $r>0$ and Theorem~\ref{nice} implies
\begin{align*}
\E[{T_{N}}]
<\infty\quad\text{for all }N>1/\alpha.
\end{align*}


\subsection{Full distribution}\label{distribution}

In the previous subsection, we found the leading order behavior of the $m$th moment of the $k$th fastest subdiffusive FPT using only the logarithmic short-time behavior of the distribution of a single FPT. In this section, we show that we can find the full limiting distribution of the fastest subdiffusive FPT if we have more detailed information about the short-time distribution of a single subdiffusive FPT (which, by Corollary~\ref{corlog}, only requires the short-time behavior of a single diffusive FPT).

We begin by recalling the definition of the Gumbel distribution.
\begin{definition*}
A random variable $X$ has a \emph{Gumbel distribution} with location parameter $b\in\R$ and scale parameter $a>0$ if\footnote{A Gumbel distribution is sometimes defined differently by saying that if \eqref{xgumbel} holds, then $-X$ has a Gumbel distribution with shape $-b$ and scale $a$.}
\begin{align}\label{xgumbel}
\P(X>x)
=\exp\Big[-\exp\Big(\frac{x-b}{a}\Big)\Big],\quad\text{for all } x\in\R,
\end{align}
in which case we write
\begin{align*}
X=_{\textup{d}}\textup{Gumbel}(b,a).
\end{align*}
\end{definition*}

The following theorem describes the distribution of $T_{N}$ in terms of the Gumbel distribution. In this paper, $\to_{\textup{d}}$ denotes convergence in distribution \cite{billingsley2013}. That is, we write $X_{N}\to_{\dist}X$ as $N\to\infty$ if a sequence of random variables $\{X_{N}\}_{N\ge1}$ converges in distribution to $X$ as $N\to\infty$, which means
\begin{align}\label{cddef00}
\P(X_{N}\le x)
\to\P(X\le x)\quad\text{as }N\to\infty,
\end{align}
for all $x\in\R$ such that the function $F(x):=\P(X\le x)$ is continuous.

\begin{theorem}\label{gumbel}
Let $\{\tau_{n}\}_{n=1}^{\infty}$ be a sequence of iid realizations of any random variable $\tau$ satisfying
\begin{align*}
\P(\tau\le t)
\sim At^{p}e^{-C/t^{\beta}}\quad\text{as }t\to0+,
\end{align*}
for some constants $C>0$, $A>0$, $\beta\in(0,1]$, and $p\in\R$. If $T_{N}:=\min\{\tau_{1},\dots,\tau_{N}\}$, then
\begin{align}\label{cd}
\frac{T_{N}-b_{N}}{a_{N}}
\to_{\textup{d}}
X=_{\textup{d}}\textup{Gumbel}(0,1)\quad\text{as }N\to\infty,
\end{align}
where 
\begin{align}\label{abab}
\begin{split}
a_{N}
&
=\frac{b_{N}}{\beta\ln(AN)},\quad
b_{N}
=\Big(\frac{C}{\ln(AN)}\Big)^{1/\beta},\quad\text{if }p=0,\\
a_{N}
&
=\frac{b_{N}}{p(1+W)},\quad
b_{N}
=\Big(\frac{C\beta}{pW}\Big)^{1/\beta},\quad\text{if }p\neq0,
\end{split}
\end{align}
and
\begin{align}\label{dubdub}
W
&=
\begin{cases}
W_{0}\big((C\beta/p)(AN)^{\beta/p}\big) & \text{if }p>0,\\
W_{-1}\big((C\beta/p)(AN)^{\beta/p}\big) & \text{if }p<0,
\end{cases}
\end{align}
where $W_{0}(z)$ is the principal branch of the LambertW function and $W_{-1}(z)$ is the lower branch \cite{corless1996}. In addition, \eqref{cd} also holds for
\begin{align}\label{ababuf}
a_{N}
&=\frac{C^{1/\beta}}{\beta(\ln N)^{1+1/\beta}},\quad
b_{N}
=\Big(\frac{C}{\ln N}
+\frac{Cp\ln(\ln(N))}{\beta(\ln N)^{2}}
-\frac{C\ln(AC^{p/\beta})}{(\ln N)^{2}}\Big)^{1/\beta}.
\end{align}
\end{theorem}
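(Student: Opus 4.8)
The plan is to reduce the problem to the classical extreme-value theory for minima of iid random variables. Recall that if $\{\tau_n\}$ are iid with common distribution function $F(t) = \P(\tau \le t)$, then for any sequences $a_N > 0$ and $b_N \in \R$ we have
\begin{align*}
\P\Big(\frac{T_N - b_N}{a_N} > x\Big)
= \big(1 - F(b_N + a_N x)\big)^N
= \exp\big[N \ln(1 - F(b_N + a_N x))\big].
\end{align*}
So $\frac{T_N - b_N}{a_N} \to_{\textup d} \textup{Gumbel}(0,1)$ if and only if $N \ln(1 - F(b_N + a_N x)) \to -e^x$ for every $x \in \R$, and since $F(t) \to 0$ as $t \to 0+$ and $b_N + a_N x \to 0+$ (which I will verify), this is equivalent to $N F(b_N + a_N x) \to e^x$. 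Thus the entire theorem comes down to choosing $a_N, b_N$ so that, using the hypothesis $F(t) \sim A t^p e^{-C/t^\beta}$ as $t \to 0+$,
\begin{align*}
N \cdot A (b_N + a_N x)^p \exp\!\big(-C/(b_N + a_N x)^\beta\big) \longrightarrow e^x, \quad x \in \R.
\end{align*}

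First I would establish that $b_N \to 0+$ and $a_N/b_N \to 0$ for each of the three prescriptions in \eqref{abab}--\eqref{ababuf}; this makes $b_N + a_N x = b_N(1 + (a_N/b_N)x) \to 0+$, justifies replacing $F$ by its asymptotic equivalent, and lets me expand $(b_N + a_N x)^{-\beta} = b_N^{-\beta}(1 - \beta (a_N/b_N) x + o(a_N/b_N))$ and $(b_N + a_N x)^p = b_N^p(1 + o(1))$. The key algebraic identity to extract is
\begin{align*}
\beta \, \frac{a_N}{b_N} \cdot \frac{C}{b_N^\beta} \longrightarrow 1 \quad\text{as } N \to \infty,
\end{align*}
together with the normalization $N A b_N^p e^{-C/b_N^\beta} \to 1$; these two conditions are exactly what force the limit $e^x$. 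I would then check that each definition of $(a_N, b_N)$ in the statement satisfies both. For the $p = 0$ case in \eqref{abab} this is immediate: $b_N^\beta = C/\ln(AN)$ gives $N A e^{-C/b_N^\beta} = NA/(AN) = 1$ exactly, and $a_N = b_N/(\beta \ln(AN))$ makes $\beta (a_N/b_N)(C/b_N^\beta) = \beta \cdot \frac{1}{\beta\ln(AN)} \cdot \ln(AN) = 1$ exactly. For the $p \ne 0$ case one uses the defining property $W e^W = (C\beta/p)(AN)^{\beta/p}$ of the Lambert $W$ function to verify the normalization $N A b_N^p e^{-C/b_N^\beta} \to 1$; here $b_N^\beta = C\beta/(pW)$, so $C/b_N^\beta = pW/\beta$ and $b_N^p = (C\beta/(pW))^{p/\beta}$, and substituting reduces the product to a power of $W e^W / [(C\beta/p)(AN)^{\beta/p}] = 1$ — one must be careful about the branch ($W_0$ for $p>0$, $W_{-1}$ for $p<0$) so that $b_N \to 0+$, i.e.\ so that $W \to +\infty$. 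For the third prescription \eqref{ababuf} one does an asymptotic expansion: plug the three-term expression for $b_N^\beta$ into $C/b_N^\beta$, expand $\ln N - (C/b_N^\beta) + p \ln(\ln N) + \ln(AC^{p/\beta})$ and check it tends to $0$, which gives the normalization, while $a_N = C^{1/\beta}/(\beta (\ln N)^{1+1/\beta})$ together with $b_N^\beta \sim C/\ln N$ gives $\beta(a_N/b_N)(C/b_N^\beta) \to 1$.

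The main obstacle — and the place where care is genuinely required rather than just bookkeeping — is the $p \ne 0$ case: one must translate the implicit Lambert $W$ relations into the two limit conditions above \emph{with the correct branch}, confirm that $W_0(z) \to \infty$ as $z \to \infty$ and that $W_{-1}(z) \to \infty$ as $z \to 0-$ (which is the relevant regime when $p < 0$ and the argument $(C\beta/p)(AN)^{\beta/p}$ is a small negative number), and verify monotonicity/positivity so that $a_N > 0$ and $b_N + a_N x > 0$ eventually for each fixed $x$. A secondary technical point is handling the $o(1)$ error in $F(t)/(At^p e^{-C/t^\beta}) \to 1$ uniformly enough: since $b_N + a_N x \to 0+$ at a rate depending on $x$ but the convergence $F(t) \sim At^p e^{-C/t^\beta}$ is along $t \to 0+$, one simply notes that for fixed $x$ the sequence $t = b_N + a_N x$ is an admissible sequence tending to $0+$, so the replacement is legitimate pointwise in $x$, which is all that convergence in distribution requires. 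Finally, I would remark that once \eqref{cd} is proven for the sequences in \eqref{abab}, its validity for the sequences in \eqref{ababuf} follows either by the same direct computation or by the standard fact that Gumbel limits are preserved under asymptotically equivalent norming sequences satisfying $a_N'/a_N \to 1$ and $(b_N' - b_N)/a_N \to 0$.
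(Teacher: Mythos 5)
Your proposal is correct, and it reaches the same pivotal limit as the paper --- namely $N\,\P(\tau\le b_{N}+a_{N}x)\to e^{x}$ for each fixed $x$ --- but by a genuinely different mechanism. The paper does not verify the norming constants by direct substitution; instead it defines $S_{0}(t):=1-At^{p}e^{-C/t^{\beta}}$, checks the von Mises condition $\lim_{t\to0+}\tfrac{\dd}{\dd t}[(1-S_{0}(t))/S_{0}'(t)]=0$ to invoke a domain-of-attraction theorem (Theorem 2.1.2 of Falk et al.), and then takes the canonical norming constants $b_{N}=S_{0}^{-1}(1-1/N)$ and $a_{N}=-1/(NS_{0}'(b_{N}))$ from de Haan--Ferreira, which are inverted via the LambertW function to produce \eqref{abab}; this explains \emph{where} the formulas \eqref{abab} come from, at the cost of leaning on two external results. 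Your route instead takes \eqref{abab} and \eqref{ababuf} as given and verifies directly that they satisfy the two sufficient conditions $NAb_{N}^{p}e^{-C/b_{N}^{\beta}}\to1$ and $\beta(a_{N}/b_{N})(C/b_{N}^{\beta})\to1$; this is more elementary and self-contained (no citation of the von Mises criterion is needed), and your observation that the $p\neq0$ normalization collapses \emph{exactly} to $1$ via $We^{W}=(C\beta/p)(AN)^{\beta/p}$ is a clean way to organize the computation. Both proofs handle the passage from the idealized tail $1-S_{0}$ to the true distribution the same way (pointwise in $x$, the argument $b_{N}+a_{N}x$ is an admissible sequence tending to $0+$), and both obtain \eqref{ababuf} from the invariance of the Gumbel limit under norming sequences with $a_{N}'/a_{N}\to1$ and $(b_{N}'-b_{N})/a_{N}\to0$.

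One small correction to your branch discussion for $p<0$: there $(C\beta/p)(AN)^{\beta/p}\to0^{-}$ and $W_{-1}(z)\to-\infty$ (not $+\infty$) as $z\to0^{-}$; what you actually need is $pW\to+\infty$ so that $b_{N}^{\beta}=C\beta/(pW)\to0^{+}$, and $p(1+W)>0$ so that $a_{N}>0$, both of which do hold since $p<0$ and $W\to-\infty$. This is a sign-level slip, not a gap; the verification $W/(1+W)\to1$ and the exact normalization identity go through on either branch.
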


The following theorem generalizes Theorem~\ref{gumbel} to the $k$th fastest FPT, $T_{k,N}$, defined in \eqref{tkn}.

\begin{theorem}\label{kth}
Under the assumptions of Theorem~\ref{gumbel} with $\{a_{N}\}_{N\ge1}$ and $\{b_{N}\}_{N\ge1}$ given by either \eqref{abab} or \eqref{ababuf}, the following convergence in distribution holds for any fixed $k\ge1$, 
\begin{align}\label{cd1}
\frac{T_{k,N}-b_{N}}{a_{N}}
\to_{\textup{d}}
X_{k}\quad\text{as }N\to\infty,
\end{align}
where the probability density function of $X_{k}$ is
\begin{align}\label{xkpdf}
\frac{\dd }{\dd x}\P(X_{k}\le x)
=\frac{\exp(kx-e^{x})}{(k-1)!},\quad\text{for all } x\in\R.
\end{align}
Furthermore, for any fixed $k\ge1$, the following convergence in distribution holds for the joint random variables,
\begin{align}\label{cd2}
\left(\frac{T_{1,N}-b_{N}}{a_{N}},\dots,\frac{T_{k,N}-b_{N}}{a_{N}}\right)
\to_{\textup{d}}
\mathbf{X}^{(k)}
=(X_{1},\dots,X_{k})\in\R^{k}\quad\text{as }N\to\infty,
\end{align}
where the joint probability density function of $\mathbf{X}^{(k)}\in\R^{k}$ is
\begin{align*}
f_{\mathbf{X}^{(k)}}(x_{1},\dots,x_{k})
&=\begin{cases}
\exp(-e^{x_{k}})\prod_{r=1}^{k}e^{x_{r}} & \text{if $x_{1}\le\dots\le x_{k}$},\\
0 & \text{otherwise}.
\end{cases}
\end{align*}
\end{theorem}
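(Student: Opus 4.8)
The plan is to reduce the statement to the classical extreme value theory for the $k$th order statistic, exactly as one does for Theorem~\ref{gumbel} with $k=1$, and then transfer the limit through the affine rescaling. First I would recall the standard fact (see, e.g., \cite{billingsley2013} or any text on order statistics) that if $\{\tau_n\}$ are iid with continuous distribution function $G(t):=\P(\tau\le t)$, then for any sequences $a_N>0$, $b_N$ such that $N\,G(a_N x+b_N)\to e^{x}$ for every $x\in\R$, one has both the marginal convergence
\begin{align*}
\P\Big(\frac{T_{k,N}-b_N}{a_N}\le x\Big)
\to 1-\sum_{j=0}^{k-1}\frac{e^{-jx'}}{j!}e^{-e^{x'}}\Big|_{x'=x}
=\int_{-\infty}^{x}\frac{e^{ky-e^{y}}}{(k-1)!}\,\dd y
\end{align*}
and the joint convergence of $(T_{1,N},\dots,T_{k,N})$ (suitably rescaled) to the vector $\mathbf X^{(k)}$ with the stated density; this is the classical result that the lower-order statistics, rescaled by the same $a_N,b_N$ that linearize the minimum, converge jointly to the points of a Poisson process with intensity $e^{y}\,\dd y$ on $\R$, and $T_{k,N}$ corresponds to the $k$th such point. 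Thus the entire content of the theorem is the verification of the single analytic condition $N\,G(a_N x+b_N)\to e^{x}$ for the two given choices of $(a_N,b_N)$.

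Second, I would observe that this condition is precisely what is already established in the proof of Theorem~\ref{gumbel}: there, $T_N=T_{1,N}$ and \eqref{cd} is equivalent (via $\P((T_N-b_N)/a_N> x)=(1-G(a_Nx+b_N))^N$ and $-\ln(1-u)\sim u$) to $N\,G(a_N x+b_N)\to e^{x}$. So no new estimate is needed; I would simply cite that computation. The mild point to check is that the convergence $N\,G(a_N x+b_N)\to e^{x}$ holds \emph{uniformly on compact $x$-intervals} (which follows automatically once it holds pointwise, since the functions $x\mapsto N\,G(a_N x+b_N)$ are monotone in $x$ and the limit $e^{x}$ is continuous — Dini/Pólya-type argument), because the joint convergence statement \eqref{cd2} requires handling finitely many thresholds simultaneously and passing to the limit in expressions like $N\big(G(a_Nx_{i+1}+b_N)-G(a_Nx_i+b_N)\big)\to e^{x_{i+1}}-e^{x_i}$.

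Concretely, the steps in order: (i) write the event $\{T_{k,N}\le x\}$, equivalently $\{\#\{n:\tau_n\le x\}\ge k\}$, so that $\#\{n\le N:\tau_n\le a_Nx+b_N\}$ is Binomial$(N,G(a_Nx+b_N))$; (ii) invoke the Poisson limit theorem for this Binomial, using $N\,G(a_Nx+b_N)\to e^{x}$ from Theorem~\ref{gumbel}, to get that the count converges in distribution to Poisson$(e^{x})$, which yields \eqref{cd1} and \eqref{xkpdf} after recognizing $\P(\mathrm{Poisson}(e^{x})\ge k)=\int_{-\infty}^{x}e^{ky-e^{y}}/(k-1)!\,\dd y$; (iii) for the joint statement \eqref{cd2}, partition $\R$ by the points $x_1\le\dots\le x_k$ and use that the vector of increments of the empirical counting process over the intervals $(-\infty,x_1],(x_1,x_2],\dots$ converges to independent Poisson variables with the respective means $e^{x_1},e^{x_2}-e^{x_1},\dots$ (multivariate Poisson convergence of a multinomial with small cell probabilities), then translate "the $r$th order statistic lies in $(x_r,x_{r+1}]$ for each $r$" into a finite intersection of count conditions and read off the density by differentiation. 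The main obstacle is purely bookkeeping in step (iii): correctly matching the event $\{T_{r,N}\le x_r \text{ for } r=1,\dots,k\}$ (with $x_1\le\dots\le x_k$) to the Poisson-increment description and verifying the asymptotic independence of increments — there is no hard estimate, only the need to be careful that the same normalizing pair $(a_N,b_N)$ linearizes all $k$ thresholds, which is exactly the uniform-on-compacts upgrade of the pointwise limit already in hand. Everything else is a direct quotation of Theorem~\ref{gumbel} plus standard Poisson-approximation facts.
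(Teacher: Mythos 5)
Your proposal is correct and takes essentially the same route as the paper, whose proof simply invokes the condition $N\,\P(\tau\le a_Nx+b_N)\to e^{x}$ established for Theorem~\ref{gumbel} together with the classical extreme-value theorem for the $k$th order statistic (Theorem 3.5 of \cite{colesbook}); your Poissonization argument is just the standard proof of that cited result written out. (One minor typo: the partial sums in your limiting distribution function should involve $e^{jx'}/j!$ rather than $e^{-jx'}/j!$, as your subsequent integral form $\int_{-\infty}^{x}e^{ky-e^{y}}/(k-1)!\,\dd y$ confirms.)
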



\subsection{Higher order moments}

Under the assumptions of Theorem~\ref{gumbel}, the next theorem gives higher order approximations to the moments of $T_{N}=T_{1,N}$ and $T_{k,N}$.

\begin{theorem}\label{moments}
Under the assumptions of Theorem~\ref{gumbel} with $\{a_{N}\}_{N\ge1}$ and $\{b_{N}\}_{N\ge1}$ given by either \eqref{abab} or \eqref{ababuf}, assume further that $\E[T_{N}]<\infty$ for some $N\ge1$.

Then for each moment $m\in(0,\infty)$, we have that
\begin{align*}
\E\left[\left(\frac{T_{N}-b_{N}}{a_{N}}\right)^{m}\right]
\to\E[X^{m}]\quad\text{as }N\to\infty,\quad\text{where }X=_{\textup{d}}\textup{Gumbel}(0,1),
\end{align*}
and thus
\begin{align*}
\E[(T_{N}-b_{N})^{m}]
=a_{N}^{m}\E[X^{m}]+o(a_{N}^{m}),
\end{align*}
where $f(N)=o(a_{N}^{m})$ means $\lim_{N\to\infty}a_{N}^{-m}f(N)=0$.

Furthermore, if $k\ge1$ and $m\in(0,\infty)$, then
\begin{align*}
\E\left[\left(\frac{T_{k,N}-b_{N}}{a_{N}}\right)^{m}\right]
\to\E[X_{k}^{m}]\quad\text{as }N\to\infty,
\end{align*}
where $X_{k}$ has the probability density function in \eqref{xkpdf}. Therefore,
\begin{align*}
\E[(T_{k,N}-b_{N})^{m}]
=a_{N}^{m}\E[X_{k}^{m}]+o(a_{N}^{m}).
\end{align*}
\end{theorem}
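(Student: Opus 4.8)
The plan is to combine the convergence in distribution established in Theorems~\ref{gumbel} and~\ref{kth} with a uniform integrability argument, so that convergence in distribution upgrades to convergence of moments. Recall the standard fact that if $Y_N \to_{\textup{d}} Y$ and the family $\{|Y_N|^m\}_{N\ge1}$ is uniformly integrable (for a fixed $m>0$), then $\E[|Y_N|^m]\to\E[|Y|^m]$ and $\E[Y_N^m]\to\E[Y^m]$; see, e.g., \cite{billingsley2013}. Here I would take $Y_N := (T_{k,N}-b_N)/a_N$, which converges in distribution to $X_k$ (with $X_1 =_{\textup{d}} \textup{Gumbel}(0,1)$) by Theorems~\ref{gumbel} and~\ref{kth}. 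Since $X_k$ has the explicit density $\exp(kx - e^x)/(k-1)!$, all of its moments $\E[X_k^m]$ are finite (the density decays like $e^{kx}$ as $x\to-\infty$ and double-exponentially as $x\to+\infty$), so the limiting quantities on the right-hand side make sense. Once the moment convergence $\E[Y_N^m]\to\E[X_k^m]$ is in hand, the displayed formulas $\E[(T_{k,N}-b_N)^m] = a_N^m\E[X_k^m] + o(a_N^m)$ follow immediately by multiplying through by $a_N^m$ and using the definition of $o(\cdot)$.

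The heart of the proof is therefore establishing uniform integrability of $\{|Y_N|^m\}_N$. I would split this into control of the right tail and the left tail of $Y_N$. For the right tail (large positive $Y_N$, i.e.\ $T_{k,N}$ not too small), I would use the hypothesis $\E[T_{N'}]<\infty$ for some $N'\ge1$: this gives enough integrability of $T_N$ itself, and since $T_{k,N}\le$ (something comparable to) order statistics that can be bounded in terms of independent copies, one gets that $\E[T_{k,N}^{m'}]$ is finite and in fact bounded uniformly in $N$ after the $b_N$-shift and $a_N$-rescaling for $m'$ slightly larger than $m$; more precisely it suffices to show $\sup_N \E[|Y_N|^{m'}] < \infty$ for some $m' > m$, which implies uniform integrability of $\{|Y_N|^m\}$. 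For the left tail (large negative $Y_N$, i.e.\ $T_{k,N}$ very close to $0$), I would use the precise short-time asymptotics $\P(\tau\le t)\sim A t^p e^{-C/t^\beta}$: this makes $\P(T_{k,N}\le t)$ extremely small for $t$ below the scale $b_N$, because it requires at least $k$ of the $N$ searchers to achieve an atypically tiny FPT, and the double-exponential suppression in the Gumbel scaling controls the negative moments. Concretely, one writes $\E[|Y_N|^m \mathbf{1}_{Y_N < -R}]$ as an integral of $\P(Y_N < -s)$ against $s^{m-1}\,\dd s$ and shows this is small uniformly in $N$ for large $R$, using that $\P((T_{k,N}-b_N)/a_N < -s)$ decays like $\exp(-ce^{\text{const}\cdot s})$ type bounds inherited from the Gumbel/Fréchet-type extreme value structure.

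The main obstacle I expect is making the right-tail bound $\sup_N \E[|Y_N|^{m'}] < \infty$ genuinely uniform, because the hypothesis only gives $\E[T_{N'}] < \infty$ for \emph{some} $N'$, not moment bounds for all $N$ with the right $N$-dependence. The fix is the standard monotonicity/comparison trick: for $N \ge N'$, partition the $N$ searchers into $\lfloor N/N' \rfloor$ disjoint blocks of size $N'$; the minimum over all $N$ searchers is at most the minimum within each block, and these block-minima are iid copies of $T_{N'}$, so $T_N$ is stochastically dominated by $\min$ of iid copies of $T_{N'}$ — equivalently $\P(T_N > t) \le \P(T_{N'}>t)^{\lfloor N/N'\rfloor}$, which forces $\E[T_N^m]$ to \emph{decrease} rapidly in $N$. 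One then relates $\E[T_{k,N}^{m'}]$ to $\E[T_{N-k+1}^{m'}]$-type quantities (the $k$th order statistic from below among $N$ is bounded by the minimum of a sub-collection of size $N-k+1$ after conditioning), divides by $a_N^{m'}$, and checks the powers of $\ln N$ match up using the explicit forms of $a_N, b_N$ in~\eqref{abab}--\eqref{ababuf}. A secondary technical point is that $a_N/b_N \to 0$ and $b_N \to 0$, so the shift-and-scale does not blow anything up; this needs to be verified from~\eqref{abab}--\eqref{ababuf} but is routine. With uniform integrability for some $m' > m$ established on both tails, the conclusion follows from the moment-convergence criterion.
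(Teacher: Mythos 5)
Your approach is essentially the paper's: the paper's own proof is only a pointer to Theorems~3 and~5 of \cite{lawley2020dist}, and the argument there is exactly the one you outline --- upgrade the convergence in distribution of $Y_{N}=(T_{k,N}-b_{N})/a_{N}$ from Theorems~\ref{gumbel}--\ref{kth} to moment convergence via uniform integrability, controlling the far right tail with the hypothesis $\E[T_{N'}]<\infty$ together with the blocking bound $\P(T_{N}>t)\le(\P(T_{N'}>t))^{\lfloor N/N'\rfloor}$ and the order-statistic domination $T_{k,N}\le T_{N-k+1}$, and controlling the left tail with the short-time asymptotics of $\P(\tau\le t)$.

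One correction: your claimed left-tail bound $\P(Y_{N}<-s)\lesssim\exp(-ce^{cs})$ is false --- the double-exponential suppression lives in the \emph{right} tail of $X_{k}$ (where $\P(X_{k}>x)\approx e^{-e^{x}}$), while the left tail of the density $e^{kx-e^{x}}/(k-1)!$ decays only like $e^{kx}$. The correct estimate, obtained from $\P(T_{k,N}\le t)\le\binom{N}{k}(\P(\tau\le t))^{k}$ and the expansion of $C/t^{\beta}$ at $t=b_{N}-sa_{N}$, is $\P(Y_{N}<-s)\lesssim e^{-ks}$ up to polylogarithmic factors, and this exponential decay (together with the deterministic floor $Y_{N}\ge-b_{N}/a_{N}$, since $T_{k,N}\ge0$) is what actually closes the uniform-integrability estimate on the left. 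If you attempted to prove the double-exponential bound as stated you would fail, but the weaker true bound suffices, so the proof goes through once this is fixed.
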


\begin{remark}\label{remarkmoments}
Theorem~\ref{moments} gives explicit higher order approximations to the integer moments of $T_{N}$ and $T_{k,N}$ since $\E[X_{k}^{m}]$ can be explicitly calculated if $m>0$ is an integer. For example, we have that as $N\to\infty$,
\begin{align*}
\E[T_{N}]
&=b_{N}-\gamma a_{N}+o(a_{N}),\\
\textup{Variance}(T_{N})
&=\frac{\pi^{2}}{6}a_{N}^{2}+o(a_{N}^{2}),\\
\E[T_{k,N}]
&=b_{N}+\psi(k)a_{N}+o(a_{N})
=\E[T_{1,N}]+H_{k-1}a_{N}+o(a_{N}),\\
\textup{Variance}(T_{k,N})
&=\psi'(k)a_{N}^{2}+o(a_{N}^{2}),
\end{align*}
where $\gamma\approx0.5772$ is the Euler-Mascheroni constant, $\psi(x)$ is the digamma function, and $H_{k-1}=\sum_{r=1}^{k-1}\tfrac{1}{r}$ is the $(k-1)$-th harmonic number. Furthermore, if we use the values of $a_{N}$ and $b_{N}$ in \eqref{ababuf}, then we obtain that as $N\to\infty$,
\begin{align*}
\E[T_{N}]
&=b_{N}-\gamma a_{N}+o(a_{N})\\
&=\Big(\frac{C}{\ln N}\Big)^{1/\beta}\left\{\Big(1
+\frac{p\ln(\ln(N))}{\beta\ln N}
-\frac{C\ln(AC^{p/\beta})}{\ln N}\Big)^{1/\beta}
-\frac{\gamma }{\beta\ln N}\right\}+o(a_{N}).
\end{align*}
Notice that the leading order term in this expression agrees with Theorem~\ref{uni}. The higher order terms show how the constants $A$ and $p$ affect the extreme statistics.

\end{remark}


\section{Examples}\label{examples}

In this section, we apply our results to several examples. The first three examples (sections~\ref{half}-\ref{pearson}) are in simple one-dimensional geometries. The third example (section~\ref{narrow}) is for the narrow escape problem with a small target on the boundary of a sphere. The final two examples (sections~\ref{sectiongen} and \ref{manifold}) show how our results hold in very general setups. We begin by introducing notation which is common to these examples. The notation and framework is similar to \cite{lawley2020uni} which studied FPTs of normal diffusion.

Let $\{X_{\alpha}(t)\}_{t\ge0}$ denote the position of a subdiffusive searcher with characteristic generalized diffusivity $K_{\alpha}>0$ on a $d$-dimensional manifold $M$. Let $p_{\alpha}(x,t\,|x_{0},0)$ denote the probability density that $X_{\alpha}(t)=x\in M$ given $X_{\alpha}(0)=x_{0}\in M$. As in section~\ref{prelim}, we construct the subdiffusive process by a random time change of a diffusive process. In particular, let $\{X_{1}(s)\}_{s\ge0}$ denote the position of a diffusive searcher on $M$ with probability density $p_{1}(x,s\,|\,x_{0},0)$. Further, let $\{U_{\alpha}(s)\}_{s\ge0}$ denote an $\alpha$-stable subordinator independent of $X_{1}$ with inverse $\{S_{\alpha}(t)\}_{t\ge0}$ (see \eqref{S}). We then define
\begin{align}\label{define}
X_{\alpha}(t):=X_{1}(S_{\alpha}(t)),\quad t\ge0.
\end{align}
We are most interested in the case $\alpha\in(0,1)$, though our results remain valid if we simply take $U_{\alpha}(s)=s$ and $S_{\alpha}(t)=t$ in the case $\alpha=1$.

Let $\tau>0$ be the subdiffusive FPT to some target ${\Omega_{\text{T}}}\subset M$,
\begin{align}\label{tau5}
\tau
:=\inf\{t>0:X_{\alpha}(t)\in \Omega_{\text{T}}\}
=U_{\alpha}(\sigma),
\end{align}
where $\sigma>0$ is the diffusive FPT,
\begin{align}\label{sigma5}
\sigma
:=\inf\{s>0:X_{1}(s)\in \Omega_{\text{T}}\}.
\end{align}
Let $\{\tau_{n}\}_{n=1}^{\infty}$ be a sequence of iid realizations of $\tau$ and let $T_{k,N}$ denote the $k$th order statistic defined in \eqref{tkn}. Assume the target ${\Omega_{\text{T}}}$ is the closure of its interior, which avoids trivial cases such as the target being a single point, which would force $\tau=\sigma=+\infty$ in dimension $d\ge2$. Assume the initial distribution of $X_{\alpha}(0)=X_{1}(0)$ is a probability measure with compact support $\Omega_{0}\subset M$ that does not intersect the target,
\begin{align}\label{away}
\Omega_{0}\cap {\Omega_{\text{T}}}=\varnothing.
\end{align}
As one example, the initial distribution could be a Dirac delta function at a single point $X_{\alpha}(0)=X_{1}(0)=x_{0}=\Omega_{0}\in M$ if $x_{0}\notin {\Omega_{\text{T}}}$. As another example, the initial distribution could be uniform on a closed set $\Omega_{0}$ satisfying \eqref{away}.

In each of the examples below, we find that for any $m\ge1$ and $k\ge1$, the $m$th moment of the $k$th fastest FPT satisfies
\begin{align}\label{leadinglater}
\E[(T_{k,N})^{m}]
\sim\bigg(\frac{t_{\alpha}}{(\ln N)^{2/\alpha-1}}\bigg)^{m}
\quad\text{as }N\to\infty,
\end{align}
where $t_{\alpha}>0$ is the characteristic timescale of subdiffusion,
\begin{align*}
t_{\alpha}
:=\Big(\alpha^{\alpha}(2-\alpha)^{2-\alpha}\frac{L^{2}}{4K_{\alpha}}\Big)^{1/\alpha}>0,
\end{align*}
where $K_{\alpha}>0$ is the generalized diffusion coefficient, and $L>0$ is a certain geodesic distance between $\Omega_{0}$ and the target $\Omega_{\text{T}}$. The distance $L>0$ is given below, but we note here that it is the shortest distance between the possible searcher starting locations $\Omega_{0}$ and the target $\Omega_{\text{T}}$ that avoids regions of slow diffusion and avoids reflecting obstacles. Further, the distance $L>0$ is unaffected by any drift on the searcher.

Furthermore, in some of these examples, we are able to find constants $A_{1}>0$, $p_{1}\in\R$, and $C_{1}=L^{2}/(4K_{\alpha})$ so that the diffusive FPT satisfies
\begin{align}\label{shorts}
\P(\sigma\le s)
\sim A_{1}s^{p_{1}}e^{-C_{1}/s}\quad\text{as }s\to0+.
\end{align}
We therefore apply Corollary~\ref{corlog} to conclude that the subdiffusive FPT satisfies
\begin{align}\label{shortt}
\P(\tau\le t)
\sim At^{p}e^{-C/t^{\beta}}\quad\text{as }t\to0+,
\end{align}
where
\begin{align}\label{values}
\beta
=\frac{\alpha}{2-\alpha},\quad
C
=(t_{\alpha})^{\beta},\quad
p=\beta p_{1},
\quad
A
=\frac{A_{1} \Big(\alpha ^{\frac{-\alpha }{2-\alpha }} C_{1}^{\frac{1-\alpha}{2-\alpha}}\Big)^{p_{1}}}{\sqrt{\alpha (2-\alpha)}}.
\end{align}
We then apply Theorem~\ref{gumbel} to conclude that
\begin{align}\label{cdlater}
\frac{T_{N}-b_{N}}{a_{N}}
\to_{\dist}\textup{Gumbel}(0,1)\quad\text{as }N\to\infty,
\end{align}
where $\{a_{N}\}_{N\ge1}$ and $\{b_{N}\}_{N\ge1}$ are given by \eqref{abab} or \eqref{ababuf}. The convergence in distribution in \eqref{cdlater} means that the distribution of $T_{N}$ is approximately Gumbel with shape parameter $b_{N}$ and scale parameter $a_{N}$,
\begin{align*}
\P(T_{N}>t)
\approx\exp\Big[-\exp\Big(\frac{t-b_{N}}{a_{N}}\Big)\Big]\quad\text{if }N\gg1.
\end{align*}
More generally, we apply Theorem~\ref{kth} to conclude that the $k$th fastest FPT has the following convergence in distribution,
\begin{align}\label{cdlaterk}
\frac{T_{k,N}-b_{N}}{a_{N}}
\to_{\dist}X_{k}\quad\text{as }N\to\infty,
\end{align}
where $X_{k}$ has the probability density function in \eqref{xkpdf}. Furthermore, we apply Theorem~\ref{moments} to obtain higher order corrections to the leading order moment behavior in \eqref{leadinglater}. For example,
\begin{align}\label{higherexample}
\begin{split}
\E[T_{N}]
&=b_{N}-\gamma a_{N}+o(a_{N})\quad\text{as }N\to\infty,\\
\E[T_{k,N}]
&=\E[T_{N}]+H_{k-1}a_{N}+o(a_{N})\quad\text{as }N\to\infty,
\end{split}
\end{align}
where $H_{k-1}=\sum_{r=1}^{k-1}\tfrac{1}{r}$ is the $(k-1)$-th harmonic number.

\subsection{Pure subdiffusion in one dimension}\label{half}

Consider a subdiffusive searcher $X_{\alpha}(t)$ on  $M=\R$ with an absorbing target $\Omega_{\text{T}}=(-\infty,0]$. The probability density $p_{\alpha}(x,t\,|x_{0},0)$ that $X_{\alpha}(t)=x>0$ given $X_{\alpha}(0)=x_{0}>0$ satisfies the fractional FPE,
\begin{align}\label{1d}
\begin{split}
\frac{\partial}{\partial t}p_{\alpha}
&= K_{\alpha}\frac{\partial^{2}}{\partial x^{2}}\D p_{\alpha},\quad x>0,\,t>0,\\
p_{\alpha}
&=\delta(x-x_{0}),\quad t=0,
\end{split}
\end{align}
with an absorbing boundary condition at the origin,
\begin{align}\label{abc}
p_{\alpha}
&=0,\quad x=0,\,t>0.
\end{align}

The corresponding diffusive process $X_{1}(s)$ used in \eqref{define} thus follows the SDE,
\begin{align*}
\dd X_{1}(s)
=\sqrt{2K_{\alpha}}\,\dd W(s),
\quad X_{1}(0)=x_{0}>0,
\end{align*}
where $\{W(s)\}_{s\ge0}$ is a standard Brownian motion. Assuming the initial condition is a Dirac mass at $X_{1}(0)=X_{\alpha}(0)=x_{0}>0$, it is well-known that the cumulative distribution function of the diffusive FPT $\sigma$ in \eqref{sigma5} is \cite{carslaw1959}
\begin{align}\label{erfc}
\P({\sigma}\le s)
=\text{erfc}\Big(\frac{x_{0}}{\sqrt{4K_{\alpha}s}}\Big),\quad s>0.
\end{align}
Taking $s\to0+$ in \eqref{erfc}, we find that $\P(\sigma\le s)$ satisfies \eqref{shorts} with
\begin{align*}
A_{1}
=\sqrt{\frac{4K_{\alpha}}{\pi x_{0}^{2}}},\quad
p_{1}
=\frac{1}{2},\quad
C_{1}
=\frac{x_{0}^{2}}{4K_{\alpha}}.
\end{align*}
Therefore, Corollary~\ref{corlog} ensures that the cumulative distribution function of the subdiffusive FPT ${\tau}$ in \eqref{tau5} satisfies \eqref{shortt} with $\beta,C,p,A$ given in \eqref{values}.

Using \eqref{erfc}, we have that $\P({\sigma}>s)=\O(s^{-1/2})$ as $s\to\infty$. Therefore, applying Theorems~\ref{uni} and \ref{nice} yields that the $m$th moment of $T_{k,N}$ has the leading order behavior in \eqref{leadinglater} where the geodesic distance is merely the distance to the target, $L=x_{0}$. Furthermore, applying Theorems~\ref{gumbel}-\ref{kth} yields the convergence in distribution of $T_{N}$ in \eqref{cdlater} and $T_{k,N}$ in \eqref{cdlaterk}, and we further have the higher order moment formulas of Theorem~\ref{moments}, which includes the mean formulas in \eqref{higherexample}.

\begin{figure}[t]
\centering
\includegraphics[width=1\linewidth]{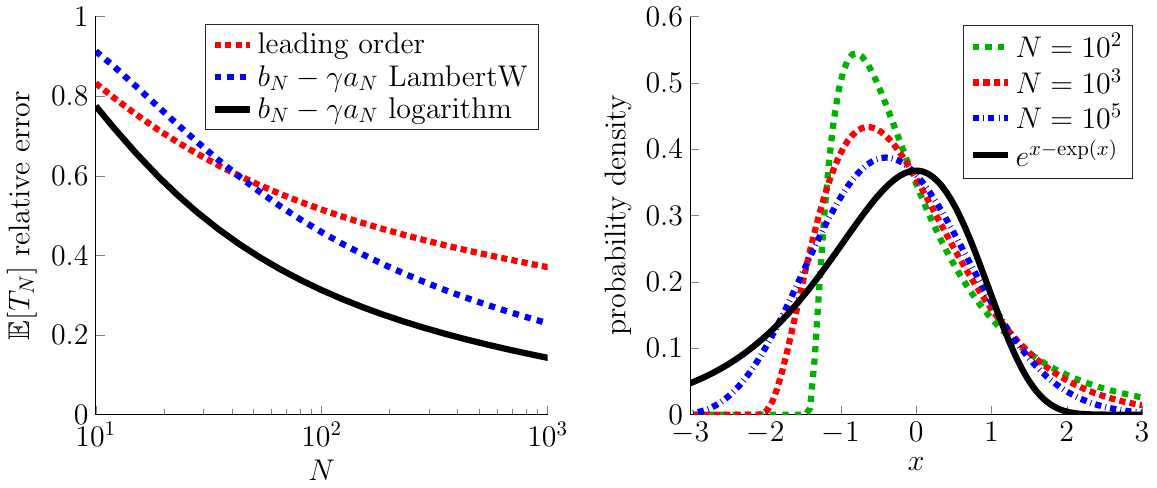}
\caption{One-dimensional subdiffusion with $\alpha=1/2$. The left panel plots the relative error \eqref{re} for various approximations to the mean fastest FPT, $\E[T_{N}]$. The right panel illustrates the convergence in distribution of $(T_{N}-b_{N})/a_{N}$ to a standard Gumbel random variable as $N$ grows. See the text for more details.}
\label{figboth}
\end{figure}

We illustrate some of these results numerically in Figure~\ref{figboth} in the case $\alpha=1/2$. In the left panel, we plot the relative error
\begin{align}\label{re}
\Big|\frac{\E[T_{N}]-\T_{N}}{\E[T_{N}]}\Big|,
\end{align}
where $\T_{N}$ is one of three approximations of $\E[T_{N}]$ described below. The value of $\E[T_{N}]$ used in \eqref{re} is calculated numerically by quadrature,
\begin{align*}
\E[T_{N}]
=\int_{0}^{\infty}(\P(\tau>t))^{N}\,\dd t,
\end{align*}
where $\P(\tau>t)$ is given by the analytical formula
\begin{align}\label{bigformula}
\begin{split}
&\P(\tau>t)
=\int_{0}^{\infty}q_{\alpha}(s,t)\P(\sigma>s)\,\dd s\\
&\quad=\frac{\Gamma \left(-\frac{1}{4}\right) \, _1F_3\left(\frac{3}{4};\frac{5}{4},\frac{3}{2},\frac{7}{4};-\frac{1}{256 t}\right)-24 \sqrt{t} \Gamma \left(\frac{1}{4}\right) \, _1F_3\left(\frac{1}{4};\frac{1}{2},\frac{3}{4},\frac{5}{4};-\frac{1}{256 t}\right)}{24 \sqrt{2} \pi  t^{3/4}}\\
&\qquad+\frac{\, _1F_3\left(\frac{1}{2};\frac{3}{4},\frac{5}{4},\frac{3}{2};-\frac{1}{256 t}\right)}{2 \sqrt{\pi } \sqrt{t}}+1,
\end{split}
\end{align}
involving the gamma function, $\Gamma(\cdot)$, and the hypergeometric function,
\begin{align*}
{}_pF_q(a_1,\ldots,a_p;b_1,\ldots,b_q;z) = \sum_{n=0}^\infty \frac{(a_1)_n\cdots(a_p)_n}{(b_1)_n\cdots(b_q)_n} \, \frac {z^n} {n!},
\end{align*}
and the rising factorial (or Pochhammer symbol),
\begin{align*}
(a)_0 &= 1, \quad (a)_n = a(a+1)(a+2) \cdots (a+n-1), \quad n \geq 1.
\end{align*}
The integration in \eqref{bigformula} was done using Mathematica \cite{mathematica} and the fact that $q_{\alpha}(s,t)=\frac{t}{\alpha s^{1+1/\alpha}}l_{\alpha}(ts^{-1/\alpha})$ where $l_{\alpha}(z)=\frac{e^{-\frac{1}{4 z}}}{2 \sqrt{\pi } z^{3/2}}$ since $\alpha=1/2$.

The red dotted curve in the left panel of Figure~\ref{figboth} is the error \eqref{re} for the leading order approximation $\T_{N}=t_{\alpha}/(\ln N)^{2/\alpha-1}$ in \eqref{leadinglater}. The blue dashed curve (respectively black solid curve) is for the higher order approximation $\T_{N}=b_{N}-\gamma a_{N}$ where $a_{N}$ and $b_{N}$ are given by \eqref{abab} (respectively \eqref{ababuf}). In agreement with the theory, the error decays faster for the higher order approximations than for the leading order approximation.

In the right panel of Figure~\ref{figboth}, we show the convergence in distribution of $(T_{N}-b_{N})/a_{N}$ to a standard Gumbel random variable where $a_{N}$ and $b_{N}$ are in \eqref{abab}.  The colored, non-solid curves are the probability density function of $(T_{N}-b_{N})/a_{N}$ for $N\in\{10^{2},10^{3},10^{5}\}$ which was calculated using the analytical formula for the survival probability of $\tau$ in \eqref{bigformula}. As $N$ increases, these curves approach the probability density function of a standard Gumbel random variable ($e^{x-\exp(x)}$). We set $x_{0}=L=K_{\alpha}=1$ in Figure~\ref{figboth}.

\subsection{Partially absorbing target}

We can quickly extend the analysis above to the case that the target is partially absorbing \cite{grebenkov2006}. In this case, the probability density $p_{\alpha}$ for the subdiffusive process again satisfies \eqref{1d}, with the absorbing boundary condition at the origin in \eqref{abc} replaced by the partially absorbing condition \cite{seki2003, eaves2008}
\begin{align}\label{partial}
K_{\alpha}\frac{\partial}{\partial x}p_{\alpha}
&=\kappa_{\alpha}p_{\alpha},\quad x=0,\,t>0,
\end{align}
where $\kappa_{\alpha}>0$ is a generalized target reactivity parameter with dimensions $(\text{length})(\text{time})^{-\alpha}$.

To construct the subdiffusive process, let $\{X_{1}(s)\}_{s\ge0}$ be a diffusion with probability density $p_{1}(x,s\,|\,x_{0},0)$ satisfying the integer FPE,
\begin{align*}
\frac{\partial}{\partial s}p_{1}
&=K_{\alpha}\frac{\partial^{2}}{\partial x^{2}}p_{1},\quad x>0,\,s>0,\\
K_{\alpha}\frac{\partial}{\partial x}p_{1}
&=\kappa_{\alpha}p_{1},\quad x=0,\,s>0,\\
p_{1}
&=\delta(x-x_{0}),\quad s=0.
\end{align*}
Defining $X_{\alpha}(t):=X_{1}(S_{\alpha}(t))$, it is immediate that the density of $X_{\alpha}$ satisfies \eqref{1d} with the partially absorbing boundary condition \eqref{partial}. Furthermore, if ${\sigma}$ is the absorption time of $X_{1}(s)$ at the origin, then the absorption time of $X_{\alpha}(t)$ at the origin is ${\tau}=U_{\alpha}({\sigma})$. Assuming $X_{1}(0)=X_{\alpha}(0)=x_{0}>0$, it is well-known that \cite{carslaw1959}
\begin{align}\label{erfck}
\P({\sigma}\le s)
=\text{erfc}\Big(\frac{{{x_{0}}}}{\sqrt{4K_{\alpha} s}}\Big)-e^{\frac{\kappa_{\alpha}  (\kappa_{\alpha}  s+{{x_{0}}})}{K_{\alpha}}} \text{erfc}\Big(\frac{2 \kappa_{\alpha}  s+{{x_{0}}}}{\sqrt{4K_{\alpha} s}}\Big),\quad s>0,
\end{align}
and therefore $\P(\sigma\le s)$ satisfies \eqref{shorts} with
\begin{align}\label{kvalues}
A_{1}
=\frac{4}{\sqrt{\pi}}\frac{\kappa_{\alpha}x_{0}}{K_{\alpha}}\Big(\frac{K_{\alpha}}{x_{0}^{2}}\Big)^{3/2},\quad
p_{1}
=\frac{3}{2},\quad
C_{1}
=\frac{x_{0}^{2}}{4K_{\alpha}}.
\end{align}
By Corollary~\ref{corlog}, the cumulative distribution function of ${\tau}$ satisfies \eqref{shortt} with $\beta,C,p,A$ given in \eqref{values}. A straightforward calculation shows that \eqref{shortt}-\eqref{values} and \eqref{kvalues} agree with the results of Grebenkov in \cite{grebenkov2010} in this example.

Using \eqref{erfck}, we have that $\P({\sigma}>s)=\O(s^{-1/2})$ as $s\to\infty$. Therefore, Theorems~\ref{uni} and \ref{nice} imply that the $m$th moment of $T_{k,N}$ has the leading order behavior in \eqref{leadinglater} with $L=x_{0}$. We emphasize that this leading order behavior is independent of the partial absorption rate $\kappa_{\alpha}>0$ and is the same as that found in the section above for a perfectly absorbing target ($\kappa_{\alpha}=\infty$).

To see the affect of $\kappa_{\alpha}$ at higher order, we apply the analysis of section~\ref{distribution}. In particular, applying Theorems~\ref{gumbel}-\ref{kth} yields the convergence in distribution of $T_{N}$ in \eqref{cdlater} and $T_{k,N}$ in \eqref{cdlaterk}. Further, the higher order moment formulas of Theorem~\ref{moments} give as $N\to\infty$ (see Remark~\ref{remarkmoments})
\begin{align*}
\E[T_{N}]
&=\Big(\frac{C}{\ln N}\Big)^{1/\beta}\left\{\Big(1
+\frac{p\ln(\ln(N))}{\beta\ln N}
-\frac{C\ln(AC^{p/\beta})}{\ln N}\Big)^{1/\beta}
-\frac{\gamma }{\beta\ln N}\right\}+o(a_{N}),
\end{align*}
where we have used the values \eqref{ababuf} for $a_{N}$ and $b_{N}$. Noting that $A$ is linear in $\kappa_{\alpha}$ (see \eqref{values} and \eqref{kvalues}), this expression shows how the finite reactivity $\kappa_{\alpha}$ affects $\E[T_{N}]$ at higher order.

\subsection{Subdiffusion in one dimension with a drift}\label{pearson}

Consider a subdiffusive searcher $X_{\alpha}(t)$ on $M=\R$ with absorbing boundary conditions at the targets at $x=0$ and $x={L_{0}}$ (meaning the target is $\Omega_{\text{T}}=(-\infty,0]\cup[{L_{0}},\infty)$). Suppose further that there is a constant drift that pushes the searcher toward $x={L_{0}}$. The probability density $p_{\alpha}(x,t\,|x_{0},0)$ that $X_{\alpha}(t)=x$ given $X_{\alpha}(0)=x_{0}\in(0,{L_{0}})$ satisfies the fractional FPE,
\begin{align*}
\frac{\partial}{\partial t}p_{\alpha}
&=\Big(-\frac{\partial}{\partial x}\big[V_{\alpha}p\big]+K_{\alpha}\frac{\partial^{2}}{\partial x^{2}}\Big)\D p_{\alpha},\quad x>0,\,t>0,\\
p_{\alpha}
&=\delta(x-x_{0}),\quad t=0,\\
p_{\alpha}
&=0,\quad x\in\{0,{L_{0}}\},\,t>0,
\end{align*}
where $V_{\alpha}>0$ is a constant with dimension $(\text{length})(\text{time})^{-\alpha}$.

To construct this subdiffusive process, the diffusion $X_{1}(s)$ satisfies the SDE,
\begin{align*}
\dd X_{1}(s)
=V_{\alpha}\,\dd s+\sqrt{2K_{\alpha}}\,\dd W(s),\quad X_{1}(0)=x_{0}\in(0,{L_{0}}).
\end{align*}
If $X_{1}(0)=X_{\alpha}(0)=x_{0}\in(0,{L_{0}}/2)$ (so that the searcher starts closer to the target at $x=0$), then the cumulative distribution function of the diffusive FPT ${\sigma}$ in \eqref{sigma5} has the short-time behavior in \eqref{shorts} where (see the Appendix),
\begin{align*}
A_{1}
=\sqrt{\frac{4K_{\alpha}}{\pi x_{0}^{2}}}
\exp\Big(-\frac{V_{\alpha}}{2K_{\alpha}}x_{0}\Big),\quad
p_{1}
=\frac{1}{2},\quad
C_{1}
=\frac{x_{0}^{2}}{4K_{\alpha}}.
\end{align*}
Therefore, Corollary~\ref{corlog} ensures that the cumulative distribution function of the subdiffusive FPT ${\tau}$ in \eqref{tau5} satisfies \eqref{shortt} with $\beta,C,p,A$ given in \eqref{values}.

It is well-known that $\P(\sigma>s)$ vanishes exponentially as $s\to\infty$, and therefore Theorem~\ref{nice} ensures that $\E[T_{N}]<\infty$ for $N>1/\alpha$. Hence, we can again apply Theorems~\ref{uni}, \ref{gumbel}, \ref{kth}, and \ref{moments} to obtain the large $N$ distribution and moments of $T_{k,N}$ (and so \eqref{leadinglater} holds with $L=x_{0}\in(0,{L_{0}}/2)$). We note that these results show that the drift $V_{\alpha}$ has no effect on the leading order extreme statistics as $N\to\infty$, and Theorem~\ref{moments} shows how the drift affects extreme statistics at higher order.

\subsection{Narrow escape for subdiffusion}\label{narrow}

The narrow escape problem is to determine the time it takes a single diffusive searcher to find a small target in an otherwise reflecting bounded domain \cite{holcman2014, ward10, ward10b, grebenkov2017}. The vast majority of works on the narrow escape problem focus on the mean of this time. Recently, Grebenkov, Metzler, and Oshanin found an approximation for the full distribution of this FPT in a spherical domain \cite{grebenkov2019}. In this section, we use their results to determine the full distribution and moments for the fastest subdiffusive FPT in the narrow escape problem.

Consider a subdiffusive searcher $X_{\alpha}(t)$ in the three-dimensional sphere of radius $L>0$,
\begin{align*}
M
=\{x\in\R^{3}:\|x\|<L\},
\end{align*}
with a reflecting boundary. Suppose the target $\partial\Omega_{\text{T}}$ is a small spherical cap with polar angle $\eps>0$. Assuming $X_{1}(0)=0$, it was recently shown that the probability density of the diffusive FPT ${\sigma}$ in \eqref{sigma5} has the short-time behavior (see equation (C.16) in \cite{grebenkov2019})
\begin{align*}
\frac{\dd}{\dd s}\P(\sigma\le s)
\sim\frac{L(1-\cos\eps)}{\sqrt{4\pi K_{\alpha}s^{3}}}\Big(\frac{L^{2}}{2K_{\alpha}s}\Big)e^{-L^{2}/(4K_{\alpha}s)}\quad\text{as }s\to0+.
\end{align*}
Taking the Laplace transform of this expression, dividing by the Laplace variable, and taking the inverse Laplace transform yields that $\P(\sigma\le s)$ has the short-time behavior in \eqref{shorts} with
\begin{align*}
A_{1}
=\frac{L (1-\cos \eps )}{\sqrt{\pi K_{\alpha}}},\quad
p_{1}
=-1/2,\quad
C_{1}
=\frac{L^{2}}{4K_{\alpha}}.
\end{align*}
We therefore apply Corollary~\ref{corlog} to obtain the short-time behavior of the subdiffusive FPT $\tau$ in \eqref{tau5} for the narrow escape problem, with $\beta,A,p,C$ given in \eqref{values}.

It is well-known that $\P(\sigma>s)$ vanishes exponentially as $s\to\infty$, and therefore Theorem~\ref{nice} ensures that $\E[T_{N}]<\infty$ for $N>1/\alpha$. Hence, we can again apply Theorems~\ref{uni}, \ref{gumbel}, \ref{kth}, and \ref{moments} to obtain the large $N$ distribution and moments of $T_{k,N}$. We note that a single diffusive FPT $\sigma$ and a single subdiffusive FPT $\tau$ both diverge as the hole size vanishes (i.e.\ the narrow escape limit, $\eps\to0$). However, these results show that the size of the hole, $\eps>0$, has no effect on the leading order extreme statistics as $N\to\infty$. In particular, the leading order extreme statistics for this narrow escape problem are identical to the case that the entire boundary is an absorbing target. Theorem~\ref{moments} shows how the target size $\eps$ affects extreme statistics at higher order.

\subsection{Subdiffusion in $\R^{d}$ with space-dependent drift and diffusivity}\label{sectiongen}

Let $X_{\alpha}(t)$ be a $d$-dimensional subdiffusive process with a general space-dependent drift and diffusivity. Specifically, suppose the probability density of $X_{\alpha}$ satisfies the fractional FPE in \eqref{ffpe}. We construct this process by setting $X_{\alpha}(t):=X_{1}(S_{\alpha}(t))$ exactly as in section~\ref{prelim}.

Suppose the target $\Omega_{\text{T}}\subset\R^{d}$ is such that the complement of the target, $\R^{d}\backslash\Omega_{\text{T}}$, is bounded. It was proven in \cite{lawley2020uni} that the distribution of the diffusive FPT $\sigma$ in \eqref{sigma5} satisfies
\begin{align}\label{loggen}
\lim_{s\to0+}s\ln\P({\sigma}\le s)
=-\frac{L^{2}}{4K_{\alpha}}>0,
\end{align}
where
\begin{align}\label{LLL}
L
:=\inf_{x_{0}\in\Omega_{0},x\in\Omega_{T}}\drie(x_{0},x),
\end{align}
where $\drie$ is the geodesic distance defined in \eqref{drie}. Upon noting that $\P(\sigma> s)$ decays exponentially as $s\to\infty$ since $\R^{d}\backslash\Omega_{\text{T}}$ is bounded, we apply Corollary~\ref{corlog} and Theorems~\ref{uni} and \ref{nice} to find that the extreme statistics $T_{k,N}$ satisfy \eqref{leadinglater}. Note that the Riemannian metric in \eqref{ll} that defines the geodesic distance in \eqref{drie} and \eqref{LLL} does not depend on the drift in the fractional FPE \eqref{ffpe}. Hence, the extreme statistics of subdiffusion are independent of the drift to leading order as $N\to\infty$. Looking again at the Riemannian metric in \eqref{ll}, we see that a space-dependent diffusivity affects extreme statistics of subdiffusion by making the fastest searchers avoid regions of slow diffusivity.

\subsection{Subdiffusion on a manifold with reflecting obstacles}\label{manifold}

Finally, we consider the case of subdiffusion on a $d$-dimensional Riemannian manifold $M$ that is smooth, connected, and compact. We define $X_{\alpha}$ as in \eqref{define}, where $X_{1}(s)$ is a diffusion on $M$ described by its generator $\L^{*}$, which in each coordinate chart is a second order differential operator of the form
\begin{align*}
\L^{*} f
=K_{\alpha}\sum_{i,j=1}^{d}\frac{\partial}{\partial x_{i}}\Big(a_{ij}(x)\frac{\partial f}{\partial x_{j}}\Big),
\end{align*}
where the matrix $a=\{a_{ij}\}_{i,j=1}^{d}$ satisfies mild conditions (in each coordinate chart, $a$ is symmetric, continuous, and its eigenvalues are bounded above some $\gamma_{1}>0$ and bounded below some $\gamma_{2}>\gamma_{1}$). If $M$ has a boundary, then we assume $X_{1}$ (and therefore $X_{\alpha}$) reflects from the boundary.

One motivating example for this setup is the case that $M$ is a set in $\R^{{d}}$ with smooth outer and inner boundaries (the boundaries act as obstacles to the motion of the searcher). Alternatively, $M$ could be the 2-dimensional surface of a 3-dimensional sphere. Note that the narrow escape problem of a small target(s) in an otherwise reflecting bounded domain fits this setup.

In this setup, it was proven in \cite{lawley2020uni} that the distribution of the diffusive FPT $\sigma$ in \eqref{sigma5} satisfies \eqref{loggen} where $L$ is given by \eqref{LLL} and $\drie$ is the geodesic distance defined in \eqref{drie}, where the infimum is over smooth paths $\omega:[0,1]\to M$ which connect $\omega(0)=x_{0}$ to $\omega(1)=x$. Upon noting that $\P(\sigma> s)$ decays exponentially as $s\to\infty$ since $M$ is connected and compact, we apply Corollary~\ref{corlog} and Theorems~\ref{uni} and \ref{nice} to find that the extreme statistics $T_{k,N}$ satisfy \eqref{leadinglater}. Since the infimum in the definition of $\drie$ is over paths which lie in $M$, this shows that the fastest subdiffusive searchers take the shortest path to the target while avoiding any reflecting obstacles.

\section{Discussion}

In this paper, we investigated extreme statistics of anomalous subdiffusion. We found an explicit formula for the moments of the $k$th fastest FPT, $T_{k,N}$, out of $N\gg1$ subdiffusive searchers that holds in significant generality. While the mean FPT of a single subdiffusive searcher is typically infinite \cite{yuste2004}, we found that the fastest subdiffusive FPT has a finite mean if $N$ is sufficiently large. We further found an approximation of the distribution of $T_{k,N}$ and higher order moment approximations. A key step in our analysis was proving a relation between short-time distributions of diffusion and subdiffusion, which is akin to Varadhan's formula \cite{varadhan1967} in large deviation theory. We proved this relation for probability densities of the position of a random searcher (see Corollaries~\ref{corv} and \ref{corg}) and for the distribution of FPTs of random searchers (see Corollary~\ref{corlog}). This relation allowed us to employ methods recently developed to study extreme FPTs of diffusive searchers \cite{lawley2020uni, lawley2020dist}.

Our analysis yielded the counterintuitive result that extreme FPTs are faster for subdiffusive searchers compared to diffusive searchers. This result bears some resemblance to the interesting work of Guigas and Weiss \cite{guigas2008}, which used computer simulations to show that a subdiffusive searcher can quickly find a nearby target with a much higher probability than a diffusive searcher. Based on this computational result, it was claimed in \cite{guigas2008} that this identifies a way in which cells can benefit from their crowded internal state and the induced subdiffusion. { While Ref.~\cite{guigas2008} modeled subdiffusion by fractional Brownian motion, our mathematical analysis shows that the basic computational result of \cite{guigas2008} also holds for subdiffusion modeled by fractional FPEs.}  
{ Let $\sigma_{\text{norm}}$  and $\tau_{\text{sub}}$ denote the respective FPTs of a normal diffusive searcher and a subdiffusive searcher (modeled by a fractional FPE) to a target.} We found that the subdiffusive searcher has a much higher probability of finding the target before a small time $t$,
\begin{align}\label{simple}
\P(\sigma_{\text{norm}}\le t)
\approx
e^{-t_{1}/t}
\ll
\P(\tau_{\text{sub}}\le t)
\approx
e^{-(t_{\alpha}/t)^{\alpha/(2-\alpha)}},
\end{align}
where $t_{1}>0$ and $t_{\alpha}>0$ are the diffusive and subdiffusive timescales,
\begin{align*}
t_{1}
:=\frac{L^{2}}{4K_{1}},\quad
t_{\alpha}
:=\Big(\alpha^{\alpha}(2-\alpha)^{2-\alpha}\frac{L^{2}}{4K_{\alpha}}\Big)^{1/\alpha},\quad \alpha\in(0,1),
\end{align*}
where $L>0$ is a certain geodesic distance between the searcher starting locations and the target and $K_{1}$ and $K_{\alpha}$ are the diffusivity and generalized diffusivity (see Corollary~\ref{corlog} and section~\ref{sectiongen} for a precise meaning of \eqref{simple}). In fact, \eqref{simple} has been shown in certain exactly solvable geometries \cite{grebenkov2010} and can be anticipated from the well-known behavior of the propagator for pure subdiffusion in free space \cite{metzler2000}.

Another very interesting related work is Reference \cite{grebenkov2010jcp}, in which Grebenkov studied (sub)diffusing particles searching for a partially absorbing target, { where the subdiffusion is modeled by the fractional diffusion equation}. In contrast to the present work, Grebenkov assumed that the $N\gg1$ subdiffusive searchers are initially uniformly distributed in a volume $V$, and considered the thermodynamic limit of $V\to\infty$ and $N\to\infty$ with a fixed density $N/V$. The author then studied the short-time and long-time asymptotic behavior of the survival probability of the fastest searcher.

An important assumption in the present work is that the searchers cannot start arbitrarily close to the target, which precludes the case that the searchers are initially uniformly distributed in the entire domain (see \eqref{away} for a precise statement). If this assumption is removed, then the short-time distribution of the FPT of a single searcher and the resulting extreme FPT statistics are fundamentally different. Indeed, if the (sub)diffusive searchers are initially distributed uniformly in a $d$-dimensional sphere of radius $L>0$, then the FPT distribution of a single searcher to reach the boundary of the sphere has the short-time behavior \cite{grebenkov2010},
\begin{align*}
\P(\tau\le t\,|\,X_{\alpha}(0)=_{\dist}\textup{uniform})
\sim At^{\alpha/2}\quad\text{as }t\to0+,
\end{align*}
where $A=\frac{2}{\alpha\Gamma(\alpha/2)}\frac{d}{L}\sqrt{K_{\alpha}}$. It then follows from Theorems~2 and 3 in \cite{lawley2020comp} that the fastest FPT out of these $N\gg1$ uniformly distributed searchers is approximately Weibull with scale parameter $(AN)^{-2/\alpha}$ and shape parameter $\alpha/2>0$. In particular, the mean fastest FPT satisfies
\begin{align*}
\E[T_{N}\,|\,\text{uniformly distributed searchers}]
\sim\frac{\Gamma(1+2/\alpha)}{A^{2/\alpha}}\frac{1}{N^{2/\alpha}}\quad\text{as }N\to\infty.
\end{align*}
Interestingly, this again shows that the fastest subdiffusive searchers ($\alpha\in(0,1)$) find the target faster than the fastest diffusive searchers ($\alpha=1$).

\subsubsection*{Acknowledgments}
The author was supported by the National Science Foundation (Grant Nos.\ DMS-1944574, DMS-1814832, and DMS-1148230).

\section{Appendix}

\subsection{Proofs}


In this section of the appendix, we collect the proofs of the results in sections~\ref{varadhan} and \ref{extreme}. We begin with a lemma.

\begin{lemma}\label{exact}
Assume that
\begin{align*}
F_{1}(s)
&= A_{1}s^{p_{1}}e^{-C_{1}/s},\\
l(z)
&= Bz^{-{\xi}}e^{-\kappa z^{-{\theta}}},\\
q(s,t)
&=\frac{t}{\alpha s^{1+1/\alpha}}l(ts^{-1/\alpha}),
\end{align*}
where $\alpha>0$, $A_{1}>0$, $p_{1}\in\R$, $C_{1}>0$, $B>0$, ${\xi}\in\R$, $\kappa>0$, ${\theta}>0$.
Then for any $r_{1}>\alpha\theta/(\alpha+\theta)$ and $\eps_{2}>0$, we have that
\begin{align*}
F_{\alpha}(t)
:=\int_{0}^{\infty}q(s,t)F_{1}(s)\,\dd s
&\sim\int_{t^{r_{1}}}^{\eps_{2}}q(s,t)F_{1}(s)\,\dd s
\sim At^{p}e^{-C/t^{\beta}}\quad\text{as }t\to0+,
\end{align*}
where $\beta$, $C$, $A$, and $p$ are in \eqref{bcdef} and \eqref{complicated2}.
\end{lemma}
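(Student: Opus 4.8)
The plan is to evaluate the integral $F_\alpha(t)=\int_0^\infty q(s,t)F_1(s)\,\dd s$ by Laplace's method (saddle-point), since the integrand is a product of two factors with competing exponential behaviors in $s$. Writing out the integrand explicitly, $q(s,t)F_1(s) = \frac{t}{\alpha s^{1+1/\alpha}} B\left(ts^{-1/\alpha}\right)^{-\xi} e^{-\kappa t^{-\theta}s^{\theta/\alpha}} \cdot A_1 s^{p_1} e^{-C_1/s}$, so the exponent is $g(s) := -\kappa t^{-\theta} s^{\theta/\alpha} - C_1/s$, a concave function of $s$ that is large and negative as $s\to 0^+$ and as $s\to\infty$, with a unique interior maximum. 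First I would solve $g'(s)=0$: this gives $s_* = \left(\frac{\alpha C_1}{\kappa\theta}\right)^{\alpha/(\alpha+\theta)} t^{\alpha\theta/(\alpha+\theta)}$, i.e. $s_*$ scales like $t^{\alpha\theta/(\alpha+\theta)} = t^{\beta}$ up to the constant $\left(\alpha C_1/(\kappa\theta)\right)^{\alpha/(\alpha+\theta)}$. Substituting back, $g(s_*) = -C/t^\beta$ with exactly the $C$ in \eqref{bcdef} — this I would verify by a direct computation, collecting the two terms $-\kappa t^{-\theta}s_*^{\theta/\alpha}$ and $-C_1/s_*$ and checking they combine to $C_1(\kappa\theta/(C_1\alpha))^{\alpha/(\alpha+\theta)}(\alpha+\theta)/\theta \cdot t^{-\beta}$.

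Next I would carry out the standard Laplace-method expansion around $s_*$. The second derivative $g''(s_*)$ is negative and, by homogeneity, scales like $t^{-\beta} s_*^{-2} \sim t^{-\beta - 2\beta}$; the Gaussian integral $\int e^{\frac12 g''(s_*)(s-s_*)^2}\,\dd s = \sqrt{2\pi/|g''(s_*)|}$ then contributes a power of $t$. The slowly varying prefactor $h(s) := \frac{t}{\alpha s^{1+1/\alpha}}B t^{-\xi} s^{\xi/\alpha} A_1 s^{p_1}$ is evaluated at $s=s_*$, contributing $A_1 B \alpha^{-1} t^{1-\xi} s_*^{\xi/\alpha + p_1 - 1 - 1/\alpha}$. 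Multiplying $h(s_*)$ by $\sqrt{2\pi/|g''(s_*)|}$, substituting $s_* = \left(\alpha C_1/(\kappa\theta)\right)^{\alpha/(\alpha+\theta)} t^\beta$ and simplifying the powers of $t$ and of $\left(\alpha C_1/(\kappa\theta)\right)$, should reproduce exactly $A t^p e^{-C/t^\beta}$ with $A$, $p$ as in \eqref{complicated2}. This is pure bookkeeping of exponents; I would organize it by tracking the power of $t$ and the power of the constant $\alpha C_1/(\kappa\theta)$ separately, then match against $p = \frac{\alpha(2p_1\theta - 2\xi + \theta + 2)}{2(\alpha+\theta)}$.

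The genuine work — as opposed to the formal saddle-point computation — is justifying the asymptotic equivalence rigorously and localizing the integral, which is the content of the claimed chain $F_\alpha(t)\sim\int_{t^{r_1}}^{\eps_2} q(s,t)F_1(s)\,\dd s$. I would split $\int_0^\infty = \int_0^{t^{r_1}} + \int_{t^{r_1}}^{\eps_2} + \int_{\eps_2}^\infty$ and show the first and third pieces are exponentially negligible relative to $e^{-C/t^\beta}$. For the tail $\int_{\eps_2}^\infty$: on $s\ge\eps_2$ the factor $e^{-\kappa t^{-\theta}s^{\theta/\alpha}} \le e^{-\kappa t^{-\theta}\eps_2^{\theta/\alpha}}$, which for small $t$ beats $e^{-C/t^\beta}$ since $\theta > \beta$; the remaining $s$-integral of $q(s,t)F_1(s)$ grows only polynomially, so this piece is $o(e^{-C/t^\beta})$. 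For the near-zero piece $\int_0^{t^{r_1}}$: here $e^{-C_1/s} \le e^{-C_1 t^{-r_1}}$, and the condition $r_1 > \alpha\theta/(\alpha+\theta) = \beta$ ensures $t^{-r_1}$ dominates $t^{-\beta}$ as $t\to0^+$, so again this piece is exponentially negligible (the power-law factors $t/(\alpha s^{1+1/\alpha})$ and $l(t/s^{1/\alpha})$ over this range must be bounded by a polynomial in $t^{-1}$, which one checks using $l(z)\le Bz^{-\xi}e^{-\kappa z^{-\theta}}$ and the fact that $t/s^{1/\alpha}$ is bounded away from $0$ when $s\le t^{r_1}$ with $r_1$ chosen appropriately — actually $t/s^{1/\alpha}\to\infty$, so $l$ is just bounded there, even easier). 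The main obstacle is then the uniform control on the central piece: showing that replacing $g(s)$ by its quadratic Taylor approximation at $s_*$ and $h(s)$ by $h(s_*)$ incurs only a $(1+o(1))$ error. Since after rescaling $s = s_*(1+u)$ the exponent becomes $t^{-\beta}$ times a fixed concave function of $u$ with a nondegenerate maximum at $u=0$, this is the classical Laplace-method estimate with large parameter $t^{-\beta}\to\infty$; I would invoke the standard argument (dominated convergence after the substitution $u = v\sqrt{t^\beta}\cdot(\text{const})$, or Watson's lemma-type bounds), taking care that $\eps_2$ is chosen small enough and $r_1$ close enough to $\beta$ that the rescaled domain $[t^{r_1}, \eps_2]$ expands to cover all of $\R$ in the $u$-variable while the integrand stays dominated. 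Theorem~\ref{log} already supplies the leading exponential rate $-C/t^\beta$, which can be used as an independent sanity check on the saddle-point value $g(s_*)$.
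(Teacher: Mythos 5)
Your proposal is correct and follows essentially the same route as the paper: both locate the saddle $s_*\propto t^{\alpha\theta/(\alpha+\theta)}$ of the combined exponent $-\kappa t^{-\theta}s^{\theta/\alpha}-C_1/s$, apply Laplace's method after rescaling $u=s/s_*$ with large parameter $t^{-\beta}$, and justify the restriction to $[t^{r_1},\eps_2]$ by noting that $s_*$ lies in that window because $r_1>\alpha\theta/(\alpha+\theta)$. Your explicit exponential bounds on the two tail pieces are if anything slightly more detailed than the paper's one-line justification of the localization.
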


\begin{proof}[Proof of Lemma~\ref{exact}]

By assumption, we have that
\begin{align}
&\int_{0}^{\infty}q(s,t)F_{1}(s)\,\dd s\nonumber\\
&=\int_{0}^{\infty}
\frac{t}{\alpha s^{1+1/\alpha}}B(ts^{-1/\alpha})^{-{\xi}}\exp\Big[-\kappa t^{-{\theta}}s^{{\theta}/\alpha}\Big]A_{1}s^{p_{1}}
e^{-C_{1}/s}\,\dd s\nonumber\\
&=A_{1}B\alpha^{-1}t^{1-{\xi}}\int_{0}^{\infty}
s^{r}\exp\Big[-\kappa t^{-{\theta}}s^{{\theta}/\alpha}-C_{1}s^{-1}\Big]\,\dd s,\label{integral}
\end{align}
where $r:=p_{1}+{\xi}/\alpha-1-1/\alpha$. A simple calculus exercise shows that the maximum of the exponential factor in the integrand occurs at
\begin{align}\label{maxst}
s=s_{0}(t)
:=\Big(\frac{C_{1}\alpha}{\kappa{\theta}}\Big)^{\frac{\alpha}{\alpha+{\theta}}}
t^{\frac{\alpha{\theta}}{\alpha+{\theta}}}
:=s_{1}t^{\frac{\alpha{\theta}}{\alpha+{\theta}}}
>0,
\end{align}
where $s_{1}:=(\frac{C_{1}\alpha}{\kappa{\theta}})^{\frac{\alpha}{\alpha+{\theta}}}>0$. We thus introduce the change of variables,
\begin{align*}
u
=\frac{s}{s_{0}(t)},
\end{align*}
so that \eqref{integral} becomes
\begin{align*}
&A_{1}B\alpha^{-1}t^{1-{\xi}}(s_{0}(t))^{r+1}
\int_{0}^{\infty}
u^{r}
\exp\Big[-\kappa t^{-{\theta}}(s_{0}(t))^{{\theta}/\alpha}u^{{\theta}/\alpha}-C_{1}(s_{0}(t))^{-1}u^{-1}\Big]\,\dd u\\
&=A_{1}B\alpha^{-1}t^{1-{\xi}}(s_{0}(t))^{r+1}
\int_{0}^{\infty}
u^{r}
\exp\Big[-t^{\frac{\alpha{\theta}}{\alpha+{\theta}}}\Big(\kappa s_{1}^{{\theta}/\alpha}u^{{\theta}/\alpha}+C_{1}s_{1}^{-1}u^{-1}\Big)\Big]\,\dd u,
\end{align*}
and the maximum of the exponential occurs at $u=1$.

We thus apply Laplace's method and obtain
\begin{align*}
\int_{0}^{\infty}
f(u)
e^{x\phi(u)}\,\dd u
\sim
\frac{\sqrt{2\pi}f(1)e^{x\phi(1)}}{\sqrt{-x\phi''(1)}},\quad\text{as }x:=t^{\frac{-\alpha{\theta}}{\alpha+{\theta}}}\to\infty,
\end{align*}
where
\begin{align*}
f(u)
&:=u^{r},\quad
\phi(u)
:=-\kappa s_{1}^{{\theta}/\alpha}u^{{\theta}/\alpha}-C_{1}s_{1}^{-1}u^{-1},
\end{align*}
and thus
\begin{align*}
f(1)=1,\quad
\phi(1)=-C_{1}\big(\frac{\kappa{\theta}}{C_{1}\alpha}\big)^{\frac{\alpha}{\alpha+{\theta}}}\frac{(\alpha+{\theta})}{{\theta}},\quad
\phi''(1)=-C_{1}\big(\frac{\kappa{\theta}}{C_{1}\alpha}\big)^{\frac{\alpha}{\alpha+{\theta}}}\frac{(\alpha+{\theta})}{\alpha}.
\end{align*}
Putting this all together, we obtain
\begin{align*}
\int_{0}^{\infty}q_{\alpha}(s,t)F_{1}(s)\,\dd s
\sim
At^{p}e^{-Ct^{-\beta}}\quad\text{as }t\to0+,
\end{align*}
where $A,p,C,\beta$ are in \eqref{bcdef} and \eqref{complicated2}. To complete the proof, we need only check that
\begin{align*}
\int_{0}^{\infty}q_{\alpha}(s,t)F_{1}(s)\,\dd s
\sim\int_{t^{r_{1}}}^{\eps_{2}}q_{\alpha}(s,t)F_{1}(s)\,\dd s\quad\text{as }t\to0+,
\end{align*}
for any $r>\alpha\theta/(\alpha+\theta)$ and $\eps_{2}>0$. This follows from \eqref{maxst}, since the value of $s$ in \eqref{maxst} lies in the interval $(t^{r_{1}},\eps_{2})$ for small $t$ since $r_{1}>\alpha\theta/(\alpha+\theta)$.
\end{proof}

\begin{proof}[Proof of Theorem~\ref{log}]
First, define $g_{F}(s)=\ln(e^{C_{1}/t}F_{1}(s))$ and $g_{l}(z)=\ln(e^{\kappa z^{-\theta}}l(z))$ so that
\begin{align*}
F_{1}(s)
=e^{-C_{1}/s}e^{g_{F}(s)},\quad
l(z)
=e^{-\kappa z^{-{\theta}}}e^{g_{l}(z)}.
\end{align*}
Therefore, \eqref{logF} and \eqref{logl} ensure that
\begin{align}\label{crush}
\lim_{s\to0+}sg_{F}(s)
=\lim_{z\to0+}z^{{\theta}}g_{l}(z)
=0.
\end{align}
Next, define
\begin{align*}
q(s,t)
:=\frac{t}{\alpha s^{1+1/\alpha}}l(t/s^{1/\alpha}),
\end{align*}
and decompose $F_{\alpha}(t)$ into three integrals,
\begin{align}
F_{\alpha}(t)
&=\int_{0}^{t^{r_{1}}}q(s,t)F_{1}(s)\,\dd s
+\int_{t^{r_{1}}}^{\eps_{2}}q(s,t)F_{1}(s)\,\dd s
+\int_{\eps_{2}}^{\infty}q(s,t)F_{1}(s)\,\dd s\nonumber\\
&=:I_{1}+I_{2}+I_{3},\label{integrals0}
\end{align}
where $\eps_{2}>0$ and the exponent $r_{1}$ is such that
\begin{align}\label{r10}
0<
\frac{\alpha{\theta}}{\alpha+{\theta}}
<r_{1}
<\min\{\alpha,\theta\}.
\end{align}
We can choose $r_{1}$ satisfying \eqref{r10} since ${\theta}>0$ and $\alpha>0$.

Looking to the first integral in \eqref{integrals0}, it follows from \eqref{crush} that for sufficiently small $t>0$,
\begin{align*}
I_{1}
=\int_{0}^{t^{r_{1}}}q(s,t)e^{-C_{1}/s}e^{g_{F}(s)}\,\dd s
&\le e^{-(C_{1}/2)/t^{r_{1}}}\int_{0}^{t^{r_{1}}}q(s,t)\,\dd s.
\end{align*}
Now, changing variables $z=t/s^{1/\alpha}$ gives
\begin{align}\label{qint}
\int_{0}^{t^{r_{1}}}q(s,t)\,\dd s
\le\int_{0}^{\infty}\frac{t}{\alpha s^{1+1/\alpha}}l(t/s^{1/\alpha})\,\dd s
=\int_{0}^{\infty}l(z)\,\dd z:=B_{1}
<\infty,
\end{align}
since $l(z)$ is integrable by assumption. Therefore, we have that
\begin{align}\label{b1}
I_{1}
\le e^{-(C_{1}/2)/t^{r_{1}}}B_{1},\quad\text{for sufficiently small }t.
\end{align}

Moving to the third integral in \eqref{integrals0}, it follows from \eqref{crush} that for sufficiently small $t>0$,
\begin{align*}
I_{3}
& =\frac{t}{\alpha}\int_{\eps_{2}}^{\infty}\frac{1}{s^{1+1/\alpha}}\exp\Big[-\kappa\Big(\frac{s^{1/\alpha}}{t}\Big)^{{\theta}}\Big]e^{g_{l}(t/s^{1/\alpha})}F_{1}(s)\,\dd s\\
& \le\frac{t}{\alpha}\exp\Big[-\frac{\kappa}{2}\Big(\frac{\eps_{2}^{1/\alpha}}{t}\Big)^{{\theta}}\Big]\int_{\eps_{2}}^{\infty}\frac{1}{s^{1+1/\alpha}}F_{1}(s)\,\dd s.
\end{align*}
Since $F_{1}(s)$ is assumed to be bounded, we have that
\begin{align}\label{fint}
\int_{\eps_{2}}^{\infty}\frac{1}{s^{1+1/\alpha}}
F_{1}(s)\,\dd s
\le\frac{\alpha}{\eps_{2}^{1/\alpha}}\sup_{s\in(0,\infty)}F_{1}(s)
=:B_{3}<\infty.
\end{align}
Therefore, we have that
\begin{align}\label{b3}
I_{3}
\le\frac{t}{\alpha}\exp\Big[-\frac{\kappa}{2}\Big(\frac{\eps_{2}^{1/\alpha}}{t}\Big)^{{\theta}}\Big]B_{3},\quad\text{for sufficiently small }t.
\end{align}

We now work on the second integral in \eqref{integrals0}. Let $\delta>0$. It follows from \eqref{crush} and the fact that $r_{1}<\alpha$ in \eqref{r10} that we make take $\eps_{2}>0$ sufficiently small so that for all $t>0$ sufficiently small,
\begin{align}
I_{2}
& =\frac{t}{\alpha}\int_{t^{r_{1}}}^{\eps_{2}}\frac{1}{s^{1+1/\alpha}}
\exp\Big[-\kappa\Big(\frac{s^{1/\alpha}}{t}\Big)^{{\theta}}\Big]e^{g_{l}(t/s^{1/\alpha})}
e^{-C_{1}/s}e^{g_{F}(s)}\,\dd s\nonumber\\
& \le\frac{t}{\alpha}\int_{t^{r_{1}}}^{\eps_{2}}\frac{1}{s^{1+1/\alpha}}
\exp\Big[-(\kappa-\delta)\Big(\frac{s^{1/\alpha}}{t}\Big)^{{\theta}}\Big]
e^{-(C_{1}-\delta)/s}\,\dd s:=I_{2}^{+},\label{boundplus}
\end{align}
and similarly,
\begin{align}\label{boundminus}
I_{2}^{-}
:=\frac{t}{\alpha}\int_{t^{r_{1}}}^{\eps_{2}}\frac{1}{s^{1+1/\alpha}}
\exp\Big[-(\kappa+\delta)\Big(\frac{s^{1/\alpha}}{t}\Big)^{{\theta}}\Big]
e^{-(C_{1}+\delta)/s}\,\dd s
\le I_{2}.
\end{align}
Applying Lemma~\ref{exact} to $I_{2}^{\pm}$ and using the bounds \eqref{boundplus}-\eqref{boundminus} gives
\begin{align*}
&
-\frac{(C_{1}+\delta)}{\theta}(\alpha +{\theta} )  \left(\frac{\alpha  (C_{1}+\delta)}{(\kappa+\delta)  {\theta} }\right)^{-\frac{\alpha }{\alpha +{\theta} }}
=\lim_{t\to0+}t^{\alpha\theta/(\alpha+\theta)}\ln I_{2}^{-}\\
&\le \liminf_{t\to0+}t^{\alpha\theta/(\alpha+\theta)}\ln I_{2}
\le \limsup_{t\to0+}t^{\alpha\theta/(\alpha+\theta)}\ln I_{2}\\
&\le \lim_{t\to0+}t^{\alpha\theta/(\alpha+\theta)}\ln I_{2}^{+}
=-\frac{(C_{1}-\delta)}{\theta}(\alpha +{\theta} )  \left(\frac{\alpha  (C_{1}-\delta)}{(\kappa-\delta)  {\theta} }\right)^{-\frac{\alpha }{\alpha +{\theta} }}.
\end{align*}
Since $\delta>0$ is arbitrary, we have that
\begin{align*}
\lim_{t\to0+}t^{\alpha\theta/(\alpha+\theta)}\ln I_{2}
=-\frac{C_{1}}{\theta}(\alpha +{\theta} )  \left(\frac{\alpha  C_{1}}{\kappa {\theta} }\right)^{-\frac{\alpha }{\alpha +{\theta} }}.
\end{align*}
Using the bounds \eqref{b1} and \eqref{b3} and the choice of $r_{1}$ in \eqref{r10} completes the proof.
\end{proof}

\begin{proof}[Proof of Theorem~\ref{more}]

Let $\delta>0$. Again, we decompose $F_{\alpha}(t)$ into three integrals,
\begin{align}
F_{\alpha}(t)
&=\int_{0}^{t^{r_{1}}}q(s,t)F_{1}(s)\,\dd s
+\int_{t^{r_{1}}}^{\eps_{2}}q(s,t)F_{1}(s)\,\dd s
+\int_{\eps_{2}}^{\infty}q(s,t)F_{1}(s)\,\dd s\nonumber\\
&=:I_{1}+I_{2}+I_{3},\label{IS}
\end{align}
where $r_{1}>0$ satisfies \eqref{r10} and $\eps_{2}>0$ is sufficiently small so that
\begin{align}\label{e2}
\sqrt{1-\delta}
\le\frac{F_{1}(s)}{A_{1}s^{p_{1}}e^{-C_{1}/s}}
\le\sqrt{1+\delta}\quad\text{for all }s\in(0,\eps_{2}].
\end{align}
We can again choose $r_{1}$ satisfying \eqref{r10} since ${\theta}>0$ and $\alpha>0$, and we can choose $\eps_{2}$ satisfying \eqref{e2} by \eqref{asympF}.

We first bound the first integral in \eqref{IS}. Since $r_{1}>0$, the assumption in \eqref{asympF} implies that for $t$ sufficiently small,
\begin{align}\label{i1bound}
\begin{split}
I_{1}
&\le(1+\delta)\int_{0}^{t^{r_{1}}}q(s,t)A_{1}s^{p_{1}}e^{-C_{1}/s}\,\dd s\\
&\le(1+\delta)e^{-(C_{1}/2)/t^{r_{1}}}\int_{0}^{t^{r_{1}}}q(s,t)\,\dd s
\le(1+\delta)e^{-(C_{1}/2)/t^{r_{1}}}B_{1},
\end{split}
\end{align}
where $B_{1}$ is in \eqref{qint} and since $A_{1}s^{p_{1}}e^{-C_{1}/s}\le e^{-(C_{1}/2)/s}$ for all $s$ sufficiently small.

Moving to the third integral in \eqref{IS}, let $l_{0}(z):=Bz^{-{\xi}}e^{-\kappa/z^{{\theta}}}$ and notice that since $\eps_{2}>0$, \eqref{asympl} ensures that for sufficiently small $t>0$,
\begin{align}\label{i3bound}
\begin{split}
I_{3}
&\le(1+\delta)\int_{\eps_{2}}^{\infty}\frac{1}{\alpha}\frac{t}{s^{1+1/\alpha}}
B\Big(\frac{s^{1/\alpha}}{t}\Big)^{{\xi}}\exp\Big[-\kappa\Big(\frac{s^{1/\alpha}}{t}\Big)^{{\theta}}\Big]F_{1}(s)\,\dd s\\
&\le(1+\delta)\exp\Big[-\frac{\kappa}{2}\Big(\frac{\eps_{2}^{1/\alpha}}{t}\Big)^{{\theta}}\Big]t\int_{\eps_{2}}^{\infty}\frac{1}{\alpha}\frac{1}{s^{1+1/\alpha}}
BF_{1}(s)\,\dd s\\
&\le(1+\delta)\exp\Big[-\frac{\kappa}{2}\Big(\frac{\eps_{2}^{1/\alpha}}{t}\Big)^{{\theta}}\Big]tBB_{3},
\end{split}
\end{align}
where $B_{3}$ is in \eqref{fint}.

We now analyze the second integral in \eqref{IS}. Notice that
\begin{align*}
\frac{t}{\eps_{2}^{1/\alpha}}
\le \frac{t}{s^{1/\alpha}}
\le \frac{t}{t^{r_{1}/\alpha}}
= t^{1-r_{1}/\alpha}
\quad\text{for all }s\in[t^{r_{1}},\eps_{2}].
\end{align*}
Since $r_{1}<\alpha$ by \eqref{r10}, we may take $t$ sufficiently small so that
\begin{align}\label{e99}
\sqrt{1-\delta}
\le \frac{l\big(\frac{t}{s^{1/\alpha}}\big)}{l_{0}\big(\frac{t}{s^{1/\alpha}}\big)}
\le \sqrt{1+\delta}
\quad\text{for all }s\in[t^{r_{1}},\eps_{2}].
\end{align}
Therefore, for sufficiently small $t>0$, we have by \eqref{e2} and \eqref{e99} that
\begin{align*}
&I_{2}^{-}
:=(1-\delta)\int_{t^{r_{1}}}^{\eps_{2}}\frac{t}{\alpha s^{1+1/\alpha}}l_{0}\Big(\frac{t}{s^{1/\alpha}}\Big)A_{1}s^{p_{1}}e^{-C_{1}/s}(s)\,\dd s\\
&\le I_{2}
\le (1+\delta)\int_{t^{r_{1}}}^{\eps_{2}}\frac{t}{\alpha s^{1+1/\alpha}}l_{0}\Big(\frac{t}{s^{1/\alpha}}\Big)A_{1}s^{p_{1}}e^{-C_{1}/s}(s)\,\dd s=:I_{2}^{+}.
\end{align*}
Applying Lemma~\ref{exact} to $I_{2}^{\pm}$ yields
\begin{align*}
1-\delta
\le \liminf_{t\to0+}\frac{I_{2}}{At^{p}e^{-C/t^{\beta}}}
\le \limsup_{t\to0+}\frac{I_{2}}{At^{p}e^{-C/t^{\beta}}}
\le 1+\delta,
\end{align*}
where $\beta$, $C$, $A$, and $p$ are in \eqref{bcdef} and \eqref{complicated2}. Using that $\delta>0$ is arbitrary and using the bounds in \eqref{i1bound} and \eqref{i3bound} completes the proof.
\end{proof}



\begin{proof}[Proof of Corollary~\ref{corlog}]
The result follows from Theorems~\ref{log}-\ref{more} with setting $F_{1}(s)=\P(\sigma\le s)$ and $l(z)=l_{\alpha}(z)$, the asymptotic behavior in \eqref{l}, the relations in \eqref{relatime}-\eqref{taurep}, and the values of $\theta$, $\kappa$, $B$, and $\xi$ in \eqref{ldef}.
\end{proof}

\begin{proof}[Proof of Corollary~\ref{corv}]
The result follows from Theorem~\ref{log} with setting $F_{1}(s)=p_{1}(x,s\,|\,x_{0},0)$ and $l(z)=l_{\alpha}(z)$, the asymptotic behavior in \eqref{l}, Varadhan's formula in \eqref{vv}, the relation \eqref{prep}, and the values of $\theta$ and $\kappa$ in \eqref{ldef}.
\end{proof}

\begin{proof}[Proof of Corollary~\ref{corg}]
The result follows from Theorems~\ref{log}-\ref{more} with setting $F_{1}(s)=p_{1}(x,s\,|\,x_{0},0)$ and $l(z)=l_{\alpha}(z)$, the asymptotic behavior in \eqref{l}, the relation \eqref{prep}, and the values of $\theta$, $\kappa$, $B$, and $\xi$ in \eqref{ldef}.
\end{proof}


\begin{proof}[Proof of Theorem~\ref{uni}]
The result follows from Theorem~1 in \cite{lawley2020uni} and a change of variables, $t\to t^{1/\beta}$.
\end{proof}


\begin{proof}[Proof of Corollary~\ref{corcv}]
The elementary proof is similar to the proof of Corollary 2 in \cite{lawley2020mortal}. By Theorem~\ref{uni}, we have that
\begin{align}\label{varconv}
\begin{split}
(\ln N)^{2/\beta}\textup{Variance}[T_{k,N}]
&=(\ln N)^{2/\beta}\big(\E[T_{k,N}^{2}]-(\E[T_{k,N}])^{2}\big)\\
&=\frac{\E[T_{k,N}^{2}]}{(\ln N)^{-2/\beta}}-\frac{(\E[T_{k,N}])^{2}}{(\ln N)^{-2/\beta}}
\to0\quad\text{as }N\to\infty.
\end{split}
\end{align}
By Theorem~\ref{uni}, we have that for large $N$,
\begin{align}\label{ee4}
\frac{\E[T_{k,N}]}{(C/\ln N)^{1/\beta}}\ge\frac{1}{2}.
\end{align}
Let $\eps>0$. By \eqref{varconv}, we have that for large $N$,
\begin{align}\label{ee5}
(\ln N)^{1/\beta}\sqrt{\textup{Variance}[T_{k,N}]}<\eps.
\end{align}
Therefore, combining \eqref{ee4} and \eqref{ee5} gives that for large $N$,
\begin{align*}
\frac{\sqrt{\textup{Variance}[T_{k,N}]}}{\E[T_{k,N}]}
\le\frac{2\eps}{C^{1/\beta}}.
\end{align*}
Since $\eps>0$ is arbitrary, the proof is complete.
\end{proof}

\begin{proof}[Proof of Theorem~\ref{nice}]
Let $p\in(0,1)$ and observe that changing variables $z=ts^{-1/\alpha}$ yields
\begin{align}
\P(U_{\alpha}({\sigma})>t)
&=\int_{0}^{\infty}\frac{1}{\alpha}\frac{t}{s^{1+1/\alpha}}l_{\alpha}(ts^{-1/\alpha})\P({\sigma}>s)\,\dd s\nonumber\\
&=\int_{0}^{t^{p}}l_{\alpha}(z)\P({\sigma}>(t/z)^{\alpha})\,\dd z
+\int_{t^{p}}^{\infty}l_{\alpha}(z)\P({\sigma}>(t/z)^{\alpha})\,\dd z.\label{twoint}
\end{align}
Using that $\P({\sigma}>(t/z)^{\alpha})$ is an increasing function of $z$, that $l_{\alpha}(z)$ is a probability density, and the assumption in \eqref{asn}, we obtain
\begin{align}
\begin{split}\label{first09}
\int_{0}^{t^{p}}l_{\alpha}(z)\P({\sigma}>(t/z)^{\alpha})\,\dd z
\le\P({\sigma}>(t^{1-p})^{\alpha})\int_{0}^{t^{p}}l_{\alpha}(z)\,\dd z
\le\P({\sigma}>(t^{1-p})^{\alpha})\\
=\O(t^{-(1-p)\alpha r})\quad\text{as }t\to\infty.
\end{split}
\end{align}
Now, it is well-known that \cite{barkai2001}
\begin{align*}
l_{\alpha}(z)
\sim\frac{\alpha}{\Gamma(1-\alpha)}z^{-(1+\alpha)}\quad\text{as }z\to\infty.
\end{align*}
Therefore, we obtain the following bound on the asymptotic behavior of the second integral in \eqref{twoint},
\begin{align}\label{first10}
\int_{t^{p}}^{\infty}l_{\alpha}(z)\P({\sigma}>(t/z)^{\alpha})\,\dd z
\le\int_{t^{p}}^{\infty}l_{\alpha}(z)\,\dd z
=\O(t^{-p\alpha})\quad\text{as }t\to\infty.
\end{align}
Combining \eqref{twoint} with \eqref{first09} and \eqref{first10}, we obtain that there exists a constant $B_{0}$ so that
\begin{align*}
\P(U_{\alpha}({\sigma})>t)
\le B_{0}(t^{-(1-p)\alpha r}+t^{-p\alpha})\quad\text{sufficiently large }t>0.
\end{align*}
Setting $p=r/(1+r)$ yields \eqref{fr72}.

Next, if $N>\frac{1+r}{\alpha r}$, then using the definition of expectation and \eqref{fr72} yields
\begin{align*}
\E[T_{N}]
=\int_{0}^{\infty}(\P(U_{\alpha}({\sigma})>t))^{N}\,\dd t
<\infty,
\end{align*}
which yields \eqref{fr73}.
\end{proof}

\begin{proof}[Proof of Theorem~\ref{gumbel}]
The proof is similar to the proof of Proposition 3 and Theorems 1 and 2 in \cite{lawley2020dist}. Define
\begin{align}\label{s0}
S_{0}(t):=1-At^{p}e^{-C/t^{\beta}}
\end{align}
It is straightforward to check that
\begin{align*}
\lim_{t\to0+}\frac{\dd}{\dd t}\left[\frac{1-S_{0}(t)}{S_{0}'(t)}\right]
=0.
\end{align*}
Hence, Theorem 2.1.2 in \cite{falkbook} implies
\begin{align}\label{el}
\lim_{N\to\infty}(S_{0}(a_{N}x+b_{N}))^{N}
=\exp(-e^{x}),\quad\text{for all }x\in\R,
\end{align}
for some rescalings $a_{N}$ and $b_{N}$. Remark 1.1.9 in \cite{haanbook} yields that the following rescalings satisfy \eqref{el},
\begin{align}\label{ab}
a_{N}
&:=\frac{-1}{NS_{0}'(b_{N})}>0,\quad
b_{N}
:=S_{0}^{-1}(1-1/N)>0,\quad N\ge1.
\end{align}
Upon using the definition of $S_{0}$ in \eqref{s0} and properties of the LambertW function \cite{corless1996}, we obtain that the values in \eqref{ab} reduce to \eqref{abab}.

It is immediate that \eqref{el} is equivalent to
\begin{align*}
\lim_{N\to\infty}N
\ln(S_{0}(a_{N}x+b_{N}))
=-e^{x},\quad\text{for all }x\in\R.
\end{align*}
Therefore, $S_{0}(a_{N}x+b_{N})\to1$ as $N\to\infty$. Hence, L'Hospital's rule implies that
\begin{align*}
-\ln(S_{0}(a_{N}x+b_{N}))
\sim 1-S_{0}(a_{N}x+b_{N})\quad\text{as }N\to\infty.
\end{align*}
We thus conclude that \eqref{el} is equivalent to
\begin{align}\label{repl}
\lim_{N\to\infty}N(1-S_{0}(a_{N}x+b_{N}))
=e^{x},\quad\text{for all }x\in\R.
\end{align}
By assumption, $\P(\tau\le t)\sim1-S_{0}(t)$ as $t\to0+$. Therefore, \eqref{repl} holds with $S_{0}$ replaced by $S(t):=\P(\tau>t)$, and therefore \eqref{el} holds with $S_{0}$ replaced by $S$. Upon recalling the definition of convergence in distribution in \eqref{cddef00}, we conclude that the convergence in distribution in \eqref{cd} is proved for the rescalings in \eqref{abab}.

Finally, if the convergence in distribution in \eqref{cd} holds for some $\{a_{N}\}_{N\ge1}$ and $\{b_{N}\}_{N\ge1}$, then it also holds for any sequences $\{a_{N}'\}_{N\ge1}$ and $\{b_{N}'\}_{N\ge1}$ that satisfy \cite{peng2012}
\begin{align}\label{cc1}
\lim_{N\to\infty}\frac{a_{N}'}{a_{N}}
=1,\quad
\lim_{N\to\infty}\frac{b_{N}'-b_{N}}{a_{N}}
=0.
\end{align}
Hence, basic properties of the LambertW function \cite{corless1996} yield \eqref{ababuf}.
\end{proof}

\begin{proof}[Proof of Theorem~\ref{kth}]
The result follows immediately from Theorem~\ref{gumbel} above and Theorem 3.5 in \cite{colesbook}.
\end{proof}

\begin{proof}[Proof of Theorem~\ref{moments}]
The proof is similar to the proofs of Theorems~3 and 5 in \cite{lawley2020dist}.
\end{proof}

\subsection{Short-time behavior of drift-diffusion}

For the drift-diffusion process in Section~\ref{pearson}, it is well-known \cite{feller1968} that the probability density of ${\sigma}$ is $f(s):=\tfrac{\dd}{\dd s}\P(\sigma\le s)=f_{0}(s)+f_{1}(s)$, where
\begin{align}\label{f0}
f_{0}(s)
=e^{-vy}e^{-(v^{2}/2)\overline{s}}\frac{1}{\sqrt{2\pi \overline{s}^{3}}}\sum_{k=-\infty}^{\infty}(y+2k)\exp\Big(-\frac{2}{\overline{s}}(y/2+k)^{2}\Big),
\end{align}
with $y=x_{0}/{L_{0}}$, $\overline{s}=(2K_{\alpha}/{L_{0}}^{2})s$, and $v=({L_{0}}/(2K_{\alpha})V_{\alpha}$, and the formula for $f_{1}$ is obtained from \eqref{f0} and replacing $v$ by $-v$ and $y$ by $1-y$. Therefore, $f(s)$ has the short-time behavior,
\begin{align}\label{fas}
f(s)
\sim
\begin{cases}
e^{-vy}\frac{y}{\sqrt{2\pi \overline{s}^{3}}}\exp (-\frac{y^{2}}{2\overline{s}} ) & \text{if }y<1/2,\\
e^{v(1-y)}\frac{1-y}{\sqrt{2\pi \overline{s}^{3}}}\exp (-\frac{(1-y)^{2}}{2\overline{s}} ) & \text{if }y>1/2,\\
(e^{-vy}+e^{v(1-y)})\frac{y}{\sqrt{2\pi \overline{s}^{3}}}\exp (-\frac{y^{2}}{2\overline{s}} ) & \text{if }y=1/2.
\end{cases}
\end{align}
Taking the Laplace transform of \eqref{fas}, dividing by the Laplace transform variable, and then taking the inverse Laplace transform yields
\begin{align*}
\P({\sigma}\le s)
\sim A s^{1/2}\exp\Big(\frac{d_{0}^{2}}{4K_{\alpha}s}\Big)\quad\text{as }s\to0+,
\end{align*}
where $d_{0}:=\min\{x_{0},{L_{0}}-x_{0}\}>0$ and 
\begin{align*}
A
=\begin{cases}
\sqrt{\frac{2K_{\alpha}}{{L_{0}}^{2}}}\frac{2{L_{0}}^{2}}{d_{0}^{2}}
\exp(-\frac{V_{\alpha}}{2K_{\alpha}}x_{0})\frac{x_{0}/{L_{0}}}{\sqrt{2\pi }} & \text{if }x_{0}<{L_{0}}/2,\\
\sqrt{\frac{2K_{\alpha}}{{L_{0}}^{2}}}\frac{2{L_{0}}^{2}}{d_{0}^{2}}
\exp(\frac{V_{\alpha}}{2K_{\alpha}}({L_{0}}-x_{0}))\frac{1-x_{0}/{L_{0}}}{\sqrt{2\pi }} & \text{if }x_{0}>{L_{0}}/2,\\
\sqrt{\frac{2K_{\alpha}}{{L_{0}}^{2}}}\frac{2{L_{0}}^{2}}{d_{0}^{2}}
(\exp(-\frac{V_{\alpha}}{2K_{\alpha}}x_{0})+\exp(\frac{V_{\alpha}}{2K_{\alpha}}({L_{0}}-x_{0})))\frac{x_{0}/{L_{0}}}{\sqrt{2\pi }} & \text{if }x_{0}={L_{0}}/2.
\end{cases}
\end{align*}


\bibliography{library.bib}

\begin{thebibliography}{10}

\bibitem{oliveira2019}
Fernando~A Oliveira, Rogelma Ferreira, Luciano~C Lapas, and Mendeli~H
  Vainstein.
\newblock Anomalous diffusion: A basic mechanism for the evolution of
  inhomogeneous systems.
\newblock {\em arXiv preprint arXiv:1902.03157}, 2019.

\bibitem{klafter2005}
Joseph Klafter and Igor~M Sokolov.
\newblock Anomalous diffusion spreads its wings.
\newblock {\em Physics world}, 18(8):29, 2005.

\bibitem{hofling2013}
Felix H{\"o}fling and Thomas Franosch.
\newblock Anomalous transport in the crowded world of biological cells.
\newblock {\em Reports on Progress in Physics}, 76(4):046602, 2013.

\bibitem{barkai2012}
Eli Barkai, Yuval Garini, and Ralf Metzler.
\newblock Strange kinetics of single molecules in living cells.
\newblock {\em Phys. Today}, 65(8):29, 2012.

\bibitem{sokolov2012}
Igor~M Sokolov.
\newblock Models of anomalous diffusion in crowded environments.
\newblock {\em Soft Matter}, 8(35):9043--9052, 2012.

\bibitem{meroz2015}
Yasmine Meroz and Igor~M Sokolov.
\newblock A toolbox for determining subdiffusive mechanisms.
\newblock {\em Physics Reports}, 573:1--29, 2015.

\bibitem{metzler1999}
Ralf Metzler, Eli Barkai, and Joseph Klafter.
\newblock Anomalous diffusion and relaxation close to thermal equilibrium: A
  fractional {Fokker-Planck} equation approach.
\newblock {\em Physical review letters}, 82(18):3563, 1999.

\bibitem{magdziarz2009}
Marcin Magdziarz, Aleksander Weron, Krzysztof Burnecki, and Joseph Klafter.
\newblock Fractional brownian motion versus the continuous-time random walk: A
  simple test for subdiffusive dynamics.
\newblock {\em Physical review letters}, 103(18):180602, 2009.

\bibitem{mckinley2018}
Scott~A McKinley and Hung~D Nguyen.
\newblock Anomalous diffusion and the generalized langevin equation.
\newblock {\em SIAM Journal on Mathematical Analysis}, 50(5):5119--5160, 2018.

\bibitem{scher1975}
Harvey Scher and Elliott~W Montroll.
\newblock Anomalous transit-time dispersion in amorphous solids.
\newblock {\em Physical Review B}, 12(6):2455, 1975.

\bibitem{berkowitz2002}
Brian Berkowitz, Joseph Klafter, Ralf Metzler, and Harvey Scher.
\newblock Physical pictures of transport in heterogeneous media:
  Advection-dispersion, random-walk, and fractional derivative formulations.
\newblock {\em Water Resources Research}, 38(10):9--1, 2002.

\bibitem{amblard1996}
Fran{\c{c}}ois Amblard, Anthony~C Maggs, Bernard Yurke, Andrew~N Pargellis, and
  Stanislas Leibler.
\newblock Subdiffusion and anomalous local viscoelasticity in actin networks.
\newblock {\em Physical review letters}, 77(21):4470, 1996.

\bibitem{golding2006}
Ido Golding and Edward~C Cox.
\newblock Physical nature of bacterial cytoplasm.
\newblock {\em Physical review letters}, 96(9):098102, 2006.

\bibitem{isaacson2011}
SA~Isaacson, DM~McQueen, and CS~Peskin.
\newblock The influence of volume exclusion by chromatin on the time required
  to find specific {DNA} binding sites by diffusion.
\newblock {\em Proc Natl Acad Sci}, 108(9):3815--3820, 2011.

\bibitem{woringer2014}
M~Woringer, X~Darzacq, and I~Izeddin.
\newblock Geometry of the nucleus: a perspective on gene expression regulation.
\newblock {\em Curr Opin Chem Biol}, 20:112--119, 2014.

\bibitem{ma2020}
Jingwei Ma, Myan Do, Mark~A Le~Gros, Charles~S Peskin, Carolyn~A Larabell,
  Yoichiro Mori, and Samuel~A Isaacson.
\newblock Strong intracellular signal inactivation produces sharper and more
  robust signaling from cell membrane to nucleus.
\newblock {\em bioRxiv}, 2020.

\bibitem{redner2001}
Sidney Redner.
\newblock {\em A guide to first-passage processes}.
\newblock Cambridge University Press, 2001.

\bibitem{benichou2008}
O~B{\'e}nichou and R~Voituriez.
\newblock Narrow-escape time problem: Time needed for a particle to exit a
  confining domain through a small window.
\newblock {\em Phys Rev Lett}, 100(16):168105, 2008.

\bibitem{ward10}
S.~Pillay, M.~J. Ward, A.~Peirce, and T.~Kolokolnikov.
\newblock {An asymptotic analysis of the mean first passage time for narrow
  escape problems: Part I: Two-dimensional domains}.
\newblock {\em Multiscale Model Simul.}, 8(3):803--835, 2010.

\bibitem{ward10b}
A.~F. Cheviakov, M.~J. Ward, and R.~Straube.
\newblock {An asymptotic analysis of the mean first passage time for narrow
  escape problems: Part II: The sphere}.
\newblock {\em Multiscale Model Simul.}, 8(3):836--870, 2010.

\bibitem{holcman2014}
D~Holcman and Z~Schuss.
\newblock The narrow escape problem.
\newblock {\em {SIAM} Rev}, 56(2):213--257, 2014.

\bibitem{holcman2014time}
D~Holcman and Z~Schuss.
\newblock Time scale of diffusion in molecular and cellular biology.
\newblock {\em J Phys A}, 47(17):173001, 2014.

\bibitem{grebenkov2017}
D~S Grebenkov and G~Oshanin.
\newblock Diffusive escape through a narrow opening: new insights into a
  classic problem.
\newblock {\em Phys Chem Chem Phys}, 19(4):2723--2739, 2017.

\bibitem{lua2005}
Rhonald~C Lua and Alexander~Y Grosberg.
\newblock First passage times and asymmetry of dna translocation.
\newblock {\em Physical Review E}, 72(6):061918, 2005.

\bibitem{yuste2007}
Santos~Bravo Yuste and Katja Lindenberg.
\newblock Subdiffusive target problem: survival probability.
\newblock {\em Physical Review E}, 76(5):051114, 2007.

\bibitem{condamin2007}
S~Condamin, O~B{\'e}nichou, and J~Klafter.
\newblock First-passage time distributions for subdiffusion in confined
  geometry.
\newblock {\em Physical review letters}, 98(25):250602, 2007.

\bibitem{condamin2008}
S~Condamin, Vincent Tejedor, Rapha{\"e}l Voituriez, Olivier B{\'e}nichou, and
  Joseph Klafter.
\newblock Probing microscopic origins of confined subdiffusion by first-passage
  observables.
\newblock {\em Proceedings of the National Academy of Sciences},
  105(15):5675--5680, 2008.

\bibitem{grebenkov2010}
Denis~S Grebenkov.
\newblock Subdiffusion in a bounded domain with a partially
  absorbing-reflecting boundary.
\newblock {\em Physical review E}, 81(2):021128, 2010.

\bibitem{lawley2020esp}
S~D Lawley and J~B Madrid.
\newblock A probabilistic approach to extreme statistics of {B}rownian escape
  times in dimensions 1, 2, and 3.
\newblock {\em Journal of Nonlinear Science}, pages 1--21, 2020.

\bibitem{lawley2020uni}
S~D Lawley.
\newblock Universal formula for extreme first passage statistics of diffusion.
\newblock {\em Phys Rev E}, 101(1):012413, 2020.

\bibitem{lawley2020dist}
S~D Lawley.
\newblock Distribution of extreme first passage times of diffusion.
\newblock {\em Journal of Mathematical Biology}, 2020.

\bibitem{basnayake2019}
K~Basnayake, Z~Schuss, and D~Holcman.
\newblock Asymptotic formulas for extreme statistics of escape times in 1, 2
  and 3-dimensions.
\newblock {\em J Nonlinear Sci}, 29(2):461--499, 2019.

\bibitem{schuss2019}
Z.~Schuss, K.~Basnayake, and D.~Holcman.
\newblock Redundancy principle and the role of extreme statistics in molecular
  and cellular biology.
\newblock {\em Physics of Life Reviews}, January 2019.

\bibitem{coombs2019}
D~Coombs.
\newblock First among equals: Comment on ``{R}edundancy principle and the role
  of extreme statistics in molecular and cellular biology'' by {Z. Schuss, K.
  Basnayake and D. Holcman}.
\newblock {\em Physics of life reviews}, 28:92--93, 2019.

\bibitem{redner2019}
S~Redner and B~Meerson.
\newblock Redundancy, extreme statistics and geometrical optics of brownian
  motion. comment on ``{R}edundancy principle and the role of extreme
  statistics in molecular and cellular biology'' by {Z. Schuss et al.}
\newblock {\em Physics of life reviews}, 28:80--82, 2019.

\bibitem{sokolov2019}
I~M Sokolov.
\newblock Extreme fluctuation dominance in biology: On the usefulness of
  wastefulness: Comment on ``{R}edundancy principle and the role of extreme
  statistics in molecular and cellular biology'' by {Z. Schuss, K. Basnayake
  and D. Holcman.}
\newblock {\em Physics of life reviews}, 2019.

\bibitem{rusakov2019}
D~A Rusakov and L~P Savtchenko.
\newblock Extreme statistics may govern avalanche-type biological reactions:
  Comment on ``{R}edundancy principle and the role of extreme statistics in
  molecular and cellular biology'' by {Z. Schuss, K. Basnayake, D. Holcman}.
\newblock {\em Physics of life reviews}, 2019.

\bibitem{martyushev2019}
L~M Martyushev.
\newblock Minimal time, weibull distribution and maximum entropy production
  principle. comment on ``{R}edundancy principle and the role of extreme
  statistics in molecular and cellular biology'' by {Z. Schuss et al.}
\newblock {\em Physics of life reviews}, 28:83--84, 2019.

\bibitem{tamm2019}
M~V Tamm.
\newblock Importance of extreme value statistics in biophysical contexts:
  Comment on ``{R}edundancy principle and the role of extreme statistics in
  molecular and cellular biology.''.
\newblock {\em Physics of life reviews}, 2019.

\bibitem{basnayake2019c}
Kanishka Basnayake and David Holcman.
\newblock Fastest among equals: a novel paradigm in biology. reply to comments:
  Redundancy principle and the role of extreme statistics in molecular and
  cellular biology.
\newblock {\em Physics of life reviews}, 28:96--99, 2019.

\bibitem{godec2016x}
A~Godec and R~Metzler.
\newblock Universal proximity effect in target search kinetics in the
  few-encounter limit.
\newblock {\em Phys Rev X}, 6(4):041037, 2016.

\bibitem{hartich2018}
D~Hartich and A~Godec.
\newblock Duality between relaxation and first passage in reversible markov
  dynamics: rugged energy landscapes disentangled.
\newblock {\em New J Phys}, 20(11):112002, 2018.

\bibitem{hartich2019}
D~Hartich and A~Godec.
\newblock Extreme value statistics of ergodic markov processes from first
  passage times in the large deviation limit.
\newblock {\em J Phys A}, 52(24):244001, 2019.

\bibitem{harbison2004}
Christopher~T Harbison, D~Benjamin Gordon, Tong~Ihn Lee, Nicola~J Rinaldi,
  Kenzie~D Macisaac, Timothy~W Danford, Nancy~M Hannett, Jean-Bosco Tagne,
  David~B Reynolds, Jane Yoo, et~al.
\newblock Transcriptional regulatory code of a eukaryotic genome.
\newblock {\em Nature}, 431(7004):99--104, 2004.

\bibitem{meerson2015}
B~Meerson and S~Redner.
\newblock Mortality, redundancy, and diversity in stochastic search.
\newblock {\em Phys Rev Lett}, 114(19):198101, 2015.

\bibitem{schneider1989}
Walter~R Schneider and Walter Wyss.
\newblock Fractional diffusion and wave equations.
\newblock {\em Journal of Mathematical Physics}, 30(1):134--144, 1989.

\bibitem{samko1993}
Stefan~G Samko, Anatoly~A Kilbas, Oleg~I Marichev, et~al.
\newblock {\em Fractional integrals and derivatives}, volume~1.
\newblock Gordon and Breach Science Publishers, Yverdon Yverdon-les-Bains,
  Switzerland, 1993.

\bibitem{magdziarz2016}
Marcin Magdziarz and Tomasz Zorawik.
\newblock Stochastic representation of a fractional subdiffusion equation. the
  case of infinitely divisible waiting times, l{\'e}vy noise and
  space-time-dependent coefficients.
\newblock {\em Proceedings of the American Mathematical Society},
  144(4):1767--1778, 2016.

\bibitem{yuste2004}
SB~Yuste and Katja Lindenberg.
\newblock Comment on ``{M}ean first passage time for anomalous diffusion''.
\newblock {\em Physical Review E}, 69(3):033101, 2004.

\bibitem{varadhan1967}
Sathamangalam R~Srinivasa Varadhan.
\newblock Diffusion processes in a small time interval.
\newblock {\em Commun Pure Appl Math}, 20(4):659--685, 1967.

\bibitem{umarov2016}
Sabir Umarov.
\newblock Fractional fokker-planck-kolmogorov equations associated with
  stochastic differential equations in a bounded domain.
\newblock {\em arXiv preprint arXiv:1610.08100}, 2016.

\bibitem{magdziarz2007}
Marcin Magdziarz, Aleksander Weron, and Karina Weron.
\newblock Fractional fokker-planck dynamics: Stochastic representation and
  computer simulation.
\newblock {\em Physical Review E}, 75(1):016708, 2007.

\bibitem{meerschaert2002}
Mark~M Meerschaert, David~A Benson, Hans-Peter Scheffler, and Boris Baeumer.
\newblock Stochastic solution of space-time fractional diffusion equations.
\newblock {\em Physical Review E}, 65(4):041103, 2002.

\bibitem{janicki1993}
Aleksand Janicki and Aleksander Weron.
\newblock {\em Simulation and chaotic behavior of alpha-stable stochastic
  processes}, volume 178.
\newblock CRC Press, 1993.

\bibitem{sato1999}
Ken-iti Sato, Sato Ken-Iti, and A~Katok.
\newblock {\em L{\'e}vy processes and infinitely divisible distributions}.
\newblock Cambridge university press, 1999.

\bibitem{carnaffan2017}
Sean Carnaffan and Reiichiro Kawai.
\newblock Solving multidimensional fractional {F}okker--{P}lanck equations via
  unbiased density formulas for anomalous diffusion processes.
\newblock {\em SIAM Journal on Scientific Computing}, 39(5):B886--B915, 2017.

\bibitem{penson2010}
KA~Penson and K~G{\'o}rska.
\newblock Exact and explicit probability densities for one-sided l{\'e}vy
  stable distributions.
\newblock {\em Physical review letters}, 105(21):210604, 2010.

\bibitem{kosztolowicz2004}
Tadeusz Koszto{\l}owicz.
\newblock From the solutions of diffusion equation to the solutions of
  subdiffusive one.
\newblock {\em Journal of Physics A: Mathematical and General}, 37(45):10779,
  2004.

\bibitem{schneider1986}
WR~Schneider.
\newblock Stable distributions: Fox function representation and generalization.
\newblock In {\em Stochastic processes in classical and quantum systems}, pages
  497--511. Springer, 1986.

\bibitem{barkai2001}
E~Barkai.
\newblock Fractional fokker-planck equation, solution, and application.
\newblock {\em Physical Review E}, 63(4):046118, 2001.

\bibitem{norris1997}
James~R Norris.
\newblock Heat kernel asymptotics and the distance function in lipschitz
  riemannian manifolds.
\newblock {\em Acta Mathematica}, 179(1):79--103, 1997.

\bibitem{billingsley2013}
P~Billingsley.
\newblock {\em Convergence of probability measures}.
\newblock John Wiley \& Sons, 2013.

\bibitem{corless1996}
RM~Corless, GH~Gonnet, DEG Hare, DJ~Jeffrey, and DE~Knuth.
\newblock On the {LambertW} function.
\newblock {\em Advances in Computational mathematics}, 5(1):329--359, 1996.

\bibitem{carslaw1959}
Horatio~Scott Carslaw and John~Conrad Jaeger.
\newblock {\em Conduction of heat in solids}.
\newblock Oxford: Clarendon Press, 2 edition, 1959.

\bibitem{mathematica}
Wolfram Research.
\newblock Mathematica 12.0, 2019.

\bibitem{grebenkov2006}
D~S Grebenkov.
\newblock Partially reflected brownian motion: a stochastic approach to
  transport phenomena.
\newblock {\em Focus on probability theory}, pages 135--169, 2006.

\bibitem{seki2003}
Kazuhiko Seki, Mariusz Wojcik, and M~Tachiya.
\newblock Fractional reaction-diffusion equation.
\newblock {\em The Journal of chemical physics}, 119(4):2165--2170, 2003.

\bibitem{eaves2008}
Joel~D Eaves and David~R Reichman.
\newblock The subdiffusive targeting problem.
\newblock {\em The Journal of Physical Chemistry B}, 112(14):4283--4289, 2008.

\bibitem{grebenkov2019}
D~S Grebenkov.
\newblock Time-averaged {MSD} for switching diffusion.
\newblock {\em arXiv preprint arXiv:1903.04783}, 2019.

\bibitem{guigas2008}
Gernot Guigas and Matthias Weiss.
\newblock Sampling the cell with anomalous diffusion--the discovery of
  slowness.
\newblock {\em Biophysical journal}, 94(1):90--94, 2008.

\bibitem{metzler2000}
Ralf Metzler and Joseph Klafter.
\newblock The random walk's guide to anomalous diffusion: a fractional dynamics
  approach.
\newblock {\em Physics reports}, 339(1):1--77, 2000.

\bibitem{grebenkov2010jcp}
Denis~S Grebenkov.
\newblock Searching for partially reactive sites: Analytical results for
  spherical targets.
\newblock {\em The Journal of chemical physics}, 132(3):01B608, 2010.

\bibitem{lawley2020comp}
Jacob~B Madrid and Sean~D Lawley.
\newblock Competition between slow and fast regimes for extreme first passage
  times of diffusion.
\newblock {\em Journal of Physics A: Mathematical and Theoretical}, 2020.

\bibitem{lawley2020mortal}
Sean~D Lawley.
\newblock The effects of fast inactivation on conditional first passage times
  of mortal diffusive searchers.
\newblock {\em arXiv preprint arXiv:2003.05515}, 2020.

\bibitem{falkbook}
M~Falk, J~H{\"u}sler, and RD~Reiss.
\newblock {\em Laws of small numbers: extremes and rare events}.
\newblock Springer Science \& Business Media, 2010.

\bibitem{haanbook}
L~De~Haan and A~Ferreira.
\newblock {\em Extreme value theory: an introduction}.
\newblock Springer Science \& Business Media, 2007.

\bibitem{peng2012}
Z~Peng and S~Nadarajah.
\newblock Convergence rates for the moments of extremes.
\newblock {\em Bulletin of the Korean Mathematical Society}, 49(3):495--510,
  2012.

\bibitem{colesbook}
S~Coles.
\newblock {\em An introduction to statistical modeling of extreme values},
  volume 208.
\newblock Springer, 2001.

\bibitem{feller1968}
William Feller.
\newblock {\em An introduction to probability theory and its applications:
  {V}olume {I}}.
\newblock John Wiley \& Sons New York, 3 edition, 1968.

\end{thebibliography}
\bibliographystyle{unsrt}

\end{document}